\newcommand{\map}[1]{\xrightarrow{#1}}
\newcommand{\iso}{\cong}
\newcommand{\Gal}{\mathrm{Gal}}
\newcommand{\Hom}{\mathrm{Hom}}
\newcommand{\Aut}{\mathrm{Aut}}
\newcommand{\End}{\mathrm{End}}
\newcommand{\Spec}{\mathrm{Spec}}
\newcommand{\define}{\stackrel{\mathrm{def}}{=}}
\newcommand{\Q}{\mathbb Q}
\newcommand{\Z}{\mathbb Z}
\newcommand{\R}{\mathbb R}
\newcommand{\C}{\mathbb C}
\newcommand{\F}{\mathbb F}
\newcommand{\A}{\mathbb A}
\newcommand{\co}{\mathcal O}
\newcommand{\alg}{\mathrm{alg}}
\newcommand{\ord}{\mathrm{ord}}
\newcommand{\Lie}{\mathrm{Lie}}
\newcommand{\Falt}{\mathrm{Falt}}
\newcommand{\Colmez}{\mathrm{Col}}
\newcommand{\GSpin}{\mathrm{GSpin}}
\newcommand{\SO}{\mathrm{SO}}
\newcommand{\GSp}{\mathrm{GSp}}
\newcommand{\GL}{\mathrm{GL}}
\newcommand{\SL}{\mathrm{SL}}
\newcommand{\action}{\bullet}
\newcommand{\kk}{{\bm{k}}}
\newcommand{\dR}{{dR}}
\newcommand{\crys}{{crys}}
\newcommand{\Pic}{\mathrm{Pic}}
\newcommand{\weil}{\omega}
\newcommand{\taut}{\bm{\omega}}
\newcommand{\beef}{\diamond}
\begin{document}
\author{Benjamin Howard}
\thanks{}
\title{On the averaged Colmez conjecture}
\thanks{This research was supported in part by NSF grants  DMS-1201480 and DMS-1501583.}

\begin{abstract}
This is an expository article on the averaged version of Colmez's conjecture, relating  Faltings heights of CM abelian varieties to Artin $L$-functions. 
It is based on the author's lectures at the Current Developments in Mathematics conference held at Harvard  in 2018.
\end{abstract}

\maketitle

\setcounter{tocdepth}{1}
\tableofcontents

\theoremstyle{plain}
\newtheorem{theorem}{Theorem}[subsection]
\newtheorem{bigtheorem}{Theorem}[section]
\newtheorem{proposition}[theorem]{Proposition}
\newtheorem{lemma}[theorem]{Lemma}
\newtheorem{corollary}[theorem]{Corollary}
\newtheorem{conjecture}[theorem]{Conjecture}

\theoremstyle{definition}
\newtheorem{definition}[theorem]{Definition}
\newtheorem{hypothesis}[theorem]{Hypothesis}

\theoremstyle{remark}
\newtheorem{remark}[theorem]{Remark}
\newtheorem{question}[theorem]{Question}
\newtheorem{example}[theorem]{Example}

\numberwithin{equation}{subsection}
\renewcommand{\thebigtheorem}{\Alph{bigtheorem}}


\section{Introduction}


The goal of this paper is to give an overview of a proof of the averaged Colmez conjecture, 
which relates the Faltings heights of CM abelian varieties to logarithmic derivatives of $L$-functions.


\subsection{Statement of the theorem}


Suppose $E$ is a CM field,   and  $A$ is an abelian variety defined over  $\Q^\alg \subset \C$  that 
 admits complex multiplication by the full ring of integers $\co_E \subset E$.  
 This means that $[E:\Q] = 2\cdot \mathrm{dim}(A)$, and  $A$ admits a ring homomorphism  $\co_E \to \End(A)$.

 It is a theorem of Colmez \cite{Colmez} that the Faltings height 
\[
h^\Falt_{(E,\Phi)} = h^\Falt(A)
\]
depends only on the CM type $(E,\Phi)$ of $A$, and not on the abelian variety $A$ itself. 
In the same paper in which he proved this theorem, Colmez proposed a conjectural formula for the value of the Faltings height as a linear combination of logarithmic derivatives of Artin $L$-functions at $s=0$.  
The precise statement is recalled here as Conjecture \ref{conj:general colmez}.

Colmez's conjectural formula generalizes both the Chowla-Selberg \cite{CS} formula for elliptic curves with complex multiplication, and results of Anderson \cite{And82} (following Deligne, Gross, and Shimura) on the periods of $A$ in the special case where $E/\Q$ is Galois with abelian Galois group. See also work of Maillot-Roessler \cite{MR} and Yoshida \cite{Yoshida}.
We will say nothing about Anderson's result and the work that preceded it, and instead refer the reader to the expository paper \cite{gross:expo}.  However, we will take a moment to state the Chowla-Selberg formula, so that the reader may compare it to Theorem \ref{thm:average colmez} below.

Chowla and Selberg originally stated their theorem as a formula for the values of Ramanujan's discriminant $\Delta$ at CM points in the complex upper half plane.  
It was noted in \cite{gross:thesis}, where the observation is credited to Deligne,  than one can use this to obtain a formula for the Faltings heights of elliptic curves with complex multiplication.  
The connection between the two formulations is explained in detail in \S \ref{s:chowla-selberg}.

When expressed in terms of Faltings heights, the Chowla-Selberg theorem says that if  $E$ is a quadratic imaginary field of discriminant $D_E$, and if $\Phi \subset \Hom(E,\C)$ is either of the two CM types of $E$,  then 
\begin{equation}\label{new cs}
h^\Falt_{(E,\Phi)}      =   - \frac{1}{2}  \frac{L'(0, \chi)}{ L(0,\chi)} -  \frac{1}{4} \log|D_E| -  \frac{1}{2} \log (   2\pi   )  .
\end{equation}
Here $\chi : \A^\times \to \{ \pm 1\}$ is the quadratic idele class character associated to the extension $E/\Q$, and $L(s,\chi)$ is the usual Dirichlet $L$-function (excluding the archimedean Euler factors).

Returning to the case of a general CM field, if one holds $E$ fixed and averages both sides of Colmez's conjectural formula over all CM types $\Phi \subset \Hom(E,\C)$, the result is the equality stated below as Theorem \ref{thm:average colmez}.  It is this theorem, which was proved  simultaneously by two groups of authors using different methods, that is the subject of this paper.

\begin{bigtheorem}[Andreatta-Goren-H-Madapusi Pera, Yuan-Zhang]\label{thm:average colmez}
If  $E$ is a CM field with maximal totally real subfield $F$ of degree $d=[F:\Q]$, then
\[
\frac{1}{2^d} \sum_\Phi h^\Falt_{(E,\Phi)} = 
-  \frac{1}{2}  \frac{ L'(0,\chi)  }{ L(0,\chi) }  -   \frac{1}{4}  \log \left| \frac{  D_{E}  }{  D_{F} }\right| 
-  \frac{ d   \log(2\pi)}{ 2 }  .
\]
Here  $D_E$ and $D_F$ are the (absolute) discriminants of $E$ and $F$, 
\[
\chi : \A_F^\times \to \{\pm 1\}
\] 
is the idele class character associated with the extension $E/F$, and the summation on the left is over the $2^d$ distinct CM types $\Phi \subset \Hom(E,\C)$.
\end{bigtheorem}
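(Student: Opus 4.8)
The plan is to interpret the left-hand side as an arithmetic degree on an integral model of a $\GSpin$ Shimura variety, and to compute that same arithmetic degree a second time in terms of the derivative at $s=0$ of an incoherent Eisenstein series, whose value the Siegel--Weil formula expresses through $L(s,\chi)$. Concretely: fix $\delta\in F^\times$ that is negative at exactly one archimedean place of $F$ and positive at the others, and consider the $\Q$-quadratic space underlying $E$ with the form $x\mapsto \mathrm{Tr}_{F/\Q}(\delta\, x\bar x)$; it has signature $(2d-2,2)$. Let $\mathcal{M}$ be the canonical integral model of the associated $\GSpin$ Shimura variety, with its metrized line bundle $\widehat{\taut}$ of weight-one modular forms and its family of special (Kudla--Rapoport--Yang) divisors $\mathcal{Z}(m)$. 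The torus $\mathrm{Res}_{E/\Q}\mathbb{G}_m$ maps to the $\GSpin$ similitude group and cuts out a $0$-dimensional CM cycle whose Zariski closure $\mathcal{Y}\hookrightarrow\mathcal{M}$ is a disjoint union of spectra of rings of integers of CM reflex fields; every one of the $2^d$ CM types of $E$ occurs among the complex points of $\mathcal{Y}$.

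\emph{From Faltings heights to the tautological bundle.} Embedding $\mathcal{M}$ (after passing to a suitable cover and level structure) into a Siegel moduli space via the spin representation produces an abelian scheme $\mathcal{A}\to\mathcal{M}$ whose fibre over a point of $\mathcal{Y}$ is isogenous to a power of the CM abelian variety attached to the corresponding CM type, and the Kodaira--Spencer isomorphism identifies a power of $\taut$ with the top exterior power of the Hodge bundle of $\mathcal{A}$. Feeding this into the definition of the Faltings height, comparing the Petersson metric with the metric used to define $h^\Falt$, and tracking the contribution of the different of $\co_E/\co_F$ to the conductor of the relevant lattice, one obtains an identity of the shape
\[
\frac{1}{2^d}\sum_\Phi h^\Falt_{(E,\Phi)} = -\tfrac12\,\frac{\widehat{\deg}\big(\widehat{\taut}\big|_{\mathcal{Y}}\big)}{\deg\mathcal{Y}} - \tfrac14\log\Big|\tfrac{D_E}{D_F}\Big| - \tfrac{d}{2}\log(2\pi),
\]
so it remains to prove $\widehat{\deg}(\widehat{\taut}|_{\mathcal{Y}})/\deg\mathcal{Y} = L'(0,\chi)/L(0,\chi)$.

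\emph{Computing the height of $\widehat{\taut}$ via Borcherds products.} For a weakly holomorphic input form $f$ with nonnegative integral Fourier coefficients $c(m)$, Borcherds' lifting produces a rational section of a power of $\taut$ with divisor $\sum_{m>0}c(m)\,\mathcal{Z}(m)$ whose logarithmic Petersson norm is an explicit automorphic Green function. Restricting this relation to $\mathcal{Y}$ expresses $\widehat{\deg}(\widehat{\taut}|_{\mathcal{Y}})$ as a finite combination of the arithmetic degrees $\widehat{\deg}(\mathcal{Z}(m)|_{\mathcal{Y}})$ plus an archimedean term. The arithmetic Siegel--Weil formula for CM cycles (Bruinier--Kudla--Yang and its refinements) identifies each of these, together with the archimedean contribution, with a Fourier coefficient of the central derivative $E'(0,\cdot)$ of an incoherent Eisenstein series on a quasi-split orthogonal group over $F$; summing against the $c(m)$ reconstructs the whole derivative, and the classical Siegel--Weil formula evaluates $E(s,\cdot)$ at $s=0$, hence $E'(0)$, in terms of $L'(0,\chi)/L(0,\chi)$ and elementary factors. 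Substituting back yields the claimed equality and confirms that the elementary terms match those in the display above.

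\emph{The main obstacle.} The crux is the local arithmetic intersection computation behind $\widehat{\deg}(\mathcal{Z}(m)|_{\mathcal{Y}})$: it requires a good understanding of the integral models $\mathcal{M}$ and the cycles $\mathcal{Z}(m)$ at every prime, including primes of bad reduction, via the deformation theory of the associated $p$-divisible groups; the resulting local intersection multiplicities are Gross--Keating type lengths that must be matched term by term with derivatives of local Whittaker integrals (local representation densities), while the archimedean place contributes a delicate Green-function integral. Carrying out all the local comparisons, and checking that the sum of the elementary local discrepancies is exactly what the formula predicts, is the heart of the argument. (The Yuan--Zhang proof dispenses with the $\GSpin$ machinery, deriving the averaged identity instead from the arithmetic of quaternionic Shimura curves; but its analytic core --- a height computation matched with a derivative of an Eisenstein series --- is of the same nature.)
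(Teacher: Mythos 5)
Your proposal follows the same overall strategy as the paper --- the $\GSpin$ Shimura variety attached to $V=E$ with the form $x\mapsto\mathrm{Tr}_{F/\Q}(\delta\,x\bar x)$ of signature $(2d-2,2)$, the big CM cycle cut out by $\mathrm{Res}_{E/\Q}\mathbb{G}_m$, the Kuga--Satake abelian scheme, Borcherds products, the Bruinier--Kudla--Yang incoherent Eisenstein series, and a Gross--Keating-style local deformation calculation at the finite places. That is precisely the route of \cite{AGHMP-2} sketched in this paper.

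However there is a real gap at the step linking the Kuga--Satake family to Faltings heights. You assert that ``every one of the $2^d$ CM types of $E$ occurs among the complex points of $\mathcal{Y}$'' and that the fibre of $\mathcal{A}\to\mathcal{M}$ over a point of $\mathcal{Y}$ is ``isogenous to a power of the CM abelian variety attached to the corresponding CM type.'' Both claims are false as stated: the Kuga--Satake abelian variety at a point of the big CM cycle does \emph{not} have CM by $E$ at all. It is isogenous to $2^d$ copies of an abelian variety with complex multiplication by the \emph{total reflex algebra} $E^\sharp$ (a $\Q$-algebra of dimension $2^d$, a product of classical reflex fields), with CM type a Galois conjugate of a single distinguished $\Phi^\sharp$. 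The dimensions do not even work out under your reading: $\dim\mathcal{A}_y=2^{2d-1}$, and an abelian variety with CM of type $(E,\Phi)$ has dimension $d$, so $\mathcal{A}_y$ cannot be isogenous to a power of it unless $d$ is a power of $2$; whereas $2^d$ copies of a $2^{d-1}$-dimensional abelian variety with CM by $E^\sharp$ give exactly $2^{2d-1}$.

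Consequently, the identity you need is $h^\Falt_{(E^\sharp,\Phi^\sharp)}=\frac{1}{2d}\sum_\Phi h^\Falt_{(E,\Phi)}$, and this is not a geometric statement about which points appear on $\mathcal{Y}$; it is derived from (a) Colmez's theorem that $h^\Falt_{(E,\Phi)}$ depends only on the class function $A^0_{(E,\Phi)}$, and depends on it $\Q$-linearly, and (b) the elementary Galois-theoretic identity $A^0_{(E^\sharp,\Phi^\sharp)}=\frac{1}{2d}\sum_\Phi A^0_{(E,\Phi)}$. This ``total reflex pair'' device is flagged in the introduction as the one new conceptual input beyond Yang's method, and your proposal omits it entirely. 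Without it you cannot pass from the arithmetic degree of $\widehat{\taut}$ on $\mathcal{Y}_\Z$ (which by Theorem \ref{thm:modular height} gives the Faltings height of $\mathcal{A}_y$, hence of an abelian variety with CM by $E^\sharp$) to the average on the left-hand side of the theorem. Relatedly, the constants in your displayed identity for $\frac{1}{2^d}\sum_\Phi h^\Falt_{(E,\Phi)}$ in terms of $\widehat{\deg}(\widehat{\taut}|_{\mathcal{Y}})$ are off: the paper's Theorem \ref{thm:taut to faltings} has no $\log|D_E/D_F|$ term and a prefactor $\frac{d}{2}$ rather than $-\frac12$; the $\log|D_E/D_F|$ only enters through the constant term of the Eisenstein series on the other side of the ledger. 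You also need a final twist missing from your sketch: all intermediate identities hold only modulo $\Q$-linear combinations of $\log p$ for $p$ in a finite set of bad primes depending on the choice of $\xi$ and the lattice $L$, and one removes these by varying the auxiliary data so that each $p$ is avoided, then invoking $\Q$-linear independence of $\{\log p\}$.
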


For applications of the theorem to the Andr\'e-Oort conjecture, see work of  Tsimermann \cite{Tsimerman}.

The  proof of Theorem \ref{thm:average colmez} given in \cite{AGHMP-2} is  based on  the method of Yang \cite{Ya10a,Ya10b}, 
who proved  Colmez's conjecture for  some non-Galois quartic CM fields by combining the theory of Borcherds products with some explicit calculations of arithmetic intersection multiplicities  on the integral model of a Hilbert modular surface.
The arguments of  \cite{AGHMP-2}  proceed along the same general lines, but with the Hilbert modular surface replaced by an orthogonal Shimura variety.

These arithmetic intersection calculations are much in the spirit of theorems and conjectures of Kudla \cite{KuAnnals},  and  variants of it explored in the work of Kudla-Rapoport \cite{KR99, KR00},  Bruinier-Yang \cite{BY09},  the author \cite{Ho12,Ho15}, and  various subsets of  those just named \cite{HY, BHY, BKY, BHKRY}.
In particular, \cite{Ho12} and \cite{BHKRY} contain  similar calculations, but for unitary rather than orthogonal Shimura varieties.  
The calculations are also reminiscent of those of Gross-Zagier \cite{GZ} and Gross-Keating \cite{GK}.

If  $a,b\in \C$, let us write \[ a\circeq b \] to mean that  $a-b$ is a $\Q$-linear combination of $\{ \log(p) : p \mbox{ prime}\}$.
In this paper we will outline the proof of Theorem \ref{thm:average colmez}, while also providing  a more-or-less complete proof of the weaker relation
\begin{equation}\label{periods}
\frac{1}{2^d} \sum_\Phi h^\Falt_{(E,\Phi)} \circeq 
-  \frac{1}{2}  \frac{ L'(0,\chi)  }{ L(0,\chi) }  -   \frac{1}{4}  \log \left| \frac{  D_{E}  }{  D_{F} }\right| 
-  \frac{ d   \log(2\pi)}{ 2 }  .
\end{equation}
The qualifier ``more-or-less'' means that the reader must accept as a black box the existence of Borcherds products, and a result of Bruinier-Kudla-Yang on their values at CM points of orthogonal Shimura varieties.

The point is that  the arithmetic intersection multiplicities appearing in the proof of Theorem \ref{thm:average colmez} decompose as sums of local terms, and the weaker relation  (\ref{periods}) only requires computing the archimedean contribution.  In particular, one only needs  the canonical models over $\Q$ of orthogonal Shimura varieties, not their integral models.


\subsection{Outline of the paper}


We now give a fairly detailed outline of both the structure of the paper, and of the proof of Theorem  \ref{thm:average colmez}   up to $\circeq$.

In \S \ref{s:chowla-selberg} we recall the original  Chowla-Selberg formula on the  CM values of Ramanujan's discriminant $\Delta$, and explain how to deduce from it the reformulation (\ref{new cs}) in terms of Faltings heights.  

Over the open modular curve $\mathcal{M}$ parametrizing elliptic curves over arbitrary schemes,  there is a metrized line bundle of weight one modular forms $\widehat{\taut}$. 
If  $A$ is an elliptic curve over a number field $\kk$ with everywhere good reduction, the theory of Neron models provides an extension of $A$ to an elliptic curve over $\co_\kk$, which defines a morphism
$
 \Spec(\co_\kk)  \to \mathcal{M}.
$
The Faltings height of $A$ can be computed by pulling back $\widehat{\taut}$ to a metrized line bundle on $ \Spec(\co_\kk)$ and taking its arithmetic degree.

In this way we see that the Faltings heights of elliptic curves are encoded by the metrized  line bundle $\widehat{\taut}$.  Ramanujan's discriminant enters the picture because it provides a concrete  trivialization of 
the $12^\mathrm{th}$ power of this line bundle.  

In \S \ref{s:colmez} we recall some  basic properties of abelian varieties with complex multiplication, and state Colmez's conjecture in full generality.  
There is  only one new idea here.  
Given a CM field $E$ and an embedding $E \to \C$, we construct the \emph{total reflex pair} $(E^\sharp,\Phi^\sharp)$, which  satisfies 
\begin{equation}\label{funny falt}
h^\Falt_{(E^\sharp,\Phi^\sharp)} = \frac{1}{2d} \sum_\Phi h^\Falt_{(E,\Phi)}.
\end{equation}
Using this, the averaged  Colmez conjecture for $E$ can be reduced to the \emph{exact} Colmez conjecture for $(E^\sharp,\Phi^\sharp)$.    
Admittedly,  at the moment this looks less like a reduction and more like   retrograde motion.

In \S \ref{s:orthogonal shimura variety} we introduce the orthogonal Shimura variety $M$ associated to a rational quadratic space $(V,Q)$   of signature $(n,2)$ and a choice of maximal lattice $L\subset V$ 
(\emph{maximal} means that  the quadratic form is $\Z$-valued on $L$, and that $L$ is maximal among all lattices with this property).
  It is a smooth $n$-dimensional Deligne-Mumford stack over $\Q$.

The Shimura variety $M$  is of Hodge type, but is  of PEL type only when $n\in \{ 0,1,2,3,4,6\}$.  Thus, while  $M$ does not have a simple moduli-theoretic interpretation, it can be embedded into a Siegel moduli space parametrizing polarized abelian varieties.
The Kuga-Satake construction provides a particularly natural way to do this, and pulling back the universal polarized abelian variety from the Siegel space yields the \emph{Kuga-Satake abelian scheme} \[A\to M\] of relative dimension $\dim(A)=2^{n+1}$.
The  Shimura variety $M$ also carries over it a line bundle of weight one modular forms $\taut$, and a family of special divisors $Z(m,\mu)$ indexed by  rational numbers $m>0$ and cosets $\mu\in L^\vee/ L$.

In \S \ref{ss:ks heights} we explain the connection between $\taut$ and Faltings heights.
If $y\in M(\kk)$ is a point valued in a number field $\kk$, the Faltings height of the fiber $A_y$ satisfies
\begin{equation}\label{ks faltings}
 \frac{-1}{ [\kk : \Q ] }\sum_{ \sigma : \kk \to \C}  \log ||     v^\sigma     ||   \circeq       \frac{1}{ 2^n }  \cdot h^\Falt ( A_y) +    \log(2\pi)  
\end{equation}
for any nonzero vector $v \in \taut_y$.  Here  $||  \cdot || $ is the  metric (\ref{pet metric}) on $\taut$.

In \S  \ref{s:integral}  we define a flat extension of $M$ to a stack  $\mathcal{M}$ over $\Z$.
We also extend the Kuga-Satake abelian scheme to $\mathcal{A} \to \mathcal{M}$, and construct extensions $\taut$ and $\mathcal{Z}(m,\mu)$  of the  line  bundle of modular forms and the family of special divisors to $\mathcal{M}$.
In particular, we obtain a class
\[
\widehat{\taut} \in  \widehat{\Pic}(\mathcal{M})
\]
in the group of metrized line bundles on $\mathcal{M}$.

The Shimura variety  $\mathcal{M}$, endowed with its line bundle $\taut$ and its Kuga-Satake family, is analogous to the modular curve endowed with its line bundle of modular forms and universal elliptic curve.
However, on $\mathcal{M}$   there is no natural analogue of the discriminant $\Delta$ that can be used to trivialize a power of $\taut$.

This is addressed in \S \ref{s:harmonic}, where we recall the theory of Borcherds products. 
The metaplectic double cover of  $\SL_2(\Z)$ acts via the Weil representation $\weil_L$ on the space $S_L$ of $\C$-valued functions on $L^\vee /L$.  Suppose
\[
f(\tau)  = \sum_{ m\gg -\infty} c(m) \cdot q^m \in M^!_{1-\frac{n}{2}}( \weil_L)
\]
is a weakly holomorphic form of weight $1-n/2$ and representation $\weil_L$.  
Each Fourier coefficient $c(m)\in S_L$ decomposes as a linear combination 
\[
c(m) = \sum_{ \mu \in L^\vee / L } c(m,\mu) \cdot \varphi_\mu,
\] 
where $\varphi_\mu \in S_L$ is the characteristic function of $\mu$.

Assuming that all $c(m,\mu)$ are integers,  the Borcherds product of $f$ is a  rational section $\bm{\psi}(f)$ of $\taut^{\otimes c(0,0)}$ with divisor 
 \[
 \mathcal{Z}(f) = \sum_{ \substack{ m >0 \\  \mu \in L^\vee / L  }}  c(-m,\mu) \cdot \mathcal{Z}(m,\mu).
 \]
The theory of regularized theta lifts allows one to construct a Green function $\Phi(f)$ for $\mathcal{Z}(f)$, and   the resulting class 
\[
\widehat{\mathcal{Z}}(f) = ( \mathcal{Z}(f) ,\Phi(f)) \in \widehat{\mathrm{CH}}^1(\mathcal{M})
\]
in the codimension one arithmetic Chow group of $\mathcal{M}$ is 
essentially equal to  $\widehat{\taut}^{\otimes c(0,0)}$ under the isomorphism
\begin{equation}\label{space of lines}
\widehat{\mathrm{CH}}^1(\mathcal{M}) \iso \widehat{\Pic}(\mathcal{M}).
\end{equation}
The precise relation is found in Corollary \ref{cor:taut to special}.

In \S \ref{s:big cm} we choose the quadratic space  $(V,Q)$  in a particular way.  Let $E$ be a CM field with 
 $[E:\Q]=2d>2$.  Choose a $\xi \in F^\times$  that is negative at some fixed real embedding $F\to \R$, and positive at all remaining embeddings.   
 Endow $V=E$, viewed as a vector space over $\Q$ of dimension $2d$, with the   quadratic form
\[
Q(x) = \mathrm{Trace}_{F/\Q} ( \xi  x\overline{x} )
\]
of signature $(2d-2,2)$.

The Shimura variety associated to this quadratic space comes equipped with distinguished cycle 
$
\mathcal{Y}_\Z \to\mathcal{M},
$
which is regular and finite  flat over $\Spec(\Z)$.  
In particular it has dimension $1$.  We obtain a linear functional
\[
[ - : \mathcal{Y}_\Z]  : \widehat{\mathrm{CH}}^1(\mathcal{M}) \to \R
\]
as the composition
\[
\widehat{\mathrm{CH}}^1(\mathcal{M}) \to   \widehat{\mathrm{CH}}^1(\mathcal{Y}_\Z) \map{\widehat{\deg}} \R,
\]
where the first arrow is pullback of arithmetic divisors, and the second is the arithmetic degree.

The Kuga-Satake abelian scheme $\mathcal{A} \to \mathcal{M}$ acquires complex multiplication of a particular type when restricted to $\mathcal{Y}_\Z$.  
More precisely, for every  point $y\in \mathcal{Y}_\Z(\C)$ the fiber $\mathcal{A}_y$ is isogenous to a power of an abelian variety with complex multiplication by $E^\sharp$, and CM type a Galois conjugate of $\Phi^\sharp$. 
Using  this and the connection between  $\widehat{\taut}$ and the Faltings height provided by (\ref{ks faltings}) we deduce 
\[
\frac{  [ \widehat{\taut} : \mathcal{Y}_\Z ] }{ \deg_\C(\mathcal{Y}_\Z) }
 \circeq 
 \frac{ 1}{ 2^{d-2}   }    h^\Falt_{( E^\sharp,\Phi^\sharp) }  +      \log(2\pi)  ,
\]
which we rewrite, using  (\ref{funny falt}), as
\begin{equation}\label{intro taut height}
\frac{  [ \widehat{\taut} : \mathcal{Y}_\Z ] }{ \deg_\C(\mathcal{Y}_\Z) } \circeq 
 \frac{ 1}{ d\cdot 2^{d-1}   }    \sum_\Phi h^\Falt_{(E,\Phi)}  +      \log(2\pi)  .
\end{equation}
See  Theorem \ref{thm:taut to faltings} for a stronger statement.

In \S \ref{s:special to eisenstein} we turn to the calculation of the arithmetic intersection
$[ \widehat{\mathcal{Z}}(f) : \mathcal{Y}_\Z ]$.  The cycles in question intersect properly, and so this intersection decomposes as the sum 
\[
[ \widehat{\mathcal{Z}}(f) : \mathcal{Y}_\Z ] = [ \mathcal{Z}(f) : \mathcal{Y}_\Z ]_\mathrm{fin} + [ \Phi(f) : \mathcal{Y}_\Z ]_\infty
\]
of a contribution from finite places  and an archimedean term

The archimedean contribution was computed by Bruinier-Kudla-Yang \cite{BKY}, who showed that it can be expressed in terms of the coefficients of the central derivative of a Hilbert modular Eisenstein series of weight one. 
 If one works modulo $\circeq$,   all of the coefficients vanish except for the constant term, and the constant term is essentially 
the logarithmic derivative of $L(s,\chi)$ at $s=0$.   In particular, the theorem of Bruinier-Kudla-Yang shows that 
\[
\frac{ 2d }{  c(0,0)   } \cdot \frac{ [ \Phi(f) : \mathcal{Y}_\Z ]_\infty }{  \deg_\C( \mathcal{Y}_\Z)  }
\circeq
- 2  \frac{  L'(0,\chi)  }{ L(0,\chi) }  - \log \left| \frac{  D_{E}  }{  D_{F} }\right|  +d  \cdot  \log(4\pi e^\gamma) .
\]
As $[ \mathcal{Z}(f) : \mathcal{Y}_\Z ]_\mathrm{fin}  \circeq 0$, we deduce 
\begin{equation}\label{intro intersect}
\frac{ 2d }{  c(0,0)   } \cdot \frac{ [ \widehat{\mathcal{Z}}(f)  : \mathcal{Y}_\Z ] }{  \deg_\C( \mathcal{Y}_\Z)  }
\circeq
- 2  \frac{  L'(0,\chi)  }{ L(0,\chi) }  - \log \left| \frac{  D_{E}  }{  D_{F} }\right|  +d  \cdot  \log(4\pi e^\gamma) .
\end{equation}
See Corollary \ref{cor:intersection} for a stronger statement.

Combining  (\ref{intro taut height}),  (\ref{intro intersect}), and the relation 
 \[
\frac{d}{2} \cdot   \frac{  [  \widehat{\taut}   :    \mathcal{Y}_\Z  ]  }  {  \deg_\C( \mathcal{Y}_\Z)      }    
 =\frac{d}{2c(0,0)} \cdot  \frac{  [ \widehat{\mathcal{Z}}(f) : \mathcal{Y}_\Z ] }  {   \deg_\C( \mathcal{Y}_\Z)  }  
- \frac{d}{4} \cdot  \log ( 4 \pi  e^\gamma )  
 \]
 obtained by comparing $\widehat{\taut}$ and $\widehat{\mathcal{Z}}(f)$ under the isomorphism (\ref{space of lines}),
we find that the equality in Theorem \ref{thm:average colmez} holds up to $\circeq$.  

In order to upgrade from $\circeq$ to actual equality, one must strengthen both (\ref{intro taut height}) and (\ref{intro intersect}).
It is the latter which is more difficult, and occupies much of  \cite{AGHMP-2}.
To do this one must  compute  the finite intersection multiplicities 
$[\mathcal{Z}(m,\mu) : \mathcal{Y}_\Z]_\mathrm{fin}$ of all special divisors, and compare them with the same Fourier coefficients of Eisenstein series appearing in the work of Bruinier-Kudla-Yang.  The precise statement is Theorem \ref{thm:deformation}.


\section{The Chowla-Selberg formula}
\label{s:chowla-selberg}


We recall here the original statement of the Chowla-Selberg formula concerning CM values of Ramanujan's discriminant, and explain how to deduce from it the reformulation  in terms of Faltings heights.   
To some extent this is a formality, and our real purpose is to acquaint the  reader some ideas that will appear  in our sketch of the proof of Theorem \ref{thm:average colmez}. 

The central idea that we hope to convey here is that Faltings heights of elliptic curves can be computed using the metrized line bundle of weight one modular forms on the modular curve.  
As Ramanujan's discriminant provides a trivialization of the $12^\mathrm{th}$ power of this line bundle, we obtain a connection between discriminants and Faltings heights.


\subsection{The analytic formulation}


As usual, we let the group $\SL_2(\Z)$ act on the complex upper half-plane via
\[
g \cdot  \tau =  \frac{ a \tau +b}{c\tau+d } ,
\]
where  $g=\left(\begin{smallmatrix}  a & b \\ c & d \end{smallmatrix}\right) $.
By a \emph{weak modular form} of weight $k$ we mean a holomorphic function $f : \mathcal{H} \to\C$ on the complex upper half-plane
satisfying the transformation law
\[
f (g\tau) = (c\tau+d)^k   \cdot  f(\tau)
\]
for all $g\in \SL_2(\Z)$ as above.  The adjective \emph{weak} is added because we are imposing no conditions  on the behavior of $f(\tau)$ as $\tau \to i \infty$.

The most famous example of such a function is Ramanujan's modular discriminant of weight $12$,  defined by 
\[
\Delta(\tau) =  q \prod_{n=1}^\infty(1-q^n)^{24}
\]
where  $q=e^{2\pi i \tau}$.
We   may view $\Delta$ as a function on lattices $L \subset \C$ in the usual way:  choose a $\Z$-basis  $ \omega_1,\omega_2 \in L$ such  that $\omega_1/\omega_2\in \mathcal{H}$, and set 
\[
\Delta(L) =  \omega_2^{-12} \cdot \Delta ( \omega_1/\omega_2) .
\]
It is clear from the transformation law that this doesn't depend on the choice of basis $\{ \omega_1,\omega_2\}$, and of course
\[
\Delta(\alpha L) = \alpha^{-12} \Delta(L)
\]
for any $\alpha\in \C^\times$.

Fix a quadratic imaginary field $E=\Q(\sqrt{-d})$ of discriminant $-d$.   
Denote by  $\mathrm{CL}(E)$ the ideal class group of $E$, by $h=|\mathrm{CL}(E)|$ its  class number, and by $w=|\co_E^\times|$ the  number of number of roots of unity in $E$.  Let 
\[
\chi : (\Z/ d\Z)^\times \to \{\pm 1\}
\]
 be the corresponding  Dirichlet character.

Choose  an embedding $E \subset \C$. 
This allows us to view  a fractional ideal $\mathfrak{a} \subset E$  as a  lattice in $\C$, and the product 
$\Delta(\mathfrak{a})\Delta(\mathfrak{a}^{-1})$   depends only on the image of $\mathfrak{a}$ in the ideal class group.

\begin{theorem}[Chowla-Selberg  \cite{CS}, see also \cite{Weil}]
We have
\[
 (2\pi d)^{12 h}  \prod_{ \mathfrak{a} \in \mathrm{CL}(E) } \Delta(\mathfrak{a}) \Delta( \mathfrak{a}^{-1} ) 
=  \prod_{  0 < a < d } \Gamma \left(a/d \right)^{6 w  \chi (a) }.
\]
\end{theorem}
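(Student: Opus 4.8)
The plan is to prove the Chowla--Selberg formula via the Kronecker limit formula, which computes the constant term in the Laurent expansion at $s=1$ of the Epstein zeta function attached to a binary quadratic form (equivalently, to a lattice in $\C$), and to then assemble these over the ideal class group so that the resulting combination of $L$-functions is forced by the analytic class number formula and by a functional-equation computation of $L'(0,\chi)/L(0,\chi)$ to take the shape of a product of Gamma values.

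First I would recall the (first) Kronecker limit formula in the form
\[
E(\tau,s) = \sum_{(m,n)\neq(0,0)} \frac{y^s}{|m\tau+n|^{2s}}
= \frac{\pi}{s-1} + 2\pi\bigl(\gamma - \log 2 - \log(\sqrt{y}\,|\eta(\tau)|^2)\bigr) + O(s-1),
\]
where $\eta$ is the Dedekind eta function, so that $|\Delta(\tau)| = (2\pi)^{12}|\eta(\tau)|^{24}$. For a fractional ideal $\mathfrak a$ with $\Z$-basis chosen in $\mathcal H$, the lattice $\mathfrak a\subset\C$ has an Epstein zeta function $Z_{\mathfrak a}(s)$ which, after normalizing by $N\mathfrak a$, equals $y^s/|\cdots|^{2s}$-type sums and hence has the same kind of Laurent expansion, with constant term involving $\log\bigl(N\mathfrak a\cdot\sqrt{y}\,|\eta|^2\bigr)$; this is where $\Delta(\mathfrak a)$, and after pairing $\mathfrak a$ with $\mathfrak a^{-1}$ the symmetric combination $\Delta(\mathfrak a)\Delta(\mathfrak a^{-1})$, enters. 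Summing $Z_{\mathfrak a}(s)$ over $\mathfrak a\in\mathrm{CL}(E)$ gives, essentially by the decomposition of the Dedekind zeta function, $\tfrac{w}{2}\,\zeta_E(s) = \tfrac{w}{2}\,\zeta(s)L(s,\chi)$ (with appropriate normalization of the quadratic form / absolute value of discriminant $d$), so the pole and the constant term of the left side are governed by the Laurent expansion of $\zeta(s)L(s,\chi)$ at $s=1$.

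Next I would extract the identity by matching constant terms: the pole $\pi/(s-1)$ matches $\mathrm{Res}_{s=1}\zeta_E(s)$ via the analytic class number formula ($= \tfrac{2\pi h}{w\sqrt d}$), which fixes all the normalizing constants; the constant term then reads
\[
\sum_{\mathfrak a}\Bigl(\gamma-\log 2-\log\bigl(N\mathfrak a\sqrt{y_{\mathfrak a}}\,|\eta(\tau_{\mathfrak a})|^2\bigr)\Bigr)
= (\text{const from }\zeta L)\cdot\Bigl(\gamma + \frac{L'(1,\chi)}{L(1,\chi)} + \cdots\Bigr).
\]
Then I would invoke the functional equation of $L(s,\chi)$ for the odd character $\chi$ to convert $L'(1,\chi)/L(1,\chi)$ into $L'(0,\chi)/L(0,\chi)$ plus explicit $\log$ and digamma terms, and finally invoke Lerch's (Kronecker's) formula
\[
L'(0,\chi) = \sum_{0<a<d}\chi(a)\log\Gamma(a/d) - \frac{L(0,\chi)}{2}\log d
\]
(valid because $\sum\chi(a)=0$ for nontrivial $\chi$), together with $L(0,\chi) = \tfrac{2h}{w}$. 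Exponentiating the resulting log-identity and clearing the elementary factors produces exactly $(2\pi d)^{12h}\prod_{\mathfrak a}\Delta(\mathfrak a)\Delta(\mathfrak a^{-1}) = \prod_{0<a<d}\Gamma(a/d)^{6w\chi(a)}$; note the exponent $6w$ rather than $12h$ precisely reflects the factor $w/2$ coming from $\co_E^\times$ acting on ideals, versus $h=|\mathrm{CL}(E)|$.

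The main obstacle, and the step deserving the most care, is the passage from $L'(1,\chi)/L(1,\chi)$ to $L'(0,\chi)/L(0,\chi)$ and thence to $\prod\Gamma(a/d)^{\chi(a)}$: the functional equation for an odd Dirichlet character introduces a digamma function $\Gamma'/\Gamma$ evaluated at half-integers and a Gauss-sum phase $\tau(\chi)$, and one must see that the Gauss sum contributes only to factors that cancel (using $\tau(\chi)\overline{\tau(\chi)} = d$ and that $\chi$ is real) and that the digamma terms recombine, via the reflection and multiplication formulas for $\Gamma$, into exactly the weighted product $\prod_{0<a<d}\Gamma(a/d)^{\chi(a)}$. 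Everything else — the Kronecker limit formula, the class number formula, the decomposition $\zeta_E=\zeta\cdot L(\cdot,\chi)$, and bookkeeping of the constants $2\pi$, $d$, $\gamma$, $\log 2$ — is either classical input or routine. A secondary point requiring attention is the correct normalization relating $\Delta(\mathfrak a)$ as a function on lattices to the $|\eta|^{24}$ appearing in the limit formula, and the choice of discriminant $-d$ (rather than $-4d$ or similar), which must be tracked consistently so that the power of $d$ on the left is $12h$ and not something else.
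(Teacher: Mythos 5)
The paper states the Chowla--Selberg theorem with citations to \cite{CS} and \cite{Weil} but gives no proof of its own, so there is nothing internal to compare against. Your proposed route --- the first Kronecker limit formula, the decomposition $\zeta_E(s) = \zeta(s)\,L(s,\chi)$ assembled from Epstein zeta functions over $\mathrm{CL}(E)$, the analytic class number formula to fix the residue, and the functional-equation/Lerch conversion from $s=1$ to $s=0$ --- is precisely the classical method followed in those references, so the outline is sound.

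Two constant-tracking slips are worth flagging. First, with the paper's normalization $\Delta(\tau) = q\prod_{n\geq 1}(1-q^n)^{24}$ one has $\Delta(\tau) = \eta(\tau)^{24}$ exactly, with no extra $(2\pi)^{12}$: the factor $(2\pi)^{12}$ in the paper enters only later, through the $(2\pi i)^k$ in the trivialization $f\mapsto \underline{f}$ of $\taut^{\otimes k}$, not in $\Delta$ itself. Second, Lerch's formula for a primitive odd character mod $d$ reads
\[
L'(0,\chi) = -L(0,\chi)\,\log d \;+\; \sum_{0<a<d}\chi(a)\log\Gamma(a/d),
\]
so the $\log d$ term carries the full factor $L(0,\chi)$, not $L(0,\chi)/2$; this is exactly what rearranges, using $L(0,\chi) = 2h/w$, into the form $\log d + L'(0,\chi)/L(0,\chi) = \tfrac{w}{2h}\sum_a\chi(a)\log\Gamma(a/d)$ used by the paper. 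Neither slip undermines the strategy, but both must be corrected to land on the stated exponents $12h$ and $6w$.
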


 Using Lerch's formula 
\[
\log(d) + \frac{L'(0, \chi)}{L(0,\chi)}   =   \frac{ w }{ 2 h}  \sum_{ 0 < a < d }   \chi(a)  \log \big( \Gamma ( a/ d) \big) 
\]
 one can rewrite the Chowla-Selberg formula as
 \begin{equation}\label{better CS}
\frac{1}{24 h} \sum_{ \mathfrak{a} \in \mathrm{CL}(E) } \log|  \Delta(\mathfrak{a}) \Delta( \mathfrak{a}^{-1} )  |
  =    \frac{1}{2}  \frac{L'(0, \chi)}{L(0,\chi)}  - \frac{1}{2}  \log(2\pi).
  \end{equation}


\subsection{Arithmetic intersections}
\label{ss:arakelov}


We need some rudimentary ideas from arithmetic intersection theory, following \cite{GS,soule,gillet}.
Let  $\mathcal{M}$ be a locally integral Deligne-Mumford stack, flat and of finite type over $\Spec(\Z)$, and with smooth generic fiber.

\begin{definition}
Suppose $\mathcal{Z}$ is a Cartier divisor on $\mathcal{M}$.  A \emph{Green function} for $\mathcal{Z}$ is a smooth real-valued function $\Phi$ on the complex orbifold $\mathcal{M}(\C) \smallsetminus \mathcal{Z}(\C)$ satisfying the following two properties.
\begin{enumerate}
\item
If  $f$ is a meromorphic function on  a holomorphic  orbifold chart $U\to  \mathcal{M}(\C)$ satisfying
\[
\mathrm{div}(f) = \mathcal{Z}(\C)|_U ,
\]
 then  $\Phi|_U  + 2 \log | f |$, initially defined on $U \smallsetminus \mathcal{Z}(\C)|_U$,   extends  to a smooth function on $U$;
\item
Pullback by complex conjugation on $\mathcal{M}(\C)$ fixes $\Phi$.
\end{enumerate}
\end{definition}

\begin{definition}
An \emph{arithmetic divisor} on $\mathcal{M}$ is a pair 
$
\widehat{\mathcal{Z}} = (\mathcal{Z}, \Phi)
$ 
consisting of a Cartier divisor $\mathcal{Z}$ on $\mathcal{M}$ and a Green function $\Phi$ for $\mathcal{Z}$.
An arithmetic divisor as above is \emph{principal} if it has the form
\[
\widehat{\mathcal{Z}} = ( \mathrm{div}(f) , -2 \log |f| )
\]
for some rational function $f$ on $\mathcal{M}$.
\end{definition}

\begin{definition}
A \emph{metrized line bundle} on $\mathcal{M}$ is a pair 
\[
\widehat{\mathcal{L}} = (\mathcal{L} , || \cdot || )
\]
consisting of a line bundle on $\mathcal{M}$, and a smoothly varying family of Hermitian metrics on its complex fiber. 
We further require the metrics to be invariant under  pullback by complex conjugation on $\mathcal{M}(\C)$.
\end{definition}

The \emph{codimension one arithmetic Chow group} $\widehat{\mathrm{CH}}^1(\mathcal{M})$ is the quotient of the group of all arithmetic divisors on $\mathcal{M}$ by the subgroup of principal arithmetic divisors.  If we denote by $\widehat{\Pic}(\mathcal{M})$ the group of metrized line bundles (under tensor product), there is a canonical isomorphism
\begin{equation}\label{metric to chow}
\widehat{\Pic}(\mathcal{M}) \iso \widehat{\mathrm{CH}}^1(\mathcal{M})
\end{equation}
sending a metrized line bundle $\widehat{\mathcal{L}}$ to the arithmetic divisor 
\[
\widehat{\mathrm{div}}(s) = (\mathrm{div}(s) , - 2  \log||s||)
\] 
for any nonzero rational section $s$ of $\mathcal{L}$.

Now suppose that $\mathcal{Y}$ is a regular Deligne-Mumford stack, finite and flat over $\Spec(\Z)$.   
In particular  $\dim(\mathcal{Y})=1$, and the assumption of regularity means that we need not distinguish between Cartier divisors and Weil divisors.  
There is a linear functional 
\[
\widehat{\deg} : \widehat{\mathrm{CH}}^1(\mathcal{Y}) \to \R
\] 
called the \emph{arithmetic degree}, defined as follows.  As any  divisor on $\mathcal{Y}$ has empty generic fiber,  any arithmetic divisor on $\mathcal{Y}$ decomposes uniquely as a finite sum
\[
  ( 0 ,\Phi )  +   \sum_i m_i  \cdot  (\mathcal{Z}_i , 0 ) 
\]
in which $m_i\in \Z$,  each $\mathcal{Z}_i$ is an irreducible effective  divisor on $\mathcal{Y}$ supported in a single nonzero characteristic, and $\Phi$ is a complex conjugation invariant function on the $0$-dimensional orbifold $\mathcal{Y}(\C)$.  
Thus it suffices to define
\[
\widehat{\deg}( \mathcal{Z},0) = \sum_{ y\in \mathcal{Z}(\F_p^\alg) } \frac{\log(p)}{ |\Aut(y) | }
\]
when $\mathcal{Z}$ is irreducible and supported in characteristic $p$, and define
\[
\widehat{\deg}( 0 , \Phi ) =  \frac{1}{2} \sum_{ y \in \mathcal{Y}(\C) } \frac{\Phi(y) }{ |\Aut(y) | }.
\]

Keeping $\mathcal{Y}$ and $\mathcal{M}$ as above, suppose we are given a morphism
\[
\mathcal{Y} \to \mathcal{M}.
\]
Composing the arithmetic degree with  pullback of metrized line bundles
\[
\widehat{\mathrm{CH}}^1(\mathcal{M})\iso \widehat{\Pic}(\mathcal{M}) \to \widehat{\Pic}(\mathcal{Y}) \iso \widehat{\mathrm{CH}}^1(\mathcal{Y})
\]
defines a linear functional
\[
 [- : \mathcal{Y} ] : \widehat{\mathrm{CH}}^1(\mathcal{M}) \to \R
\]
called \emph{arithmetic intersection against $\mathcal{Y}$}.

Suppose $\widehat{\mathcal{Z}} = ( \mathcal{Z} ,\Phi)$ is an arithmetic divisor on $\mathcal{M}$, and suppose further that the underlying Cartier divisor $\mathcal{Z}$ is effective and meets $\mathcal{Y}$ properly
 in the sense that 
\[
\mathcal{Z} \cap \mathcal{Y} \define \mathcal{Z} \times_{\mathcal{M}} \mathcal{Y}
\]
has dimension $0$.   In this simple case there is a decomposition 
\[
[ \widehat{\mathcal{Z}} : \mathcal{Y} ] 
= 
 [  \mathcal{Z} : \mathcal{Y} ]_\mathrm{fin} + [ \Phi : \mathcal{Y}]_\infty
\]
where the finite contribution is 
\begin{equation}\label{finite decomp by points}
[  \mathcal{Z} : \mathcal{Y} ]_\mathrm{fin}=
\sum_p  \log(p) \left(
\sum_{   y\in (\mathcal{Z}\cap \mathcal{Y})( \F_p^\alg)     }
 \frac{ \mathrm{length}\big( \co^\mathrm{et}_{ \mathcal{Z}\cap \mathcal{Y}  ,y  }  \big)  }{ | \Aut(y) | } \right),
\end{equation}
and the archimedean contribution is
\[
[ \Phi : \mathcal{Y} ]_\infty
= \frac{1}{2} \sum_{ y \in \mathcal{Y}(\C) } \frac{\Phi(y) }{ |\Aut(y) | } .
\]


\subsection{The Faltings height}


Suppose $A$ is an elliptic curve defined over a number field  $\kk$, and that $A$ has everywhere good reduction.  
Denote in the same way its   N\'eron model 
\begin{equation}\label{elliptic neron}
 A \to \Spec(\co_\kk).
\end{equation}

The space of global $1$-forms  on $A$ is a projective $\co_\kk$-module of rank one.  
After choosing a nonzero element  
\[
\eta \in H^0 ( A ,\Omega_{A/\co_\kk}),
\]
define the archimedean part of the Faltings height by
\[
h^\Falt _\infty ( A, \eta ) = \frac{-1}{ 2 [\kk:\Q] }  \sum_{ \sigma : \co_\kk \to \C}
\log \big|   \int_{ A^\sigma(\C) } \eta^\sigma \wedge \overline{\eta^\sigma}\,  \big|,
\]
where $A^\sigma \to \Spec(\C)$  is the base change of $A$ by $\sigma$, and  similarly for $\eta^\sigma$.
Define the finite part of the Faltings height by
\[
h^\Falt_f(A,\eta) = \frac{1}{  [\kk:\Q] }    \log|   H^0(A, \Omega_{A/\co_\kk } )   /  \co_\kk \eta     | .
\]
The \emph{Faltings height}
\[
h^\Falt(A) =    h^\Falt _\infty ( A,\eta)   + h^\Falt_f(A, \eta) 
\]
is independent of the choice of  $\eta$, and is unchanged if we enlarge $\kk$.

Let $\mathcal{M}$ be the Deligne-Mumford stack over $\Z$ classifying elliptic curves,  and let $\pi : \mathcal{A} \to \mathcal{M}$ be the universal elliptic curve.  The \emph{line bundle of weight one modular forms} on $\mathcal{M}$ is  
\begin{equation*}
\taut  =   \pi_* \Omega_{\mathcal{A} / \mathcal{M} } \iso \Lie( \mathcal{A})^{-1} .
\end{equation*}
We endow  $\taut$ with the Faltings metric, which at a  complex point $y\in \mathcal{M}(\C)$ assigns the norm
\[
|| \eta ||^2_y =  \Big|   \int_{\mathcal{A}_y(\C) } \eta \wedge \overline{\eta}   \, \Big|
\]
to a global $1$-form
$
\eta \in \taut_y \iso H^0( \mathcal{A}_y , \Omega_{ \mathcal{A}_y / \C}),
$
and denote by 
\[
\widehat{\taut} \in \widehat{\Pic}(\mathcal{M})
\]
 the resulting metrized line bundle on $\mathcal{M}$.

The  N\'eron model (\ref{elliptic neron}) is  an elliptic curve over $\co_\kk$, and so determines a morphism
\[
\mathcal{Y}\define \Spec(\co_\kk) \to \mathcal{M}.
\]
Unwinding the  definitions, we find that 
\begin{equation}\label{elliptic faltings}
h^\Falt(A) =  \frac{ [ \widehat{\taut}  : \mathcal{Y} ] } { [\kk :\Q]  }.
\end{equation}


\subsection{Chowla-Selberg revisited}


 For each integer $k$ there is  an isomorphism of $\C$-vector spaces
\begin{equation*}
\xymatrix{
{   \{ \mbox{weak modular forms of weight }k \}  }  \ar[d]^{ f\mapsto \underline{f} }   \\ 
{   \{ \mbox{global holomorphic sections of } \taut^{\otimes k} \} .  }
}
\end{equation*}
To make this isomorphism explicit, let the group  $\SL_2(\Z)$ act on  $\mathcal{H} \times \C$ by 
 \[
 \left(\begin{matrix} a & b\\ c & d \end{matrix}\right)   \cdot  ( \tau, z )     = \left(  \frac{a\tau + b}{c \tau +d}  , (c\tau+d)^k \cdot z   \right).
\]
There is a commutative diagram
\[
 \xymatrix{
  {   \SL_2(\Z) \backslash (\mathcal{H} \times \C)   } \ar[r]  \ar[d]_{\iso}  &       {  \SL_2(\Z) \backslash \mathcal{H}  } \ar[d]_{\iso} \\
  {  \taut^{ \otimes k} (\C)  } \ar[r]   &  {\mathcal{M}(\C)    } 
 }
\]
in which the  isomorphism on the left  sends   $( \tau , z) \in \mathcal{H}\times \C$ to   the complex elliptic curve \[A_\tau(\C)= \C/ (\Z\tau + \Z)\] together with the vector 
 $
 z\in \C \iso  \Lie(A_\tau)^{ \otimes -k} 
 $
 in the fiber of $\taut^{\otimes k}$ at this point.
 Here the Lie algebra  $\Lie(A_\tau)$, and hence also its tensor powers,  has been  trivialized in the obvious way.
 If $f$ is a weak modular form of weight $k$, then 
\[
\underline{f}(\tau) = \big(   \tau , (2\pi i )^k  f(\tau) \big)
\]
defines a section   to the top horizontal arrow, which we interpret as a global holomorphic section of $\taut^{\otimes k}$.

Using the $q$-expansion principle, one can show that the holomorphic global section $\underline{\Delta}$ on $\mathcal{M}(\C)$ determined by $\Delta$  is algebraic and descends to the $\Q$-stack $\mathcal{M}_\Q$.  In fact, it  extends (necessarily uniquely) to a global section 
\[
\underline{\Delta} \in H^0 ( \mathcal{M} ,\taut^{\otimes 12} ),
\]
which trivializes the line bundle $\taut^{\otimes 12}$.

\begin{proposition}\label{prop:disc trivializes}
The isomorphism (\ref{metric to chow}) sends the metrized line bundle $\widehat{\taut}^{\otimes 12}$ to the arithmetic divisor
\[
(0 ,\Phi) \in \widehat{\mathrm{CH}}^1(\mathcal{M}),
\]
where, for any  $s \in \mathcal{M}(\C)$ we choose a lattice $L_s\subset \C$ satisfying 
\[
\mathcal{A}_s(\C) \iso \C / L_s,
\]
and  define 
\[
 \Phi(s)   =  -12 \cdot \log\Big|  4\pi^2    \Delta(L_s)^{1/6}     \int_{\C / L_s } dz\wedge d\overline{z}  \,  \Big|.
\]
\end{proposition}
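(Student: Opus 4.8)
The plan is to evaluate the isomorphism (\ref{metric to chow}) on $\widehat{\taut}^{\otimes 12}$, using the global section $\underline{\Delta}$ as the required nonzero rational section. Since $\underline{\Delta}$ trivializes $\taut^{\otimes 12}$ over all of $\mathcal{M}$, its divisor is $0$, so (\ref{metric to chow}) sends $\widehat{\taut}^{\otimes 12}$ to the arithmetic divisor $(0,\,-2\log\|\underline{\Delta}\|)$, where $\|\cdot\|$ denotes the $12$th tensor power of the Faltings metric on $\taut$. It therefore remains to check that $-2\log\|\underline{\Delta}(s)\|=\Phi(s)$ for every $s\in\mathcal{M}(\C)$.

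To this end I would first make $\underline{\Delta}(s)$ explicit. Realize $s$ as the image of a point $\tau\in\mathcal{H}$, and pick a $\Z$-basis $\omega_1,\omega_2$ of $L_s$ with $\tau=\omega_1/\omega_2$, so that $\mathcal{A}_s(\C)\iso\C/L_s\iso\C/(\Z\tau+\Z)$, the second isomorphism being $z\mapsto z/\omega_2$. In the trivialization of the commutative diagram preceding the proposition, $\Lie(A_\tau)$ is trivialized by the standard coordinate $u$ on $\C/(\Z\tau+\Z)$, so $\taut_{A_\tau}$ is generated by $du$ and $\underline{\Delta}(s)=(2\pi i)^{12}\,\Delta(\tau)\cdot(du)^{\otimes 12}$. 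Pulling $du$ back through $z\mapsto z/\omega_2$ produces $\omega_2^{-1}\,dz$ on $\C/L_s$; combining this with the identity $\Delta(L_s)=\omega_2^{-12}\,\Delta(\tau)$ yields
\[
\underline{\Delta}(s)=(2\pi i)^{12}\,\Delta(L_s)\cdot(dz)^{\otimes 12},
\]
with $dz$ the generator of $H^0(\mathcal{A}_s(\C),\Omega)$ attached to the coordinate $z$ on $\C/L_s$.

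Next I would compute the norm. By definition of the Faltings metric, $\|dz\|^{2}=\big|\int_{\C/L_s}dz\wedge d\overline{z}\big|$, so writing $I=\big|\int_{\C/L_s}dz\wedge d\overline{z}\big|$ the tensor-power metric gives $\|\underline{\Delta}(s)\|^{\otimes 12}=(2\pi)^{12}\,|\Delta(L_s)|\cdot I^{6}$. The key algebraic observation is that
\[
(2\pi)^{12}\,|\Delta(L_s)|\cdot I^{6}=\big(\,4\pi^{2}\,|\Delta(L_s)|^{1/6}\,I\,\big)^{6},
\]
since $(4\pi^{2})^{6}=(2\pi)^{12}$; applying $-2\log$ turns $-2\log\|\underline{\Delta}(s)\|$ into exactly $-12\log\big|\,4\pi^{2}\,\Delta(L_s)^{1/6}\int_{\C/L_s}dz\wedge d\overline{z}\,\big|=\Phi(s)$. (The branch ambiguity of $\Delta(L_s)^{1/6}$ is harmless as it occurs inside an absolute value, and $\Phi(s)$ does not depend on the choice of representative lattice: replacing $L_s$ by $\alpha L_s$ scales $\Delta(L_s)^{1/6}$ by $\alpha^{-2}$ and the period integral by $|\alpha|^{2}$, leaving the product unchanged in absolute value.)

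The substance of the argument — and the step most exposed to sign and exponent slips — is the second paragraph: one must correctly track the normalization $(2\pi i)^{12}$ built into the assignment $f\mapsto\underline{f}$, the homogeneity $\Delta(\alpha L)=\alpha^{-12}\Delta(L)$ that relates $\Delta(\tau)$ to $\Delta(L_s)$, and the exponent produced by the twelfth tensor power of the metric. It is precisely the resulting identity $\|dz\|^{12}=\big|\int_{\C/L_s}dz\wedge d\overline{z}\big|^{6}$ that makes the $\Delta^{1/6}$ and the $4\pi^{2}=(2\pi)^{2}$ in the statement come out on the nose; once these are aligned the final comparison is a one-line rearrangement.
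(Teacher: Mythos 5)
Your proposal is correct and follows essentially the same route as the paper's own proof: trivialize $\taut^{\otimes 12}$ by $\underline{\Delta}$, observe that the isomorphism therefore sends $\widehat{\taut}^{\otimes 12}$ to $(0,-2\log\|\underline{\Delta}\|)$, compute $\underline{\Delta}_s=\Delta(L_s)\cdot(2\pi i\, dz)^{\otimes 12}$, and unwind the Faltings metric. The only difference is that you spell out the derivation of the explicit formula for $\underline{\Delta}_s$ (via the uniformization $\C/L_s\iso\C/(\Z\tau+\Z)$, the pullback $du=\omega_2^{-1}dz$, and the homogeneity $\Delta(L_s)=\omega_2^{-12}\Delta(\tau)$), whereas the paper simply asserts it; this is sound bookkeeping and fills in a step the paper leaves to the reader. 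One cosmetic slip: the notation $\|\underline{\Delta}(s)\|^{\otimes 12}$ should just be $\|\underline{\Delta}(s)\|$ (the norm, taken in the $12$th tensor power bundle, is a scalar), but the computation it abbreviates is correct.
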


\begin{proof}
As $\underline{\Delta}$ defines a nowhere vanishing section of $\taut^{\otimes 12}$, 
 the isomorphism (\ref{metric to chow}) sends 
\[
\widehat{\taut}^{\otimes 12} \mapsto \big(0 , - 2\log|| \underline{\Delta} || \big).
\]
At any point $s\in \mathcal{M}(\C)$ the fiber
\[
\underline{\Delta}_s  \in \taut_s^{\otimes 12}  \iso H^0( \mathcal{A}_s(\C) ,\Omega_{\mathcal{A}_s/\C }^{\otimes 12} )
\]
is given explicitly by  
$
 \underline{\Delta}_s = \Delta(L_s) \cdot  ( 2\pi i \, dz)^{\otimes 12} ,
$
and so 
\[
|| \underline{\Delta}||_s   = (2\pi )^{12} \cdot    | \Delta(L_s) |   \cdot \Big| \int_{\C / L_s } dz\wedge d\overline{z} \, \Big|^{6}.
\]
The claim follows immediately.
\end{proof}

We can use the above proposition to give a formula for the Faltings height of any elliptic curve $A$ over a number field $\kk \subset \C$ with everywhere good reduction.   Let $s\in \mathcal{M}(\kk)$ be the corresponding point on the modular curve.  The assumption of good reduction implies that the corresponding morphism
$
\Spec(\kk) \to \mathcal{M}
$
extends to 
\[
\mathcal{Y} \define \Spec(\co_\kk) \to  \mathcal{M},
\]
and combining (\ref{elliptic faltings}) with Proposition \ref{prop:disc trivializes} gives
\begin{align}
h^\Falt(A) 
& = \frac{1}{12} \cdot  \frac{  [  \widehat{\taut}^{\otimes 12} : \mathcal{Y} ]  }{ [ \kk : \Q ] }  \nonumber  \\
& = \frac{1}{24 [\kk : \Q]} \sum_{ y\in \mathcal{Y}(\C) } \frac{ \Phi(y) }{ |\Aut(y) | }   \nonumber   \\ 
& =   \frac{-1}{ 2  [\kk : \Q] } 
  \sum_{\sigma :  \kk \to \C} \log  \Big|  4\pi^2 \Delta(L_{s^\sigma}) ^{\frac{1}{6} }   \int_{\C / L_{s^\sigma} } dz\wedge d\overline{z} \, \Big| .\label{simple faltings}
\end{align}
In the case where $A$ has complex multiplication, this formula simplifies even further.

\begin{proposition}
Let   $A$ be an elliptic curve defined over a number field $\kk$, and having everywhere good reduction.  
If  $A$ admits complex multiplication by $\co_E$, then
\[
 \frac{1}{24 h}  \sum_{ \mathfrak{a} \in \mathrm{CL}(E) }  \log \big|    \Delta(\mathfrak{a}) \Delta(\mathfrak{a}^{-1} )       \big| 
= 
-    h^\Falt ( A  )    -  \frac{1}{4} \log (d)  -\log(2\pi).
\]
\end{proposition}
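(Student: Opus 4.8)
The plan is to feed the explicit formula (\ref{simple faltings}) for $h^\Falt(A)$ directly into the Chowla--Selberg sum, using the classical theory of complex multiplication to show that the period lattices of the Galois conjugates of $A$ are, up to homothety, fractional $\co_E$-ideals distributed uniformly over $\mathrm{CL}(E)$. Since $h^\Falt(A)$ is unchanged under extension of $\kk$, and good reduction is preserved under base change, I first enlarge $\kk$ so that $\kk/\Q$ is Galois and contains the Hilbert class field $H$ of $E$; then the $\co_E$-action is defined over $\kk$, and for every $\sigma\colon\kk\to\C$ the lattice $L_{s^\sigma}$ with $A^\sigma(\C)\cong\C/L_{s^\sigma}$ is homothetic to a fractional $\co_E$-ideal, viewed in $\C$ via the fixed embedding $E\subset\C$ (the $\co_E$-action on $\Lie(A^\sigma)\cong\C$ goes through one of the two embeddings of $E$, so $L_{s^\sigma}$ is a rank-one module over that copy of $\co_E$).

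The key step is to count these homothety classes with multiplicity. Decompose $\{\sigma\colon\kk\to\C\}=\Gal(\kk/E)\sqcup c\cdot\Gal(\kk/E)$, with $c$ complex conjugation. For $\sigma\in\Gal(\kk/E)$ one has $j(A^\sigma)=\sigma(j(A))$, and the main theorem of complex multiplication for elliptic curves ($H=E(j(\mathfrak{a}))$, with $\Gal(H/E)\cong\mathrm{CL}(E)$ acting simply transitively on the $j$-invariants $j(\mathfrak{a})$) shows that as $\sigma$ ranges over $\Gal(\kk/E)$ the homothety class of $L_{s^\sigma}$ ranges over all of $\mathrm{CL}(E)$, each class attained by exactly $[\kk:H]$ embeddings. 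For $\sigma=c\tau$ one has $A^{c\tau}=\overline{A^\tau}$, so $L_{s^{c\tau}}$ is homothetic to the complex-conjugate ideal of $L_{s^\tau}$, whose class is the inverse of that of $L_{s^\tau}$; as $\tau$ runs over $\Gal(\kk/E)$ these inverse classes again sweep out $\mathrm{CL}(E)$ uniformly. Altogether the multiset of homothety classes of the $L_{s^\sigma}$ is $\mathrm{CL}(E)$, each class appearing $2[\kk:H]$ times.

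Substituting into (\ref{simple faltings}), two elementary facts finish the proof. First, $\bigl|\Delta(L)^{1/6}\int_{\C/L}dz\wedge d\overline{z}\bigr|$ depends only on the homothety class of $L$, because $\Delta(\lambda L)=\lambda^{-12}\Delta(L)$ while $\int_{\C/\lambda L}dz\wedge d\overline{z}=|\lambda|^2\int_{\C/L}dz\wedge d\overline{z}$; hence the sum over $\sigma$ is genuinely a sum over ideal classes. Second, for a fractional ideal $\mathfrak{a}$ one has $\bigl|\int_{\C/\mathfrak{a}}dz\wedge d\overline{z}\bigr|=2\,\mathrm{covol}(\mathfrak{a})=N(\mathfrak{a})\sqrt{d}$, since $\mathrm{covol}(\co_E)=\sqrt{d}/2$. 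Using $[\kk:\Q]=2h[\kk:H]$, formula (\ref{simple faltings}) becomes
\[
h^\Falt(A)=\frac{-1}{2h}\sum_{[\mathfrak{a}]\in\mathrm{CL}(E)}\log\Bigl|4\pi^2\,\Delta(\mathfrak{a})^{1/6}\int_{\C/\mathfrak{a}}dz\wedge d\overline{z}\Bigr|.
\]
Symmetrizing the sum under $[\mathfrak{a}]\mapsto[\mathfrak{a}]^{-1}$ and using $N(\mathfrak{a})N(\mathfrak{a}^{-1})=1$ turns the summand into $\tfrac12\log\bigl(16\pi^4 d\cdot|\Delta(\mathfrak{a})\Delta(\mathfrak{a}^{-1})|^{1/6}\bigr)$; separating off the constant term and invoking the identity $\tfrac14\log(16\pi^4 d)=\log(2\pi)+\tfrac14\log(d)$ then yields exactly the asserted formula. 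The one genuinely substantive point is the complex-multiplication bookkeeping of the second paragraph — getting the permutation of the period lattices and their multiplicities right so that the combination $\Delta(\mathfrak{a})\Delta(\mathfrak{a}^{-1})$ appears; the remainder is manipulation of homothety factors, a covolume computation, and the logarithm identity above. (Reassuringly, summing over the whole class group makes $\Delta(\mathfrak{a})\Delta(\mathfrak{a}^{-1})$ and $\Delta(\mathfrak{a})^2$ contribute the same total, so even a slightly imprecise count would land in the right place — but it is worth pinning the bijection down.)
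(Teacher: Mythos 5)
Your proof is correct and follows essentially the same route as the paper: enlarge $\kk$ to a Galois extension containing the Hilbert class field, invoke the main theorem of complex multiplication to show the period lattices of the Galois conjugates sweep out the ideal class group uniformly (with complex conjugation inverting classes), and substitute into (\ref{simple faltings}). The only cosmetic difference is in the final bookkeeping: the paper pairs $\sigma$ with $c\circ\sigma$ and makes a coherent choice of fractional-ideal representatives so that $\mathfrak{a}_{c\circ\sigma}=\overline{\mathfrak{a}_\sigma}$, producing $\Delta(\mathfrak{a})\Delta(\mathfrak{a}^{-1})$ directly, whereas you first use the homothety invariance of $\bigl|\Delta(L)^{1/6}\int_{\C/L}dz\wedge d\overline{z}\bigr|$ to collapse to a sum over classes and then symmetrize under $[\mathfrak{a}]\mapsto[\mathfrak{a}]^{-1}$ — the same computation rearranged.
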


\begin{proof}
After enlarging  $\kk$ we are free  to assume that $\kk/\Q$ is Galois.
A choice of embedding $\kk \hookrightarrow \C$ then  identifies 
\[
\Gal(\kk/\Q) \iso \{ \mbox{embeddings }\kk\to \C \}.
\] 
It follows from the theory of complex multiplication that  $\kk$ contains the Hilbert class field $H$ of $E$, and that 
$A\iso A^\sigma$ for every $\sigma \in \Gal(\kk/H)$.

Every  $\sigma\in \Gal(\kk/\Q)$ determines  a class $\mathfrak{a}_\sigma \in \mathrm{CL}(E)$ characterized by 
\[
A^\sigma(\C) \iso \C / \mathfrak{a}_\sigma.
\]
The theory of complex multiplication implies that the resulting function $\Gal(\kk/\Q) \to \mathrm{CL}(E)$ factors through a surjective two-to-one function
\[
\Gal( H/\Q) \to \mathrm{CL}(E).
\]
If $c\in \Gal(H/\Q)$ is complex conjugation,  then $\mathfrak{a}_{c\circ \sigma}$ is the complex conjugate of $\mathfrak{a}_\sigma$ in the ideal class group.  

We now fix, for every $\sigma \in \Gal(H/\Q)$ a fractional ideal $\mathfrak{a}_\sigma \subset E$ such that $\mathcal{A}^\sigma(\C) \iso \C / \mathfrak{a}_\sigma$,  and do this in such a way that  $\mathfrak{a}_{c\circ \sigma}$ is the complex conjugate of  $\mathfrak{a}_\sigma$ in $E$.
This implies 
\[
 \Delta( \mathfrak{a}_{c\circ \sigma} ) = \mathrm{N}(\mathfrak{a}_\sigma)^{-12}  \Delta( \mathfrak{a}^{-1}_{ \sigma} ),
\]
and the formula (\ref{simple faltings}) simplifies to 
\begin{align*}
h^\Falt ( A  ) 
&=  \frac{-1}{ 2[ H :\Q] }  \sum_{ \sigma \in \Gal(H/\Q)  }
\log \big|   4\pi^2  \Delta(\mathfrak{a}_\sigma)^{\frac{1}{6}} \int_{ \C   /  \mathfrak{a}_\sigma } dz \wedge d\overline{z}\,  \big| \\
&=  \frac{-1}{ 2[ H:\Q] }  \sum_{ \sigma \in \Gal(H/\Q) }
\log \big|  4\pi^2   d^{\frac{1}{2}}  \Delta(\mathfrak{a}_\sigma)^{\frac{1}{6}} \mathrm{N}(\mathfrak{a}_\sigma)    \big|  \\
&=  \frac{-1}{ [ H:\Q] }  \sum_{ \sigma \in \Gal(H/\kk) }
\log \big|   4\pi^2  d^{\frac{1}{2}}  \Delta(\mathfrak{a}_\sigma)^{\frac{1}{12}}  \Delta(\mathfrak{a}_{c\circ \sigma})^{\frac{1}{12}}    \mathrm{N}(\mathfrak{a}_\sigma)    \big|  \\
&=  \frac{-1}{2h}  \sum_{ \mathfrak{a} \in \mathrm{CL}(E) }
\log \big|   4\pi^2   d^{\frac{1}{2}}  \Delta(\mathfrak{a})^{\frac{1}{12}}  \Delta(\mathfrak{a}^{-1} )^{\frac{1}{12}}       \big| .\end{align*}
This is equivalent to the stated formula.
\end{proof}

Combining the preceding proposition with (\ref{better CS})  yields the Chowla-Selberg formula in the form (\ref{new cs}).


\section{Colmez's conjecture}
\label{s:colmez}


This section contains a general discussion of CM algebras and CM abelian varieties,  including the statement of Colmez's conjecture in full generality.  

There is only one  new idea.  
We will show in Corollary \ref{cor:reflex height} that the averaged Faltings height appearing in Theorem \ref{thm:average colmez} can be rewritten  as the Faltings height of a \emph{single} CM abelian variety with  a  very particular CM type $(E^\sharp,\Phi^\sharp)$.  
Such abelian varieties  will later appear in the Kuga-Satake family over an orthogonal Shimura variety.


\subsection{CM fields}
\label{ss:reflex}


Before discussing abelian varieties with complex multiplication, we recall some basic facts about CM fields  and CM types.
We also introduce  the notions  of total reflex algebra and total reflex type of a CM field.

\begin{definition}
The following terminology is standard:
\begin{enumerate}
\item
A \emph{CM field}   is a totally imaginary quadratic extension of a totally real number field.
\item
A \emph{CM algebra} is a finite product of CM fields.
\item
A \emph{CM type} of a CM algebra $E$ is a set
\[
\Phi \subset \Hom(E ,\C) 
\]
of $\Q$-algebra maps such that $\Phi \sqcup \overline{\Phi} = \Hom(E, \C)$.  
\item
A \emph{CM pair} is a pair $(E,\Phi)$ consisting of a CM algebra $E$ and a CM type $\Phi\subset \Hom(E,\C)$.
\end{enumerate}
\end{definition}

\begin{remark}
Suppose $E = E_1\times \cdots \times E_r$ is a CM algebra, with each factor $E_i$  a CM field.
There is a canonical identification 
\[
\Hom(E,\C)  = \Hom(E_1,\C)  \sqcup  \cdots \sqcup \Hom(E_r,\C) ,
\]
and every CM type of $E$ has the form 
\[
\Phi = \Phi_1 \sqcup \cdots \sqcup \Phi_r
\]
for $\Phi_i$  a CM type of $E_i$.
\end{remark}

Let $\Q^\alg \subset \C$ be the algebraic closure of $\Q$ in $\C$, and set $G_\Q = \Gal(\Q^\alg/\Q)$.
The following is an exercise in Galois theory.

\begin{proposition}\label{prop:Gsets}
The construction
\[
B \mapsto \Hom(B,\C)
\] 
establishes  a bijection  between the set of isomorphism classes of finite \'etale $\Q$-algebras,  and the set of isomorphism classes of 
finite sets with a continuous action of $G_\Q$.
\end{proposition}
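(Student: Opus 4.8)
The plan is to establish the bijection by constructing functors in both directions and checking they are mutually inverse, which is exactly the anti-equivalence at the heart of Grothendieck's Galois theory specialized to the base field $\Q$. First I would observe that both sides are just sets of isomorphism classes, so there is no subtlety about naturality to worry about; I only need a well-defined map each way and the verification that the round trips are the identity on isomorphism classes.

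In the forward direction, given a finite \'etale $\Q$-algebra $B$, the set $\Hom(B,\C)$ of $\Q$-algebra homomorphisms carries the $G_\Q$-action $\sigma \cdot f = \sigma \circ f$. I would first reduce to the case where $B=K$ is a field: a finite \'etale $\Q$-algebra decomposes as a finite product $B = K_1 \times \cdots \times K_r$ of finite separable field extensions, $\Hom(B,\C) = \bigsqcup_i \Hom(K_i,\C)$, and the $G_\Q$-action respects this decomposition, so it suffices to handle each factor. For a finite separable $K/\Q$, every $\Q$-embedding $K\to\C$ lands in $\Q^\alg$ (since $K$ is algebraic over $\Q$), so $\Hom(K,\C)=\Hom(K,\Q^\alg)$, and by the primitive element theorem together with the fact that a polynomial has as many roots in $\Q^\alg$ as its degree this set is nonempty and finite of cardinality $[K:\Q]$; the $G_\Q$-action is transitive because $G_\Q$ acts transitively on the roots of the minimal polynomial of a primitive element, and it is continuous because the stabilizer of one embedding is the open subgroup $\Gal(\Q^\alg/f(K))$. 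Conversely, given a finite set $S$ with continuous transitive $G_\Q$-action, pick $s\in S$, let $H=\mathrm{Stab}(s)$, which is open (hence of finite index) in $G_\Q$, and set $K = (\Q^\alg)^H$; then $K/\Q$ is finite separable with $[K:\Q]=[G_\Q:H]=|S|$, and a non-transitive $S$ is handled by taking the product of the fields attached to its orbits. This produces the inverse functor.

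The remaining task is to check the two round trips. Starting from $B$ and forming $(\Q^\alg)^{\mathrm{Stab}(f)}$ for each $G_\Q$-orbit (equivalently each field factor) recovers $B$ up to isomorphism: for $B=K$ a field and $f\in\Hom(K,\Q^\alg)$, the fixed field of $\mathrm{Stab}(f)=\Gal(\Q^\alg/f(K))$ is $f(K)\cong K$. Starting from $S$, forming $K=(\Q^\alg)^H$ and then $\Hom(K,\Q^\alg)$ recovers $S$ as a $G_\Q$-set: the evaluation-of-cosets map $G_\Q/H \to \Hom(K,\Q^\alg)$, $\sigma H \mapsto \sigma|_K$, is a well-defined $G_\Q$-equivariant bijection by the counting above, and $G_\Q/H\cong S$ as $G_\Q$-sets by the orbit-stabilizer correspondence. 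Assembling orbits/factors gives the statement for general $B$ and $S$.

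The main obstacle is not conceptual but bookkeeping: one must be a little careful that ``finite \'etale $\Q$-algebra'' is genuinely the same as ``finite product of finite separable extensions of $\Q$'' (automatic in characteristic zero, where every finite extension is separable, so \emph{every} finite-dimensional commutative $\Q$-algebra that is reduced is \'etale and these are exactly the finite products of number fields), and that the continuity hypothesis on the $G_\Q$-action is precisely what forces stabilizers to be open and hence the associated fixed fields to be finite over $\Q$ — drop continuity and the correspondence fails. Since the excerpt flags this as ``an exercise in Galois theory,'' I would present the argument compactly, emphasizing the orbit decomposition on both sides and the fixed-field/stabilizer dictionary, and leave the routine verifications of functoriality and equivariance to the reader.
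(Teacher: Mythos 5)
Your proof is correct and is exactly the standard Grothendieck--Galois-theory argument that the paper has in mind; the paper itself gives no proof, simply labeling the statement ``an exercise in Galois theory.'' Your orbit/factor decomposition, stabilizer--fixed-field dictionary, and round-trip checks are the intended exercise, carried out cleanly.
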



Suppose  $E$ is a CM field, and choose an embedding  $\iota_0 : E \to \C$.  To this data we can associate a 
\emph{total reflex pair}  $(E^\sharp,\Phi^\sharp)$ as follows.  First note that   the group $G_\Q$ acts on the set of  all CM types of $E$ by
\[
g \circ \Phi = \{ g \circ \varphi : \varphi \in \Phi \}.
\]
By Proposition \ref{prop:Gsets} there is a finite \'etale $\Q$-algebra $E^\sharp$ characterized by
\[
\{ \mbox{CM types of $E$} \}  \iso \Hom(E^\sharp , \C )
\]
as finite sets with $G_\Q$-actions.  
 The embedding $\iota_0$  determines a subset
\[
\Phi^\sharp = \{ \mbox{CM types of $E$ containing $\iota_0$} \}   \subset   \{ \mbox{CM types of $E$} \} \iso \Hom(E^\sharp , \C ).
\]
We call $E^\sharp$ and $\Phi^\sharp$ the \emph{total reflex algebra} and  \emph{total reflex type}, respectively.

\begin{proposition}\label{prop:reflex pair}
Fix a CM field $E$ and an embedding  $\iota_0:E \to \C$.
\begin{enumerate}
\item
The total reflex pair $(E^\sharp,\Phi^\sharp)$ is CM pair.
\item
There are  natural homomorphisms 
\[
 \mathrm{Nm}^\sharp : E^\times \to ( E^\sharp)^\times ,\qquad 
 \mathrm{Tr}^\sharp : E\to E^\sharp,
\]
called the \emph{total reflex norm} and \emph{total reflex trace}, respectively.
\item
If we hold $E$ fixed but change $\iota_0$, the CM algebra $E^\sharp$ is unchanged, and $\Phi^\sharp$ is replaced by a CM type in the same $G_\Q$-orbit.
\end{enumerate}
\end{proposition}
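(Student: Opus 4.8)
The plan is to run the whole argument at the level of finite $G_\Q$-sets, manipulating $E^\sharp$ only through the $G_\Q$-set $X=\{\text{CM types of }E\}$ of Proposition~\ref{prop:Gsets} and the involution $\Phi\mapsto\overline\Phi$ it carries. For part~(1) the single nontrivial input is the elementary fact that $E$ being a CM field is equivalent to the identity $\varphi\circ c_E=\overline\varphi$ for every embedding $\varphi:E\to\C$, where $c_E\in\Gal(E/F)$ is the canonical complex conjugation, $\overline\varphi=c\circ\varphi$, and $c\in G_\Q$ is complex conjugation on $\Q^\alg\subset\C$. From this I would record four properties of $\Phi\mapsto\overline\Phi$ on $X$: it agrees with the action of $c\in G_\Q$ on $X$; it is $G_\Q$-equivariant (this is exactly where the CM identity enters, via $g\circ\overline\varphi=\overline{g\circ\varphi}$); it is fixed-point-free (since $\Phi\cap\overline\Phi=\emptyset$); and it preserves each $G_\Q$-orbit of $X$ (since $\overline\Phi=c\cdot\Phi$).

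Granting these, I would decompose $X$ into $G_\Q$-orbits and use Proposition~\ref{prop:Gsets} and its functoriality to write $E^\sharp=\prod_j E^\sharp_j$ with each factor a number field carrying a fixed-point-free involution $c^\sharp_j$ inherited from that on $X$. The fixed subfield $F^\sharp_j$ corresponds to the orbit modulo this involution, is acted on trivially by $c$ and hence totally real, and lies inside the totally imaginary field $E^\sharp_j$ with relative degree $2$ because the orbit maps two-to-one onto its quotient; thus each $E^\sharp_j$, and hence $E^\sharp$, is a CM algebra. For the CM type, since every CM type of $E$ contains exactly one of $\iota_0,\overline{\iota_0}$ one computes $\overline{\Phi^\sharp}=\{\text{CM types containing }\overline{\iota_0}\}$, whence $\Phi^\sharp\sqcup\overline{\Phi^\sharp}=X=\Hom(E^\sharp,\C)$ and $(E^\sharp,\Phi^\sharp)$ is a CM pair.

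For part~(2) I would use the identification $E^\sharp\otimes_\Q\Q^\alg\iso\prod_{\Phi\in X}\Q^\alg$, under which $E^\sharp$ consists of the Galois-stable families $(z_\Phi)_{\Phi\in X}$, meaning $z_{g\Phi}=g(z_\Phi)$, and $(E^\sharp)^\times$ of those with every $z_\Phi\neq 0$, and simply set
\[
\mathrm{Nm}^\sharp(x)=\Big(\prod_{\varphi\in\Phi}\varphi(x)\Big)_{\Phi\in X},\qquad \mathrm{Tr}^\sharp(x)=\Big(\sum_{\varphi\in\Phi}\varphi(x)\Big)_{\Phi\in X}.
\]
Galois-stability of the right-hand sides is immediate from $g\Phi=\{g\circ\varphi:\varphi\in\Phi\}$, so these land in $(E^\sharp)^\times$ and $E^\sharp$ respectively, and multiplicativity of $\mathrm{Nm}^\sharp$ and additivity (indeed $\Q$-linearity) of $\mathrm{Tr}^\sharp$ are visible from the formulas. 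For part~(3), any other embedding is $\iota_0'=g\circ\iota_0$ for some $g\in G_\Q$, since $G_\Q$ acts transitively on $\Hom(E,\C)$; the $G_\Q$-set $X$, and therefore $E^\sharp$, does not involve $\iota_0$ at all, while $\{\text{CM types containing }\iota_0'\}=g\cdot\{\text{CM types containing }\iota_0\}$, so $\Phi^\sharp$ is replaced by $g\cdot\Phi^\sharp$, which lies in the same $G_\Q$-orbit.

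The step I expect to demand the most care is the passage in part~(1) from ``$E^\sharp$ is a totally imaginary \'etale algebra'' to ``$E^\sharp$ is a CM algebra'': this rests entirely on the $G_\Q$-equivariance of the conjugation involution on $X$, and one must simultaneously produce the totally real subalgebra and verify that the relative degree is exactly $2$ in each factor. Everything after that is bookkeeping.
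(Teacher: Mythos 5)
Your proof is correct, but it takes a genuinely different route from the paper. The paper proceeds by choosing $G_\Q$-orbit representatives $\Phi_1,\ldots,\Phi_r$, identifying $E^\sharp\iso E_1'\times\cdots\times E_r'$ with the product of the classical reflex fields $E_i'=(\Q^\alg)^{\mathrm{Stab}(\Phi_i)}$, and then importing from the classical theory of reflex CM types the facts that each $(E_i',\Phi_i')$ is a CM pair and that the reflex norm and reflex trace $\mathrm{Nm}_{\Phi_i}$, $\mathrm{Tr}_{\Phi_i}$ exist; the total reflex norm and trace are built from these, and part~(3) is dismissed as elementary. You instead stay entirely inside the $G_\Q$-set framework of Proposition~\ref{prop:Gsets}: you isolate the fixed-point-free, orbit-preserving, $G_\Q$-equivariant involution $\Phi\mapsto\overline\Phi$ on $X$ (the $G_\Q$-equivariance resting, correctly, on the CM identity $c\circ\varphi=\varphi\circ c_E$), and read off the CM structure of $E^\sharp$ directly — each orbit gives a totally imaginary field, the $\iota$-quotient of the orbit gives the totally real subfield, and the two-to-one counting gives relative degree $2$. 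Your construction of $\mathrm{Nm}^\sharp$ and $\mathrm{Tr}^\sharp$ via explicit Galois-stable families in $\prod_{\Phi\in X}\Q^\alg$ is a transparent replacement for the paper's appeal to the classical $\mathrm{Nm}_{\Phi_i}$ and $\mathrm{Tr}_{\Phi_i}$. The trade-off: the paper's proof is shorter and ties $(E^\sharp,\Phi^\sharp)$ to the familiar reflex data, which is useful context elsewhere in the subject, while your proof is self-contained and makes visible exactly where the CM hypothesis on $E$ is used.
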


\begin{proof}
The third claim is elementary, and left to the reader.
To prove the first two claims, we relate our total reflex pair to the classical notions of reflex field and reflex type.

Fix  $G_\Q$-orbit representatives
\[
\Phi_1,\ldots, \Phi_r \in \{\mbox{CM types of $E$} \},
\] 
and let $\mathrm{Stab}(\Phi_i) \subset G_\Q$ be the stabilizer of $\Phi_i$.
For each   CM  pair $(E,\Phi_i)$ let $(E_i^\prime , \Phi_i^\prime)$  be the classical reflex  CM pair, defined by 
\[
E_i^\prime = \{ x \in \Q^\alg : \sigma(x) =x ,\, \forall \sigma \in  \mathrm{Stab}(\Phi_i)\}.
\]
and 
\[
\Phi_i^\prime = \{ \sigma^{-1}|_{E_i^\prime} : \sigma \in G_\Q, \,   \sigma \circ \iota_0 \in \Phi_i \}  \subset \Hom( E_i^\prime ,\C ).
\]
It is an exercise in Galois theory to show that there is an isomorphism of $\Q$-algebras
\[
E^\sharp \iso  E_1^\prime  \times \cdots \times E_r' 
\] 
such that the natural bijection
\[
\Hom(E^\sharp , \C) \iso  \Hom( E_1^\prime ,\C)\sqcup \cdots \sqcup\Hom( E_r^\prime ,\C)
\]
identifies $\Phi^\sharp = \Phi_1 ^\prime\sqcup \cdots \sqcup \Phi_r^\prime$.

It is known from the classical theory that each $(E',\Phi')$ is a CM type, and hence so is $(E^\sharp,\Phi^\sharp)$.
Moreover,  the classical theory provides homomorphsms
\begin{align*}
\mathrm{Nm}_{\Phi_i} &= \prod_{\varphi\in \Phi_i} \varphi : E^\times \to ( E_i' )^\times \\
\mathrm{Tr}_{\Phi_i}  &= \sum_{\varphi\in \Phi_i} \varphi : E  \to  E_i' ,
\end{align*}
and the total reflex norm and total reflex trace are constructed from these in the obvious way using the isomorphism
$E^\sharp \iso  E_1^\prime  \times \cdots \times E_r' $.
\end{proof}


\subsection{CM abelian varieties}


Suppose $A$ is an abelian variety  defined over a number field $\kk$, and assume that $A$ extends to a semi-abelian scheme
\[
A \to \Spec(\co_\kk)
\]
 in the sense of \cite{BLR}.   For example, if $A$ has everywhere good reduction then its N\'eron model provides such an extension.

The Faltings height of $A$ is defined exactly as in the case of an elliptic curve.
Pick a nonzero rational section $\eta$ of the line bundle $\pi_*\Omega^{\dim(A)}_{A/\co_\kk }$ on $\Spec(\co_\kk)$.
Define the \emph{archimedean part of the Faltings height}
\[
h^\Falt _\infty ( A, \eta ) = \frac{-1}{ 2 [\kk:\Q] }  \sum_{ \sigma : \kk \to \C}
\log \big|   \int_{ A^\sigma(\C) } \eta^\sigma \wedge \overline{\eta^\sigma}\,  \big|,
\]
where $A^\sigma$ is the base change of $A$ via $\sigma : \kk \to \C$, and similarly for $\eta^\sigma$.
Define the \emph{finite part of the Faltings height}
\[
h^\Falt_f(A,\eta) = \frac{1}{  [\kk:\Q] }  \sum_{ \mathfrak{p} \subset \co_\kk}  \ord_\mathfrak{p}(\eta) \cdot  \log ( \mathrm{N}(\mathfrak{p})  ) ,
\]
where $\ord_\mathfrak{p}$ is defined by choosing an isomorphism of $\co_{\kk,\mathfrak{p}}$-modules
\[
H^0\big( \Spec(\co_\kk) , \pi_*\Omega^{\dim(A)}_{A/\co_\kk } \big) \otimes_{\co_\kk} \co_{\kk,\mathfrak{p}} \iso \co_{\kk,\mathfrak{p}}.
\]
The \emph{Faltings height}
\begin{equation}\label{faltings def}
h^\Falt(A) = h^\Falt_f(A, \eta) + h^\Falt _\infty ( A,\eta) 
\end{equation}
is independent of  the choice of  $\eta$, and is unchanged if we enlarge the number field $\kk$.  This allows us to define the Faltings height of any abelian variety over $\Q^\alg$, by choosing a model over a number field with everywhere semi-abelian reduction.

Now fix a CM algebra $E$ of dimension $2d$, and assume that $A$ is an abelian variety of dimension $d$ defined over  $\C$,  equipped with an injective ring homomorphism
\begin{equation}\label{CM action}
E \to \End(A) \otimes_\Z \Q.
\end{equation}
Such a homomorphism endows the homology group  $H_1(A(\C) , \Q)$ with the structure of a free $E$-module of rank one.  Using this and the Hodge decomposition
\[
H_1(A(\C) , \C) \iso \Lie(A) \oplus \overline{ \Lie(A) },
\]
it is easy to see that there is a unique CM type $\Phi$ such that 
\[
\Lie(A) \iso \prod_{\varphi \in \Phi} \C_\varphi
\]
as $E\otimes_\Q \C$ modules, where $\C_\varphi=\C$ with $E$ acting through $\varphi:E\to\C$.  

In the situation above, we say that $A$ has \emph{complex multiplication  of type $(E,\Phi)$}. 
If (\ref{CM action}) restricts to a homomorphism $\co_E \to \End(A)$,
we say that $A$ has \emph{complex multiplication of type $(\co_E,\Phi)$}.

The following proposition is standard.  See for example the statement and proof of  Proposition 1.1 in Chapter 5 of \cite{La83}.

\begin{proposition}
Suppose $A$ is an abelian variety over $\C$ with  complex multiplication of type $(E,\Phi)$.
\begin{enumerate}
\item There is a model of $A$ defined over a number field, and this model may be chosen to have everywhere good reduction.
\item Any other abelian variety over $\C$ with complex multiplication of type $(E,\Phi)$ is $E$-linearly isogenous to $A$.
\end{enumerate}
\end{proposition}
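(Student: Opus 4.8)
The plan is to reduce both assertions to the complex-analytic description of CM abelian varieties, supplemented by two structural facts: the algebraicity of CM moduli points and the potential good reduction of abelian varieties with complex multiplication. I begin with part (2), which is purely a matter of uniformization. Suppose $A'$ is a second abelian variety over $\C$ with complex multiplication of type $(E,\Phi)$. As recalled just before the proposition, the $E$-action makes each of $H_1(A(\C),\Q)$ and $H_1(A'(\C),\Q)$ a free $E$-module of rank one, and the Hodge decomposition forces the complex structure on $H_1(A(\C),\R)\iso E\otimes_\Q\R$, respectively on $H_1(A'(\C),\R)$, to be exactly the one determined by $\Phi$. Fixing $E$-module identifications $H_1(A(\C),\Q)\iso E\iso H_1(A'(\C),\Q)$ therefore identifies the two universal covers compatibly with their complex structures and exhibits
\[
A(\C)\iso (E\otimes_\Q\R)/\Lambda ,\qquad A'(\C)\iso (E\otimes_\Q\R)/\Lambda'
\]
for $\co_E$-stable lattices $\Lambda,\Lambda'\subset E$. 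Any two full-rank $\Z$-lattices in the $\Q$-vector space $E$ are commensurable, so $M\Lambda\subset\Lambda'$ for some positive integer $M$; multiplication by $M$ is $\C$-linear for the common complex structure and $E$-linear, and descends to an $E$-linear isogeny $A\to A'$.

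For part (1) I would argue in two steps, the first being that $A$ admits a model over $\Q^\alg$. The crucial input is a finiteness statement: a complex torus of the form $(E\otimes_\Q\R)/\Lambda$ with complex structure given by a CM type and $\Lambda$ a projective $\co_E$-module of rank one is determined up to $\co_E$-linear isomorphism by the class of $\Lambda$ in $\Pic(\co_E)$ and by the (finitely many) CM types, so there are only finitely many abelian varieties over $\C$ with complex multiplication by $\co_E$, up to isomorphism (and the same count applies verbatim if $A$ has complex multiplication only by a non-maximal order). Now rigidify: choose a polarization $\lambda$ of some degree $\delta$ and a level-$N$ structure with $N\ge 3$, so that the triple $(A,\lambda,\text{level})$ defines a complex point $x$ of a Siegel moduli scheme $\mathcal{A}_{d,\delta,N}$ of finite type over $\Q$. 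For $\sigma\in\Aut(\C/\Q^\alg)$ the conjugate $x^\sigma$ corresponds to $(A^\sigma,\lambda^\sigma,\text{level}^\sigma)$, where $A^\sigma$ again carries complex multiplication by $\co_E$; by the finiteness above there are only finitely many isomorphism classes of such $A^\sigma$, and, as there are only finitely many polarizations of degree $\delta$ and finitely many level-$N$ structures on a fixed abelian variety up to automorphism, the orbit $\{x^\sigma\}$ is finite. A complex point of a scheme of finite type over $\Q^\alg$ with finite $\Aut(\C/\Q^\alg)$-orbit is algebraic, so $x\in\mathcal{A}_{d,\delta,N}(\Q^\alg)$; pulling back the universal object yields a model of $A$ over $\Q^\alg$, and spreading out gives a model over a number field $\kk$.

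The second step is that, after replacing $\kk$ by a finite extension, one may arrange everywhere good reduction. Fix a finite place $v$ of $\kk$ and a prime $\ell$ different from the residue characteristic of $v$. The local Galois group $\Gal(\overline{\kk}_v/\kk_v)$ acts on the $\ell$-adic Tate module $E\otimes_\Q\Q_\ell$-linearly and of rank one, hence through the commutative group $(E\otimes_\Q\Q_\ell)^\times$. This group has no nontrivial unipotent elements, so Grothendieck's quasi-unipotence theorem forces the inertia subgroup to act through a finite quotient, and the N\'eron--Ogg--Shafarevich criterion then gives good reduction at $v$ after a finite base change. Carrying this out at the finitely many places of bad reduction and passing to the compositum produces the required model.

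The step I expect to be the real obstacle is the first half of (1): descending $A$ to $\Q^\alg$. This is the only genuinely non-formal ingredient --- morally, the algebraicity of CM points --- and its proof rests on the finiteness, up to isomorphism, of CM abelian varieties of a given type together with the rigidity supplied by a moduli interpretation. The uniformization argument for (2) and the potential-good-reduction argument are routine once this is in hand. As the statement records, all of this is classical, and a clean self-contained treatment may be found in Chapter 5 of \cite{La83}.
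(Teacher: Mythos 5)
The paper offers no proof of this proposition: it simply records that the statement is standard and refers the reader to Proposition~1.1 in Chapter~5 of \cite{La83}. Your argument is the standard one and is correct in all essentials: part (2) via the uniformization $(E\otimes_\Q\R)_\Phi/\Lambda$ and commensurability of full-rank $\Z$-lattices in $E$; part (1) by combining finiteness of CM abelian varieties of a given type (hence a finite $\Aut(\C/\Q^\alg)$-orbit of the corresponding point on a fine Siegel moduli scheme, hence algebraicity) with potential good reduction (commutativity of the Galois image in $(E\otimes\Q_\ell)^\times$, Grothendieck's quasi-unipotence, and N\'eron--Ogg--Shafarevich). One small slip: since CM of type $(E,\Phi)$ here means only a map $E\to\End(A)\otimes_\Q\Q$, the lattice $\Lambda\subset E$ need not be $\co_E$-stable (only stable under some order); this is harmless for your argument, which uses only commensurability, but the phrase ``$\co_E$-stable lattices'' should be ``full-rank $\Z$-lattices,'' and for the finiteness in (1) the orbit argument needs (as you note parenthetically) that conjugation by $\sigma\in\Aut(\C/\Q^\alg)$ preserves the order $E\cap\End(A)$, which it does.
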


\begin{theorem}[Colmez \cite{Colmez}]\label{thm:colmez}
Suppose $A$ is an abelian variety over $\Q^\alg$ with  complex multiplication of type $(\co_E,\Phi)$. The Faltings height 
\[
 h^\Falt_{ (E,\Phi) } =  h^\Falt(A)
\]
 depends only on the pair $(E,\Phi)$, and not on $A$ itself.  It is unchanged if $\Phi$ is replaced by a CM type in the same $G_\Q$-orbit.\end{theorem}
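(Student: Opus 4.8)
The plan is to deduce the theorem from two facts: the invariance of the Faltings height under the Galois action, which is essentially formal, and an explicit computation of how the Faltings height changes under an $\co_E$-linear isogeny between abelian varieties with complex multiplication by the \emph{maximal} order $\co_E$.

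First I would dispose of the Galois-invariance statement and set up a convenient base field. Fix $A/\Q^\alg$ with complex multiplication of type $(\co_E,\Phi)$ and choose a model over a number field $\kk\subset\C$; by the preceding proposition we may take $\kk$ to be Galois over $\Q$, to contain all conjugates of $E$, and such that $A$ has everywhere good reduction, so the $\co_E$-action extends to the N\'eron model $\mathcal A\to\Spec(\co_\kk)$, an abelian scheme. For $\sigma\in\Gal(\kk/\Q)$ the conjugate $A^\sigma$ again carries an injection $\co_E\to\End(A^\sigma)$, and unwinding the definition of the CM type shows that $A^\sigma$ has complex multiplication of type $(\co_E,\Phi^\sigma)$ with $\Phi^\sigma$ in the $G_\Q$-orbit of $\Phi$, every member of the orbit arising in this way. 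But replacing $(A,\eta)$ by $(A^\sigma,\eta^\sigma)$ merely permutes the sum over embeddings $\kk\to\C$ defining $h^\Falt_\infty$ and the sum over primes of $\co_\kk$ defining $h^\Falt_f$, so $h^\Falt(A)=h^\Falt(A^\sigma)$. This proves the last sentence of the theorem, and henceforth I may work over such a $\kk$.

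It remains to show that $h^\Falt(A)$ does not depend on $A$ within a fixed type $(\co_E,\Phi)$. Since $\co_E$ is Dedekind, $H_1(A(\C),\Z)$ is a torsion-free rank-one $\co_E$-module, hence a fractional ideal $\mathfrak a$, and the Hodge decomposition gives $A(\C)\iso\C^\Phi/\Phi(\mathfrak a)$; any other $A'$ of the same type is described by an ideal $\mathfrak a'$. After rescaling $\mathfrak a'$ by an element of $E^\times$ — which changes $A'$ only up to isomorphism — I may assume $\mathfrak c\define\mathfrak a(\mathfrak a')^{-1}\subset\co_E$ is integral, so the identity map of $\C^\Phi$ descends to an $\co_E$-linear isogeny $\phi:A\to A'$ with kernel $\co_E/\mathfrak c$ and $\deg\phi=\mathrm{N}_{E/\Q}(\mathfrak c)$; after enlarging $\kk$, $\phi$ is defined over $\kk$ and extends to an isogeny of abelian schemes over $\co_\kk$. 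Now I invoke the standard behaviour of the Faltings height under isogeny: taking $\eta'$ a generator of $\pi_*\Omega^{\dim A'}_{\mathcal A'/\co_\kk}$ and $\eta=\phi^*\eta'$, pullback along $\phi$ multiplies each period integral by $\deg\phi$, giving $h^\Falt_\infty(A,\eta)=h^\Falt_\infty(A',\eta')-\tfrac12\log\mathrm{N}(\mathfrak c)$, while $\phi^*$ realizes $\pi_*\Omega^1_{\mathcal A'/\co_\kk}$ as a subsheaf of $\pi_*\Omega^1_{\mathcal A/\co_\kk}$ with quotient the conormal sheaf $\omega_{\ker\phi}$ along the identity section, giving $h^\Falt_f(A,\eta)=\tfrac1{[\kk:\Q]}\log\#\,\omega_{\ker\phi}$. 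Thus the theorem reduces to the identity
\[
\frac{1}{[\kk:\Q]}\,\log\#\,\omega_{\ker\phi}\;=\;\frac12\,\log\mathrm{N}_{E/\Q}(\mathfrak c),
\]
which I would prove one rational prime at a time: over $\co_{\kk,\mathfrak p}$ the kernel $\ker\phi$ is finite flat and killed by $\mathfrak c$, its conormal module is a quotient of the free rank-$(\dim A)$ module $\pi_*\Omega^1_{\mathcal A/\co_{\kk,\mathfrak p}}$ on which $\co_E$ acts, and generically this action produces the isotypic decomposition indexed by $\Phi$, so a local generator of the invertible ideal $\mathfrak c$ acts with determinant $\prod_{\psi\in\Phi}\psi(\mathfrak c)$ up to a unit; therefore $\#\,\omega_{\ker\phi}=\prod_{\psi\in\Phi}\#(\co_\kk/\psi(\mathfrak c)\co_\kk)=\mathrm{N}_{E/\Q}(\mathfrak c)^{\#\Phi\cdot[\kk:E]}=\mathrm{N}_{E/\Q}(\mathfrak c)^{[\kk:\Q]/2}$, using $\#\Phi=\tfrac12[E:\Q]$ and $[\kk:\Q]=[\kk:E]\,[E:\Q]$.

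The step I expect to be the main obstacle is the one just above: making precise, at the primes dividing $\mathrm{N}(\mathfrak c)$ — especially where $E/\Q$ or $\kk/\Q$ ramifies — the $\co_E\otimes\co_\kk$-module structure of the N\'eron differentials $\pi_*\Omega^1_{\mathcal A/\co_\kk}$, and the compatible assertion that $\omega_{\ker\phi}$ is supported on the $\Phi$-part. Over a field this is nothing but the definition of the CM type, but the integral refinement rests on the theory of the $p$-divisible group of a CM abelian variety with multiplication by a maximal order — equivalently on the Shimura–Taniyama formula for its reduction. Once that is granted, the evaluation of $\#\,\omega_{\ker\phi}$ is just the valuation of a determinant, and the numerical coincidence $\#\Phi=\tfrac12[E:\Q]$ — which makes the finite contribution cancel exactly against the archimedean $\tfrac12\log\deg\phi$ — finishes the proof. (Alternatively one may simply appeal to Colmez's original argument in \cite{Colmez}.)
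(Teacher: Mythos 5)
The paper does not prove Theorem~\ref{thm:colmez}; it is stated as a citation to Colmez with no argument given, so there is no proof of record to compare against. Your overall strategy --- Galois invariance by permutation, then reduction to an $\co_E$-linear isogeny $\phi:A\to A'$ with $\ker\phi\iso\co_E/\mathfrak{c}$ and exact cancellation of the two changes in Faltings height --- is the right one, and the computations $h^\Falt_\infty(A,\phi^*\eta')=h^\Falt_\infty(A',\eta')-\frac{1}{2}\log N_{E/\Q}(\mathfrak{c})$ and $h^\Falt_f(A,\phi^*\eta')=\frac{1}{[\kk:\Q]}\log\#\omega_{\ker\phi}$ are correct, so everything reduces to the claim $\#\omega_{\ker\phi}=N_{E/\Q}(\mathfrak{c})^{[\kk:\Q]/2}$.

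At that point your argument has a genuine logical gap. From the facts you establish --- $\omega_{\ker\phi}$ is a quotient of $\omega_{\mathcal{A}}$, the kernel is killed by $\mathfrak{c}$, and the $\co_E$-action on $\omega_{\mathcal{A}}$ has determinant $\prod_{\psi\in\Phi}\psi$ --- one only deduces that $\omega_{\ker\phi}$ is a quotient of $\omega_{\mathcal{A}}/\mathfrak{c}\,\omega_{\mathcal{A}}$, so its order \emph{divides} $\prod_{\psi\in\Phi}\#(\co_\kk/\psi(\mathfrak{c})\co_\kk)$; the ``therefore'' asserting equality is unjustified. Equality amounts to the sharper statement $\phi^*\omega_{\mathcal{A}'}=\mathfrak{c}\,\omega_{\mathcal{A}}$ inside $\omega_{\mathcal{A}}$, which cannot be read off the $\co_E$-module structure of $\omega_{\mathcal{A}}$ alone: over $\C$, for instance, $\phi^*$ is the identity on holomorphic forms, not multiplication by a generator of $\mathfrak{c}$. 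The clean way to close the gap is to argue prime by prime on $p$-divisible groups, and in fact one needs less than the Shimura--Taniyama input you invoke. For each prime $\mathfrak{q}\mid p$ of $\co_E$ choose $\pi_{\mathfrak{q}}\in\co_E$ with $\ord_{\mathfrak{q}}(\pi_{\mathfrak{q}})=1$; then $\pi_{\mathfrak{q}}^{\,\ord_{\mathfrak{q}}(\mathfrak{c})}$ annihilates $\ker\phi[\mathfrak{q}^\infty]$, hence factors through $\phi$ on the $\mathfrak{q}$-divisible group, and the induced $\co_E$-linear map $A'[\mathfrak{q}^\infty]\to A[\mathfrak{q}^\infty]$ is an isomorphism because $\mathfrak{q}\co_{E,\mathfrak{q}}$ is principal. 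This identifies $\mathrm{coker}(\phi^*)$ on the $\mathfrak{q}$-part with $\omega_{A[\mathfrak{q}^\infty]}/\pi_{\mathfrak{q}}^{\ord_{\mathfrak{q}}(\mathfrak{c})}\omega_{A[\mathfrak{q}^\infty]}$, whose length over $\co_{\kk,\mathfrak{p}}$ is the valuation of $\det\bigl(\pi_{\mathfrak{q}}^{\ord_{\mathfrak{q}}(\mathfrak{c})}\mid\omega_{A[\mathfrak{q}^\infty]}\bigr)$; that determinant equals the product of $\psi(\pi_{\mathfrak{q}})^{\ord_{\mathfrak{q}}(\mathfrak{c})}$ over the $\psi\in\Phi$ lying above $\mathfrak{q}$ relative to $\mathfrak{p}$, an identity that is visible on the generic fiber and valid integrally because determinants commute with flat base change --- with no caveats at ramified primes. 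Summing over $\mathfrak{p}$ and $\mathfrak{q}$ recovers your target exactly, and the ramification worries you single out as the main obstacle turn out to be moot: no integral isotypic decomposition of $\omega_{\mathcal{A}}$ is needed, only the local-principality reduction above.
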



\subsection{Colmez's conjecture}


Fix a CM pair  $(E,\Phi)$.  
Theorem \ref{thm:colmez} says that the Faltings height $ h^\Falt_{ (E,\Phi) }$ depends only on the Galois-theoretic data $(E,\Phi)$.
It is natural to ask if there if there is some formula for it that makes this more transparent.  This is precisely what Colmez's conjecture provides: an expression for  $h^\Falt_{ (E,\Phi) }$ in terms of Artin $L$-functions, that makes no mention of abelian varieties.

Define a complex-valued function on $G_\Q$ by 
\[
A_{( E,\Phi) } (g)  = | \Phi \cap g \circ  \Phi  |.
\]  
By averaging over the $G_\Q$-orbit of $\Phi$, we obtain a function 
\[
 A^0_{( E,\Phi)} = \frac{1}{[  G_\Q : \mathrm{Stab}(\Phi) ] } \sum_{ g \in G_\Q/\mathrm{Stab}(\Phi) } A_{(E,g \circ \Phi )}
\]
on $G_\Q$ that is  locally constant, and constant on conjugacy classes. 
As such there is a unique decomposition
\[
A^0_{(E,\Phi)} = \sum_\chi m_{(E,\Phi)}(\chi) \cdot \chi
\]
 as a linear combination of Artin characters.  That is, each $\chi : G_\Q \to \C$ is the character of a continuous representation 
$\rho_\chi : G_\Q \to \GL(W_\chi)$ on a finite dimensional complex vector space.    For each such Artin character  $\chi$ let 
\[
L(s,\chi) = \prod_p   \frac{1}{   \det\big( 1 - p^{-s} \cdot    \rho_\chi( \mathrm{Fr}_p )   \big)   } 
\] 
be the usual Artin $L$-function, where the product is over all primes $p<\infty$ of $\Q$,   and 
\[
 \rho_\chi( \mathrm{Fr}_p ) :  W^{I_p  }  \to  W^{I_p  } 
\]
is  Frobenius acting on the subspace  $W^{I_p} \subset W$  of vectors fixed by the inertia subgroup $I_p \subset G_\Q$
of some chosen place of $\Q^\alg$ above $p$.

Let $c\in G_\Q$ be complex conjugation.  Using the observation that
\[
A^0_{(E,\Phi)}(g) + A^0_{(E,\Phi)}( c\circ g ) = |\Phi |
\]
is independent of $g$, one can  check that any  nontrivial Artin character with  $m_{(E,\Phi)}(\chi) \not=0$  must
satisfy $\chi(c) = -\chi(\mathrm{id})$.  Hence, by   Proposition 3.4 in  Chapitre I of  \cite{Tate}, the $L$-function $L(s,\chi)$ has neither a zero nor a pole at $s=0$.   Define
\[
Z_{(E,\Phi)} = \sum_\chi m_{(E,\Phi)}(\chi) \frac{L'(0,\chi)}{L(0,\chi)} 
\]
and
\[
\mu_{(E,\Phi)} = \sum_\chi m_{(E,\Phi)}(\chi)  \log( f_\chi) ,
\]
where the sums are over all Artin characters, and $f_\chi$ is the Artin conductor of $\chi$.  
A good reference for Artin representations, including the definition of the Artin conductor, is \cite{Murty}.
The \emph{Colmez height} of  the CM pair  $(E,\Phi)$ is defined by
\[
h^\Colmez_{ (E,\Phi) }= - Z_{(E,\Phi)} -  \frac{ \mu_{(E,\Phi)}}{2}.
\]

The following is our restatement of Conjecture 0.4 of  \cite{Colmez}.

\begin{conjecture}[Colmez]\label{conj:general colmez}
For any CM pair $(E,\Phi)$ we have
\[
 h^\Falt_{ (E,\Phi) } = h^\Colmez_{ (E,\Phi) } .
\]
\end{conjecture}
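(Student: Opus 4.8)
Colmez's conjecture in full generality is open, so what follows is a strategy, modeled on the proof of Theorem \ref{thm:average colmez} sketched above, rather than a complete argument. \emph{The reductions.} Both $h^\Falt_{(E,\Phi)}$ and $h^\Colmez_{(E,\Phi)}$ depend only on the $G_\Q$-set datum recorded by the function $A^0_{(E,\Phi)}$, and each extends to a linear functional on the space spanned by such functions. The functions $A^0$ attached to a CM field, to its CM subfields, and to products of CM fields satisfy linear relations coming from induction and additivity; exploiting these, as Colmez does in \cite{Colmez}, one reduces Conjecture \ref{conj:general colmez} to the case of a \emph{primitive} CM pair, and can hope to reduce further to pairs whose Galois closure has a restricted Galois group. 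The first task is to carry out this reduction precisely, so that it suffices to treat one primitive $(E,\Phi)$ at a time.

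\emph{Geometric realization.} For a primitive $(E,\Phi)$, the plan is to generalize \S\ref{s:big cm}: rather than tracking the average over all CM types via the total reflex pair $(E^\sharp,\Phi^\sharp)$, one realizes the CM torus attached to the reflex datum of $(E,\Phi)$ inside the Kuga--Satake family over an integral model $\mathcal{M}$ of a Shimura variety of Hodge type whose signature data $(r_v,s_v)$ at the real places of the relevant totally real field are prescribed by $\Phi$. When those signatures can be taken with second component $\le 2$ one stays with an orthogonal Shimura variety as in the body of the paper; in general one is pushed onto a unitary Shimura variety of signature $(r,s)$. As in \eqref{ks faltings}, the Faltings height of the CM abelian variety so produced is computed, up to elementary archimedean constants, by $[\widehat{\taut}:\mathcal{Y}_\Z]/\deg_\C(\mathcal{Y}_\Z)$, where $\mathcal{Y}_\Z\to\mathcal{M}$ is the CM cycle, and a relation of the shape \eqref{funny falt} ties this to $h^\Colmez_{(E,\Phi)}$.

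\emph{Intersection theory and matching.} One next needs an arithmetic Siegel--Weil identity expressing $\widehat{\taut}$, and more generally the generating series of special cycles, as a modular form valued in the arithmetic Chow ring of $\mathcal{M}$; in the orthogonal case this is exactly what Borcherds products and regularized theta lifts supply in \S\ref{s:harmonic}--\S\ref{s:special to eisenstein}. Granting such an identity, rewrite $[\widehat{\taut}:\mathcal{Y}_\Z]$ as a combination of arithmetic intersections $[\widehat{\mathcal{Z}}(f):\mathcal{Y}_\Z]$, split each into an archimedean term --- a derivative of a Hilbert or Siegel Eisenstein series of weight one, in the spirit of Bruinier--Kudla--Yang \cite{BKY} and \eqref{intro intersect} --- plus a finite term computed from the deformation theory of $p$-divisible groups with $\co_E$-action, as in Theorem \ref{thm:deformation}. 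The last step is a pure comparison of $L$-functions: the constant term of the Eisenstein derivative must reproduce $-Z_{(E,\Phi)}$, and the non-constant Fourier coefficients, summed against $\mathcal{Y}_\Z$, must assemble into $-\mu_{(E,\Phi)}/2$. This is precisely the passage that, in the averaged setting, upgrades \eqref{intro taut height} and \eqref{intro intersect} to Theorem \ref{thm:average colmez} --- except that here one may not discard the finite terms: every Fourier coefficient matters.

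\emph{The main obstacle.} The reflex datum of a general primitive $(E,\Phi)$ has signatures at the real places that cannot, in general, be matched to the orthogonal signature $(n,2)$ of \S\ref{s:big cm} (nor to the unitary signature $(n,1)$). One is therefore forced onto a unitary Shimura variety of general signature $(r,s)$, or a more general Shimura variety of Hodge type, on which the divisorial Borcherds-product machinery of \S\ref{s:harmonic} is unavailable, and for which the required arithmetic Siegel--Weil identity --- generating series of special cycles equals the derivative of a Siegel--Eisenstein series, in the relevant degree of the arithmetic Chow ring --- is known only in low degree or in Fourier--Jacobi form, by work of Kudla--Rapoport \cite{KR99, KR00} and of Li--Zhang. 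One also needs the local intersection numbers of those special cycles against CM points, higher-dimensional analogues of Gross--Keating \cite{GK} and of Theorem \ref{thm:deformation}, which are likewise not known in full. The same difficulty reappears in the quaternionic approach of Yuan--Zhang: passing from the averaged statement to the exact one would require pinning down every local height contribution that the averaging presently lets one ignore. So the honest proposal is twofold: either establish these two inputs in the generality forced by an arbitrary CM type, or find a reduction of the general exact conjecture to the orthogonal signature-$(n,2)$ (or unitary signature-$(n,1)$) case, where the methods of this paper --- strengthened to genuine equality as in Theorem \ref{thm:deformation} --- already apply. Pending one of these, Conjecture \ref{conj:general colmez} remains open.
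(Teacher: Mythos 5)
The statement you were asked about is a conjecture, and the paper contains no proof of it: what is proved there is only the averaged version (Theorem \ref{thm:average colmez}), together with the citation of the abelian case (Colmez, Obus). Your submission correctly recognizes this and offers a strategy rather than a proof, and as an assessment of the state of the art it is essentially accurate: the reduction to a statement linear in the class functions $A^0_{(E,\Phi)}$ is indeed how Colmez organizes the conjecture, and the two missing inputs you isolate --- an arithmetic Siegel--Weil/modularity identity in the generality forced by an arbitrary CM type, and the corresponding local intersection computations against CM cycles beyond the divisor case --- are exactly the obstructions that confine current methods to the averaged statement and to special cases such as Yang's non-Galois quartic fields \cite{Ya10a,Ya10b}. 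So there is nothing to ``compare'' in the sense of the other exercises: no proof exists, and you do not claim one.

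One point worth sharpening in your sketch. You suggest realizing the reflex torus of a single primitive pair $(E,\Phi)$ inside a Kuga--Satake family and computing $h^\Falt_{(E,\Phi)}$ from $[\widehat{\taut}:\mathcal{Y}_\Z]$ ``as in (\ref{ks faltings})--(\ref{funny falt})''. In the paper's construction this is structurally impossible within the signature-$(n,2)$ framework: by Proposition \ref{prop:KScm} the Kuga--Satake fibers over the big CM cycle have complex multiplication by the total reflex algebra $E^\sharp$ with type $\Phi^\sharp$, and Corollary \ref{cor:reflex height} shows this height is \emph{identically} the average over all CM types of $E$, independently of the choices of $\xi$ and $L$. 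So the averaging is not an artifact of the proof that one might strip away by a cleverer choice of auxiliary data; isolating a single $\Phi$ genuinely requires leaving the orthogonal $(n,2)$ setting (unitary or more general Hodge-type Shimura varieties, with the attendant unproved inputs), which is consistent with, but stronger than, the way you phrased ``the main obstacle''. With that caveat, your conclusion --- that Conjecture \ref{conj:general colmez} remains open and your outline is a program, not a proof --- is the correct one.
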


Following ideas of Gross \cite{gross:unitary} and Anderson \cite{And82}, Colmez  was able to prove the abelian case of Conjecture \ref{conj:general colmez} up to a rational multiple  of $\log(2)$.  The $\log(2)$ error term was later removed  by Obus.

\begin{theorem}[Colmez \cite{Colmez}, Obus \cite{obus}]
If $E/\Q$ is Galois with abelian Galois group, then
\[
 h^\Falt_{ (E,\Phi) } = h^\Colmez_{ (E,\Phi) } 
\]
holds for every CM type  $\Phi \subset \Hom( E,\C)$.
\end{theorem}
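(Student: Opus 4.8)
The plan is to follow Colmez's strategy: realize CM abelian varieties with abelian CM field as isogeny factors of Jacobians of Fermat curves, and compute both sides of the conjectural identity in terms of values of the $\Gamma$-function.

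First I would exploit the linearity of both sides of Conjecture \ref{conj:general colmez} in the CM type. On the analytic side, $h^\Colmez_{(E,\Phi)}$ was built from the decomposition $A^0_{(E,\Phi)} = \sum_\chi m_{(E,\Phi)}(\chi)\cdot\chi$, and when $E/\Q$ is abelian the $\chi$ occurring are Dirichlet characters --- the trivial one contributing the $\log(2\pi)$ term via $\zeta'(0)/\zeta(0)=\log(2\pi)$, the others being odd. On the geometric side one invokes Colmez's additivity property: $(E,\Phi)\mapsto h^\Falt_{(E,\Phi)}$ extends to a homomorphism out of a suitable Grothendieck group of CM pairs. Combining these with Kronecker--Weber, one reduces the conjecture for all abelian $E$ to the case of cyclotomic fields $E=\Q(\zeta_N)$, and further to computing the contributions of the individual rank-one CM motives $M_\chi$ cut out by the odd primitive Dirichlet characters $\chi$ of conductor dividing $N$.

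Next I would make the $M_\chi$ explicit. With $f_\chi=m$, the Jacobian of the Fermat curve $x^m+y^m=z^m$ carries an action of $\mu_m\times\mu_m$ decomposing it into rank-one CM pieces indexed by characters, and the piece attached to $\chi$ has de Rham--Betti comparison period a product of values $\Gamma(a/m)$ --- the classical theory of Fermat periods of Deligne, Gross, and Shimura, packaged into the period computation of Anderson \cite{And82}, whose quadratic case is Chowla--Selberg as in \eqref{new cs}. The archimedean part of $h^\Falt(M_\chi)$ is then $-\tfrac12\log$ of such a period, hence an explicit $\Q$-linear combination of the numbers $\log\Gamma(a/m)$, and the general analogue of the Lerch formula preceding \eqref{better CS} converts exactly this combination into $-\tfrac12 L'(0,\chi)/L(0,\chi)$ plus controlled $\log m$ and $\log(2\pi)$ contributions. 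The finite part of $h^\Falt(M_\chi)$ is governed by the bad reduction of the Fermat curve, and away from the prime $2$ it contributes logarithms of the ramified primes of $E$, which the conductor--discriminant formula matches against $-\tfrac14\log f_\chi$.

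The main obstacle is the contribution at $p=2$ to the finite part of the Faltings height --- equivalently, a precise analysis of the reduction of Fermat curves (or of the relevant CM abelian varieties) at $2$. This is the single place where Colmez's original computation leaves an undetermined rational multiple of $\log 2$, and removing it requires Obus's refined $2$-adic local computation \cite{obus}. Once that term is known, assembling the archimedean part, the finite part, and the Lerch formula yields the exact equality $h^\Falt_{(E,\Phi)}=h^\Colmez_{(E,\Phi)}$.
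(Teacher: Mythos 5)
The paper does not actually prove this theorem; it is stated purely as a citation to Colmez and Obus, with a one-sentence gloss in the surrounding text (``Following ideas of Gross and Anderson, Colmez was able to prove the abelian case \ldots\ up to a rational multiple of $\log(2)$. The $\log(2)$ error term was later removed by Obus.''). So there is no in-paper proof to compare against, only the literature it points to.

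Measured against that literature, your sketch is a fair and essentially accurate account of the Colmez--Obus strategy. The three load-bearing ingredients are all present and correctly placed: (i) the linearity of $(E,\Phi)\mapsto h^\Falt_{(E,\Phi)}$ in the class function $A^0_{(E,\Phi)}$ (Colmez's Th\'eor\`eme 0.3, which the paper invokes in the proof of Corollary \ref{cor:reflex height}), which together with Kronecker--Weber reduces the abelian case to motives with CM by cyclotomic fields; (ii) Anderson's period formula for the CM pieces of Fermat Jacobians, expressing de Rham--Betti periods as products of $\Gamma$-values, converted to $L'(0,\chi)/L(0,\chi)$ by the Lerch/Deninger identity; (iii) the observation that the remaining obstruction is a $2$-adic computation of the finite part of the height, which Colmez left ambiguous up to $\Q\log 2$ and which Obus resolved. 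The only places I would push back are on presentational precision rather than correctness: the reduction through ``a suitable Grothendieck group of CM pairs'' deserves to be stated as Colmez's precise linearity statement on the $\Q$-span of the functions $A^0_{(E,\Phi)}$ (otherwise the reduction to odd primitive Dirichlet characters is not self-justifying); and the assertion that the finite part away from $2$ matches $-\tfrac14\log f_\chi$ via the conductor--discriminant formula is the right heuristic but is itself a nontrivial input of Colmez's original paper (relying on Fontaine-type bounds on conductors of $p$-divisible groups), not a formality. As an outline of the argument at the level of detail the present survey would require, this is adequate; a complete proof would have to reproduce the computations of Colmez and Obus, which is beyond the scope of both your proposal and the paper being discussed.
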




\subsection{The averaged version}


Let $E$ be a CM field of degree $2d$.  Let $F\subset E$ be its maximal totally real subfield, and let 
\[
\chi : \A_F^\times \to \{\pm 1\}
\]
be the associated quadratic character. 
Denote by $L(s,\chi)$ the usual $L$-function, and by 
\[
\Lambda(s, \chi )  =   \left|   D_E   /  D_F \right|^{\frac{s}{2}}   \cdot   \Gamma_\R(s+1)^d  \cdot L(s,\chi)
\]
the completed $L$-function.  Here   
\[
\Gamma_\R(s) =      \pi^{-s/2} \Gamma(s/2) ,
\]
and $D_E$ and $D_F$ are the (absolute) discriminants of $E$ and $F$.   
The completed $L$-function satisfies the function equation  
\[
\Lambda(1-s, \chi) = \Lambda(s,\chi),
\]
and
\[
\frac{ \Lambda'(0,\chi )  }{\Lambda(0,\chi )}    
=    \frac{ L'(0,\chi)  }{ L(0,\chi) }  + \frac{1}{2} \log \left| \frac{  D_{E}  }{  D_{F} }\right| 
- \frac{ d   \log(4\pi e^\gamma)}{ 2 } .
\]

As Colmez himself noted, the right hand side of the identity of Conjecture \ref{conj:general colmez} simplifies considerably if one averages over all CM types of $E$.  
The following is an elementary exercise.  See the proof of  \cite[Proposition 9.3.1]{AGHMP-2} for details.

\begin{proposition}\label{prop:artin average}
For a fixed CM field $E$ of degree $2d$,
\[
\frac{1}{2^d} \sum_\Phi h^\Colmez_{(E,\Phi)}
 = 
-  \frac{1}{2} \cdot \frac{ L'(0,\chi)  }{ L(0,\chi) }  -   \frac{1}{4}  \cdot  \log \left| \frac{  D_{E}  }{  D_{F} }\right| 
-  \frac{ d   \log(2\pi)}{ 2 }
\]
where the summation on the left is over all CM types $\Phi\subset \Hom(E,\C)$.
\end{proposition}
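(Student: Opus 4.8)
The plan is to unwind the definition of $h^\Colmez_{(E,\Phi)}$ and average the two constituent pieces $Z_{(E,\Phi)}$ and $\mu_{(E,\Phi)}$ separately, reducing everything to an explicit computation of the averaged Artin multiplicities $m_{(E,\Phi)}(\chi)$. The starting observation is that the averaging over $\Phi$ can be moved inside the linear functional $\Phi\mapsto A^0_{(E,\Phi)}$: since $h^\Colmez_{(E,\Phi)}$ depends on $\Phi$ only through $A^0_{(E,\Phi)}$, and since by Theorem \ref{thm:colmez} (or directly from the definition) $A^0_{(E,\Phi)}$ is constant on $G_\Q$-orbits, the average $\frac{1}{2^d}\sum_\Phi A^0_{(E,\Phi)}$ is itself an honest average of the function $g\mapsto A_{(E,\Phi)}(g)=|\Phi\cap g\Phi|$ over \emph{all} $2^d$ CM types simultaneously. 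So first I would compute
\[
\overline{A}(g)\define \frac{1}{2^d}\sum_\Phi |\Phi\cap g\circ\Phi|.
\]

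The key step is to evaluate $\overline{A}(g)$ combinatorially. Identify $\Hom(E,\C)$ with $\Hom(F,\R)\times\{\pm\}$, so that a CM type $\Phi$ is a choice of one element from each of the $d$ conjugate pairs; there are indeed $2^d$ of these. An element $g\in G_\Q$ permutes $\Hom(E,\C)$ commuting with the $\pm$-involution, hence permutes the $d$ pairs via some permutation $\pi_g\in S_d$, together with a sign $\epsilon_g(i)\in\{\pm 1\}$ on each pair. For a fixed $g$, a place $\varphi$ lies in $\Phi\cap g\circ\Phi$ iff $\varphi\in\Phi$ and $g^{-1}\varphi\in\Phi$. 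Summing over all $\Phi$ and swapping the order of summation, the contribution of each pair (and its $g$-orbit structure) can be counted directly: one finds that $\overline{A}(g)$ depends only on the number $r(g)$ of pairs $i$ on which $g$ acts \emph{without} swapping the two elements, i.e. $\epsilon_g(i)=+1$ and $\pi_g$ fixes... more precisely it depends on the cycle structure of the signed permutation $(\pi_g,\epsilon_g)$, and a short computation gives $\overline{A}(g)=\frac{d}{2}+\frac{1}{2}\cdot(\text{fixed-point count of }g\text{ on }\Hom(F,\R))$ — equivalently $\overline{A}(g)=\frac12\big(d+\mathrm{tr}\,\mathrm{Ind}_{F}^{\Q}\mathbf 1(g)\big)$ once one checks the archimedean/sign bookkeeping works out. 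The cleanest route is to observe $\sum_\Phi|\Phi\cap g\Phi| = \sum_{\varphi}\#\{\Phi: \varphi\in\Phi,\ g^{-1}\varphi\in\Phi\}$ and that the inner count is $2^{d-1}$ if $g^{-1}\varphi$ and $\varphi$ lie in the same pair with the same sign (always true when $\varphi=g^{-1}\varphi$), $2^{d-1}$ if they lie in different pairs, and $0$ if they lie in the same pair with opposite sign; bookkeeping these cases against the $\pm$-action of $g$ yields the stated formula for $\overline{A}(g)$ as (essentially) $\frac12$ times the character of $1\oplus\mathrm{Ind}_F^\Q\mathbf1$.

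With $\overline{A}(g)=\frac12\chi_{\mathrm{triv}}(g)\cdot(\text{const})+\frac12\chi_{\mathrm{Ind}}(g)$ in hand — where by class field theory and Artin formalism $\mathrm{Ind}_F^\Q\mathbf1 = \mathbf1\oplus(\text{the Artin character cutting out }E)$, i.e. $\chi_{\mathrm{Ind}} = \mathbf1 + \chi_{E/F}$ with $\chi_{E/F}$ the Artin character attached to the quadratic idele class character $\chi$ of the theorem — the averaged multiplicities are read off: $\overline{m}(\mathbf1)$ is some explicit rational number contributing nothing to $Z$ (since $L(s,\mathbf1)=\zeta(s)$ has a pole, but its multiplicity must combine with the conductor term; here I'd invoke that the trivial character's contribution to $Z$ is excluded precisely because only $\chi(c)=-\chi(\mathrm{id})$ characters survive — so after averaging the only surviving nontrivial character is $\chi_{E/F}$ with multiplicity $\overline m(\chi_{E/F})=\tfrac12$), and hence $\frac{1}{2^d}\sum_\Phi Z_{(E,\Phi)} = \frac12\frac{L'(0,\chi)}{L(0,\chi)}$ and $\frac{1}{2^d}\sum_\Phi\mu_{(E,\Phi)} = \frac12\log f_{\chi_{E/F}}$. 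Finally, the conductor-discriminant formula gives $f_{\chi_{E/F}}=|D_E/D_F|$, and one also picks up the archimedean factor: the $\frac{d\log(2\pi)}{2}$ term comes from normalizing $L(s,\chi)$ without its $\Gamma_\R$-factors, which is exactly the discrepancy recorded in the displayed formula for $\Lambda'/\Lambda$ above. Substituting into $h^\Colmez_{(E,\Phi)}=-Z_{(E,\Phi)}-\mu_{(E,\Phi)}/2$ and averaging yields the claimed identity.

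\textbf{Main obstacle.} The genuinely delicate point is the bookkeeping of signs in evaluating $\overline{A}(g)$ and, relatedly, correctly disentangling the trivial-character contribution after averaging: $A^0_{(E,\Phi)}$ itself involves the trivial character with a nonzero coefficient, but that coefficient cancels in $h^\Colmez$ via the constraint $\chi(c)=-\chi(\mathrm{id})$ together with the regularization of $L(s,\mathbf1)$ at $s=0$; getting the normalization constants (the $\log(2\pi)$, the factor $\tfrac12$ on $\chi$) to land exactly as in the statement, rather than off by a rational multiple of $\log 2$ or an extra $\log\pi$, requires care with the definitions of $L(s,\chi)$ versus $\Lambda(s,\chi)$ and of the Artin conductor. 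This is precisely why the paper calls it "an elementary exercise" while still pointing to \cite[Proposition 9.3.1]{AGHMP-2} for the details.
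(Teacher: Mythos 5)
Your framework is the standard one (unwind $h^\Colmez$, swap the sum to compute the averaged class function $\overline{A}(g)=\tfrac{1}{2^d}\sum_\Phi|\Phi\cap g\Phi|$, decompose into Artin characters, apply the conductor-discriminant formula), and it matches the route one would extract from \cite[Prop.~9.3.1]{AGHMP-2}. But your central combinatorial computation contains a genuine error which propagates into a wrong identification of the relevant Artin character.

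\textbf{The miscount.} Fix $g$ and $\varphi$, and set $\psi=g^{-1}\varphi$. You claim that when $\psi$ and $\varphi$ lie in \emph{different} conjugate pairs, the number of CM types containing both is $2^{d-1}$. It is in fact $2^{d-2}$: the conditions $\varphi\in\Phi$ and $\psi\in\Phi$ pin down the choice in two distinct pairs, leaving $d-2$ free binary choices. The three cases are: $2^{d-1}$ if $\psi=\varphi$; $0$ if $\psi=\bar\varphi$; $2^{d-2}$ otherwise. Writing $n_0(g)=\#\{\varphi:g\varphi=\varphi\}$ and $n_1(g)=\#\{\varphi:g\varphi=\bar\varphi\}$, the correct evaluation is
\[
\overline{A}(g)=\frac{1}{2^d}\Big(n_0\cdot 2^{d-1}+\big(2d-n_0-n_1\big)\cdot 2^{d-2}\Big)=\frac{d}{2}+\frac{n_0(g)-n_1(g)}{4}.
\]
Now the sign bookkeeping you worried about: since the $G_\Q$-action on $\Hom(E,\C)$ commutes with conjugation ($g\cdot\bar\varphi=\overline{g\cdot\varphi}$ because $c\circ\eta=\eta\circ\rho$ for every $\eta\colon E\to\C$, $\rho$ the CM involution), an element $g$ either fixes a conjugate pair pointwise, swaps the two elements of a fixed pair, or moves the pair. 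If $p$ and $s$ denote the number of pointwise-fixed and swapped pairs respectively, then $n_0=2p$, $n_1=2s$, while $\chi_F(g)=p+s$ and $\big(\mathrm{Ind}_F^\Q\chi_{E/F}\big)(g)=p-s$. Hence
\[
\overline{A}(g)=\frac{d}{2}\cdot\mathbf{1}(g)+\frac{1}{2}\,\big(\mathrm{Ind}_F^\Q\chi_{E/F}\big)(g),
\]
\emph{not} $\tfrac12\big(d\cdot\mathbf 1+\mathrm{Ind}_F^\Q\mathbf 1\big)$ as you wrote. Your formula would bring in $\zeta_F$ (which vanishes to order $d-1$ at $s=0$, so $L'/L$ there is not even defined) rather than $L(s,\chi_{E/F})$; it also fails the sanity check $\overline A(g)+\overline A(cg)=d$, since $\mathrm{Ind}_F^\Q\mathbf 1(cg)=\mathrm{Ind}_F^\Q\mathbf 1(g)$. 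Relatedly, your assertion that $\mathrm{Ind}_F^\Q\mathbf 1=\mathbf 1\oplus\chi_{E/F}$ is false — that identity holds for $\mathrm{Ind}_E^F\mathbf 1$ as a character of $G_F$, not for induction from $F$ to $\Q$.

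\textbf{The trivial character.} You also mischaracterize its role. It is not ``excluded'' from $Z$; it appears with multiplicity $\bar m(\mathbf 1)=d/2$ and contributes $\tfrac{d}{2}\cdot\zeta'(0)/\zeta(0)=\tfrac{d}{2}\log(2\pi)$ to $Z$ (using $\zeta(0)=-\tfrac12$, $\zeta'(0)=-\tfrac12\log 2\pi$), which is exactly where the $-\tfrac{d}{2}\log(2\pi)$ in the statement comes from after applying $h^\Colmez=-Z-\mu/2$. The constraint $\psi(c)=-\psi(\mathrm{id})$ concerns only the nontrivial constituents, and for $\mathrm{Ind}_F^\Q\chi_{E/F}$ it is automatically satisfied. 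With the corrected $\overline A$, one has $Z(\overline A)=\tfrac d2\log(2\pi)+\tfrac12\,L'(0,\chi)/L(0,\chi)$ and $\mu(\overline A)=\tfrac12\log|D_E/D_F|$ (conductor-discriminant, $\mathfrak f_{\mathrm{Ind}_F^\Q\chi}=|D_E/D_F|$), and then $-Z-\mu/2$ produces the claimed right-hand side exactly. Once you fix the $2^{d-2}$ count and track $\chi_{E/F}$ through the induction, the argument closes.
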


Proposition \ref{prop:artin average} is the justification for calling Theorem \ref{thm:average colmez} the averaged Colmez conjecture.
Somewhat oddly, nowhere in our proof of Theorem \ref{thm:average colmez} does there ever appear an abelian variety with complex multiplication by $E$.
What will appear are  abelian varieties with complex multiplication by the total reflex algebra $E^\sharp$ defined in \S \ref{ss:reflex}.
Recall that the total reflex type $\Phi^\sharp$ depends on the additional choice of an embedding $\iota_0 : E \to \C$, but the $G_\Q$-orbit of $\Phi^\sharp$ does not. It follows that  $A^0_{( E^\sharp ,\Phi^\sharp) }$,  $h^\Falt_{( E^\sharp ,\Phi^\sharp) }$, and  $h^\Colmez_{( E^\sharp ,\Phi^\sharp) }$ depend only on $E$, and not on $\iota_0$.

The connection between abelian varieties with CM by $E$ and CM by $E^\sharp$ is proved by the following  proposition.
The proof is elementary Galois theory, and we refer the reader to  \cite[Proposition 9.3.2]{AGHMP-2} for the proof.

\begin{proposition}\label{prop:average reflex}
For any CM field $E$ we have 
\[
 A^0_{( E^\sharp ,\Phi^\sharp) } = \frac{ 1}{ 2d }   \sum_{ \Phi } A^0_{( E,\Phi) }  ,
\]
where the summation on the right is over all CM types of $E$. 
\end{proposition}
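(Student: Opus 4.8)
The plan is to unwind both sides of the claimed identity $A^0_{(E^\sharp,\Phi^\sharp)} = \frac{1}{2d}\sum_\Phi A^0_{(E,\Phi)}$ purely in terms of the $G_\Q$-set structures and then match them. Recall that by construction $\Hom(E^\sharp,\C)$ is, as a $G_\Q$-set, the set $X$ of all CM types of $E$, and $\Phi^\sharp\subset X$ is the subset of CM types containing the fixed embedding $\iota_0$. So the function $A_{(E^\sharp,\Phi^\sharp)}(g) = |\Phi^\sharp \cap g\circ\Phi^\sharp|$ counts the number of CM types $\Psi$ of $E$ with $\iota_0\in\Psi$ and $\iota_0\in g^{-1}\circ\Psi$, equivalently $\iota_0\in\Psi$ and $g\circ\iota_0\in\Psi$.

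First I would compute the orbit-averaged function $A^0_{(E^\sharp,\Phi^\sharp)}(g)$ directly. Averaging over the $G_\Q$-orbit of $\Phi^\sharp$ inside $X$ amounts to replacing $\iota_0$ by an arbitrary element of its orbit; but the $G_\Q$-orbit of $\iota_0$ in $\Hom(E,\C)$ is all of $\Hom(E,\C)$ when $E/\Q$ need not be Galois — more precisely, one should average over the translates $\sigma\circ\Phi^\sharp$ as $\sigma$ ranges over coset representatives for $\mathrm{Stab}(\Phi^\sharp)$ in $G_\Q$, and the key observation is that the corresponding translates of $\iota_0$ sweep out each element of $\Hom(E,\C)$ the same number of times. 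This should give
\[
A^0_{(E^\sharp,\Phi^\sharp)}(g) = \frac{1}{2d}\sum_{\varphi\in\Hom(E,\C)} \#\{\Psi\in X : \varphi\in\Psi,\ g\circ\varphi\in\Psi\}.
\]
Then I would compute the right-hand side $\frac{1}{2d}\sum_\Phi A^0_{(E,\Phi)}(g)$ similarly: the outer sum over all $2^d$ CM types $\Phi$ of $E$ combined with the orbit-averaging in the definition of $A^0_{(E,\Phi)}$ collapses (since summing $A^0_{(E,\Phi)}$ over the full set of CM types is the same as summing $A_{(E,\Phi)}$ over the full set, the orbit-average being a convex combination of terms already in the sum) to $\frac{1}{2d}\sum_\Phi |\Phi\cap g\circ\Phi| = \frac{1}{2d}\sum_\Phi \#\{\varphi\in\Hom(E,\C) : \varphi\in\Phi,\ g\circ\varphi\in\Phi\}$. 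Swapping the order of summation turns this into $\frac{1}{2d}\sum_{\varphi}\#\{\Phi : \varphi\in\Phi,\ g\circ\varphi\in\Phi\}$, which is visibly the same expression obtained for the left-hand side.

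The key structural input that makes both reductions work is the self-duality of CM types: $\Psi$ is a CM type iff $\overline\Psi$ is, and $\Psi\sqcup\overline\Psi=\Hom(E,\C)$; this is what lets one pass freely between "counting $\varphi$ in a fixed $\Phi$" and "counting $\Phi$ containing a fixed $\varphi$," and it is the reason the combinatorial factor comes out to exactly $\frac{1}{2d} = \frac{1}{|\Hom(E,\C)|}$. The main obstacle, and the only place real care is needed, is the bookkeeping in the first step: verifying that averaging $A_{(E^\sharp,\cdot)}$ over the $G_\Q$-orbit of $\Phi^\sharp$ in $X$ is genuinely equivalent to averaging over the choice of basepoint $\varphi\in\Hom(E,\C)$ in place of $\iota_0$, with uniform multiplicity. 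This is where one must use that $G_\Q$ acts transitively on $\Hom(E,\C)$ together with a counting argument relating $[\mathrm{Stab}(\iota_0):\mathrm{Stab}(\Phi^\sharp)\cap\mathrm{Stab}(\iota_0)]$-type indices; everything else is a routine double-counting identity. Since the paper explicitly defers the detailed verification to \cite[Proposition 9.3.2]{AGHMP-2}, for this expository treatment I would carry out the two symmetric reductions above to the common expression $\frac{1}{2d}\sum_\varphi\#\{\Phi : \varphi, g\circ\varphi\in\Phi\}$ and leave the orbit-counting lemma as the cited elementary fact.
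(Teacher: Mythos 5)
Your argument is correct, and it is the expected elementary Galois-theoretic double count; the paper gives no proof of its own and simply cites \cite[Proposition~9.3.2]{AGHMP-2}. The one step you flag as requiring care --- passing from the average over $G_\Q/\mathrm{Stab}(\Phi^\sharp)$ to a sum over $\varphi\in\Hom(E,\C)$ --- actually resolves with no multiplicity bookkeeping at all, because $\mathrm{Stab}(\Phi^\sharp)=\mathrm{Stab}(\iota_0)$ exactly. Indeed, $\sigma\circ\Phi^\sharp=\{\Psi : \sigma\circ\iota_0\in\Psi\}$, so $\mathrm{Stab}(\iota_0)\subseteq\mathrm{Stab}(\Phi^\sharp)$; conversely if $\sigma\circ\iota_0\neq\iota_0$ one can always exhibit a CM type $\Psi$ with $\iota_0\in\Psi$ and $\sigma\circ\iota_0\notin\Psi$ (take any $\Psi$ containing both $\iota_0$ and $\overline{\sigma\circ\iota_0}$, using that $\iota_0\neq\overline{\sigma\circ\iota_0}$; and if $\sigma\circ\iota_0=\overline{\iota_0}$ the claim is automatic), so $\sigma\notin\mathrm{Stab}(\Phi^\sharp)$. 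Hence $\sigma\mapsto\sigma\circ\iota_0$ is a bijection $G_\Q/\mathrm{Stab}(\Phi^\sharp)\xrightarrow{\sim}\Hom(E,\C)$, $[G_\Q:\mathrm{Stab}(\Phi^\sharp)]=2d$, and your expression
\[
A^0_{(E^\sharp,\Phi^\sharp)}(g)=\frac{1}{2d}\sum_{\varphi\in\Hom(E,\C)}\#\{\Psi : \varphi\in\Psi,\ g\circ\varphi\in\Psi\}
\]
follows immediately. The rest of your argument --- the collapse $\sum_\Phi A^0_{(E,\Phi)}=\sum_\Phi A_{(E,\Phi)}$, the rewriting $A_{(E,\Phi)}(g)=\#\{\varphi\in\Phi : g\circ\varphi\in\Phi\}$ via $|\Phi\cap g\circ\Phi|=|\Phi\cap g^{-1}\circ\Phi|$, and the interchange of summation --- is routine and correct. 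One small quibble: the interchange of summation is an unconditional double count and does not really rely on the self-duality $\Phi\sqcup\overline\Phi=\Hom(E,\C)$; the factor $\tfrac{1}{2d}$ on the left comes purely from the stabilizer index just computed, not from self-duality.
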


\begin{corollary}\label{cor:reflex height}
For any CM field $E$ we have 
\[
 h^\Falt_{( E^\sharp ,\Phi^\sharp) }  =\frac{ 1}{ 2d }   \sum_\Phi h^\Falt_{(E,\Phi)} ,\qquad
  h^\Colmez_{( E^\sharp ,\Phi^\sharp) }  =\frac{ 1}{ 2d }   \sum_\Phi h^\Colmez_{(E,\Phi)}
\]
where the summations are over all CM types of $E$. 
\end{corollary}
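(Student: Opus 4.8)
The plan is to deduce Corollary~\ref{cor:reflex height} from Proposition~\ref{prop:average reflex} by observing that both the Faltings height and the Colmez height are, in a suitable sense, linear functionals of the function $A^0_{(E,\Phi)}$ on $G_\Q$. The Colmez height is the easy half: by its very definition, $h^\Colmez_{(E,\Phi)} = -Z_{(E,\Phi)} - \mu_{(E,\Phi)}/2$ is obtained from $A^0_{(E,\Phi)}$ by first extracting the multiplicities $m_{(E,\Phi)}(\chi)$ in the decomposition into Artin characters, and then forming the fixed linear combinations $\sum_\chi m_{(E,\Phi)}(\chi) L'(0,\chi)/L(0,\chi)$ and $\sum_\chi m_{(E,\Phi)}(\chi)\log f_\chi$. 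Since $\chi \mapsto m_{(E,\Phi)}(\chi)$ is a linear function of $A^0_{(E,\Phi)}$ (the Artin characters being linearly independent as class functions), the entire assignment $A^0_{(E,\Phi)} \mapsto h^\Colmez_{(E,\Phi)}$ is $\R$-linear. Applying this linear map to the identity of Proposition~\ref{prop:average reflex} immediately yields $h^\Colmez_{(E^\sharp,\Phi^\sharp)} = \frac{1}{2d}\sum_\Phi h^\Colmez_{(E,\Phi)}$.

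For the Faltings-height half, the corresponding statement to prove is $h^\Falt_{(E^\sharp,\Phi^\sharp)} = \frac{1}{2d}\sum_\Phi h^\Falt_{(E,\Phi)}$, which is exactly the relation (\ref{funny falt}) asserted in the introduction. Here there is no formal linearity to exploit directly, and the argument must instead pass through abelian varieties. The key input is the classical theory of complex multiplication, which expresses the CM abelian variety of type $(E^\sharp,\Phi^\sharp)$ in terms of CM abelian varieties of type $(E,\Phi)$. Concretely, $E^\sharp \iso E_1' \times \cdots \times E_r'$ with each $(E_i',\Phi_i')$ the classical reflex of a CM type $\Phi_i$ of $E$ (Proposition~\ref{prop:reflex pair}); one realizes an abelian variety $B$ with CM by $\co_{E^\sharp}$ and CM type $\Phi^\sharp$ as a product $B \sim B_1 \times \cdots \times B_r$, where $B_i$ has CM by (an order in) $E_i'$ and CM type $\Phi_i'$. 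Then $h^\Falt(B) = \sum_i \dim(B_i)^{-1}\cdot(\text{suitably weighted})\, h^\Falt(B_i)$ — more precisely, Faltings height is additive over products and scales correctly under isogeny up to a $\circeq$-negligible error, but since Corollary~\ref{cor:reflex height} asserts an exact equality, one needs the precise behavior: $h^\Falt$ is additive over products of CM abelian varieties and invariant under $E$-linear isogeny by Theorem~\ref{thm:colmez}. One must then match up, for each $i$, the Faltings height $h^\Falt_{(E_i',\Phi_i')}$ with an average of the $h^\Falt_{(E,\Phi)}$ over the $G_\Q$-orbit of $\Phi_i$ — this is where Proposition~\ref{prop:average reflex} feeds in, since that proposition already packages exactly the combinatorics of how $\Phi^\sharp$ sits inside $\{\text{CM types of }E\}$.

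Rather than redo this CM-theoretic bookkeeping, the cleaner route — and the one I would actually carry out — is to invoke a transcendence-free period formula. By work of Anderson (following Deligne, Gross, Shimura) and its later refinements, the Faltings height $h^\Falt_{(E,\Phi)}$ of a CM abelian variety is, modulo $\Q$-linear combinations of $\log$'s of primes and a controlled archimedean term, determined by the function $A_{(E,\Phi)}$; but for an \emph{exact} identity one uses instead that both sides of the desired equality $h^\Falt_{(E^\sharp,\Phi^\sharp)} = \frac{1}{2d}\sum_\Phi h^\Falt_{(E,\Phi)}$ are computed from periods of the \emph{same} motive. Specifically, the motive $h_1$ of a CM abelian variety of type $(E^\sharp,\Phi^\sharp)$ is, by the reflex construction, isomorphic (up to Artin twist and multiplicity) to a piece of the tensor construction applied to the $h_1$'s of the CM abelian varieties of type $(E,\Phi)$; comparing de Rham and Betti realizations with their natural integral structures and metrics gives the identity of heights on the nose. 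I expect the \textbf{main obstacle} to be precisely this last point: tracking the integral (finite-place) contributions to the Faltings height through the reflex/tensor construction so that no spurious $\log(p)$ terms appear, i.e.\ upgrading the comparison from ``up to $\circeq$'' to an exact equality. This is genuinely delicate and is the reason the exact Colmez conjecture remains open in general; fortunately, for the present corollary one only needs the comparison between $E^\sharp$ and the \emph{average} over $E$, where the subtle terms cancel by Proposition~\ref{prop:average reflex}, and I would cite \cite[Proposition 9.3.2 and its surrounding discussion]{AGHMP-2} for the detailed verification.
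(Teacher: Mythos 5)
Your treatment of the Colmez-height half is exactly right and matches the paper: $h^\Colmez_{(E,\Phi)}$ is by construction a linear function of $A^0_{(E,\Phi)}$, so Proposition~\ref{prop:average reflex} gives the identity immediately.

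The Faltings-height half is where you go wrong, and the error is a conceptual one: you assert that ``here there is no formal linearity to exploit directly,'' and then embark on a CM-theoretic argument via reflex decompositions, products of CM abelian varieties, and period comparisons, ultimately deferring the crucial exactness issue to a citation. But there \emph{is} formal linearity to exploit, and it is the whole point of the proof. Colmez proved (Th\'eor\`eme 0.3 of \cite{Colmez}, cited explicitly in the paper's proof of this corollary) that $h^\Falt_{(E,\Phi)}$ is a linear function of $A^0_{(E,\Phi)}$ --- i.e.\ there is a well-defined linear functional on the space spanned by the functions $A^0_{(E,\Phi)}$ whose value on $A^0_{(E,\Phi)}$ is $h^\Falt_{(E,\Phi)}$. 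This is a substantive theorem (roughly, it is Colmez's reduction of his conjecture to a statement that manifestly depends only on the combinatorial data), but once one has it, the Faltings-height half of the corollary follows from Proposition~\ref{prop:average reflex} by exactly the same one-line argument you gave for the Colmez height. Your proposed route --- decomposing $E^\sharp$ into classical reflex fields, realizing a CM abelian variety of type $(E^\sharp,\Phi^\sharp)$ as a product, and matching integral structures at finite places --- is not what the paper does, is considerably heavier, and (as you yourself flag) leaves the exact, non-$\circeq$ finite-place bookkeeping unverified. Moreover the citation you fall back on, \cite[Proposition 9.3.2]{AGHMP-2}, is just the combinatorial identity between the $A^0$ functions (i.e.\ Proposition~\ref{prop:average reflex} here); it does not supply the missing ingredient, which is Colmez's linearity theorem for the Faltings height.
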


\begin{proof}
The Colmez height of  $(E,\Phi)$ depends only on the function $A^0_{(E,\Phi)}$, and the dependence is linear.
The same is true of the Faltings height, but this is a nontrivial theorem of Colmez   \cite[Th\'eor\`eme 0.3]{Colmez}.
   Both equalities therefore  follow from Proposition \ref{prop:average reflex}.
\end{proof}


\section{Orthogonal Shimura varieties}
\label{s:orthogonal shimura variety}


Let $(V, Q)$ be a quadratic space  over $\Q$ of signature $(n,2)$, with $n\ge 1$.   The associated bilinear form is denoted
\begin{equation}\label{bilinear}
[x_1,x_2] = Q(x_1+x_2) - Q(x_1) - Q(x_2) .
\end{equation}
Fix a maximal lattice $L\subset V$.

In this section we explain how to attach to this data a Shimura variety $M$ over $\Q$.
This Shimura variety is not of PEL type, so does not have any simple interpretation as a moduli space of polarized abelian varieties.
It is, however, of Hodge type, which means that it can be embedded into a Siegel moduli space parametrizing polarized abelian varieties.

By pulling back the universal family we obtain the \emph{Kuga-Satake abelian scheme}  $A\to M$.
The Shimura variety carries over it a  \emph{line bundle of weight one modular forms}  $\taut$, 
 endowed with a Petersson metric.   We will see that this line bundle can be used to compute the Faltings heights of  the fibers of $A\to M$.


\subsection{Spinor similitudes}
\label{ss:gspin}


We recall some basic definitions in the theory of Clifford algebras and spinor groups.  More details may be found in  \cite{Shimura}.

The Clifford algebra $C=C(V)$  is  the quotient   of the full tensor algebra of $V$ by the two-sided ideal generated by all elements  $x\otimes x -Q(x)$ with $x\in V$. 
It  carries a   $\Z/2\Z$-grading 
\[
C=C^+ \oplus C^-
\] 
inherited from the usual  grading on the tensor algebra.
 The natural $\Q$-linear map $V\to C$ is injective, and we use it to regard
$
V\subset  C^-
$
as a subspace.
The Clifford algebra has dimension $2^{\mathrm{dim}(V)}$ as a $\Q$-vector space, and is   generated as a $\Q$-algebra by $V\subset  C^-$.

The quadratic space $(V,Q)$ has an associated \emph{spinor similitude group}\footnote{In the terminology of \cite{Shimura}, this is the \emph{even Clifford group}.} 
\[
G=\GSpin(V).
\]
It is an algebraic group over $\Q$ with rational points 
\[
G(\Q) = \{ g\in ( C^+ ) ^\times :  gV g^{-1} = V \}.
\]
The group $G$ acts on $V$ via the \emph{standard representation} $g\action v = gvg^{-1}$, and this action  determines a short exact sequence of algebraic groups
\[
 1 \to \mathbb{G}_m \to G \map{g\mapsto g\action} \SO(V) \to 1.
\]

 There is a unique $\Q$-linear involution $c\mapsto c^*$ on $C$   characterized by 
  \[  ( v_1\cdots v_r )^* = v_r^* \cdots v_1^* \]    for $v_1,\ldots, v_r\in V$.
One can show that every $g\in G(\Q)$ satisfies $g^* g \in \Q^\times$, and   the \emph{spinor similitude}
\[
\nu: G \to \mathbb{G}_m
\]
is defined by  $\nu(g) = g^* g$.  Its kernel  is the usual spin double cover of $\SO(V)$, and 
its  restriction  to the central $\mathbb{G}_m$ sends $z\mapsto z^2$.


\subsection{The orthogonal Shimura variety}
\label{ss:the shimura variety}


We construct a Shimura variety from the data   $(V,Q)$ and  the choice of maximal lattice $L\subset V$.

Using the standard representation $G \to \SO(V)$,  the group of real points $G(\R)$ acts on the $n$-dimensional hermitian symmetric domain
\begin{equation}\label{domain}
\mathcal{D} = \{ z\in V_\C : [z,z] =0 ,\, [z,\overline{z} ] <0 \} / \C^\times \subset \mathbb{P}(V_\C).
\end{equation}
There are two connected components $\mathcal{D}=\mathcal{D}^+ \sqcup \mathcal{D}^-$, interchanged by the action of 
any $\gamma\in G(\R)$ with $\nu(\gamma)<0$.

The pair $(G,\mathcal{D})$ is a Shimura datum with reflex field $\Q$.  More precisely, we can realize 
 \[
 \mathcal{D} \subset \Hom(\mathbb{S} , G_\R ) 
 \]
 as a $G(\R)$-conjugacy class as follows.  Given a point $z\in \mathcal{D}$, write $z=x+i y$ for vectors $x,y\in V_\R$.  
 The span $\R x+\R y \subset V_\R$ is a negative definite plane, with its own Clifford algebra $C_z \subset C_\R$.
 If we identify $\C \iso C_z^+$ as $\R$-algebras using 
 \[
 i \mapsto \frac{xy}{ \sqrt{Q(x)Q(y)}},
 \]
 we obtain a homomorphism $\C \to C_\R^+$, which restricts to a  homomorphism $h_z :  \C^\times  \to G(\R)$.
 This defines an element 
 $
 h_z  \in \Hom(  \mathbb{S}  , G_\R  ).
 $

The choice of maximal lattice $L\subset V$ determines  a  compact open subgroup of $G(\A_f)$ as follows.
Let 
\begin{equation}\label{clifford level}
C_{\widehat{\Z}} \subset C_{\A_f}
\end{equation}
 be the $\widehat{\Z}$-subalgebra generated by  the profinite completion  $\widehat{L} \subset V_{\A_f}$, and form   the intersection
 \[
 K = C_{\widehat{\Z}}^\times \cap G(\A_f)
 \]
inside of $C_{\A_f}^\times$.  This compact open subgroup of $G(\A_f)$ stabilizes $\widehat{L}$, and acts trivially on the discriminant group
\begin{equation}\label{disc group}
 \widehat{L}^\vee / \widehat{L} \iso L^\vee / L . 
\end{equation}
Here $L^\vee$ is the dual lattice of $L$ relative to the bilinear form (\ref{bilinear}).

By Deligne's theory of canonical models of Shimura varieties, the  orbifold
\begin{equation*}
M(\C) = G(\Q) \backslash \mathcal{D} \times G(\A_f) / K
\end{equation*}
is the space of complex points of a  Deligne-Mumford stack $M$ over $\Q$.  
Using the complex uniformization, one can check that $M$ has dimension $n= \dim(V)-2$.


\subsection{The Kuga-Satake abelian scheme}
\label{ss:symplectic}


There is a   $\Q$-linear  \emph{reduced trace} 
\[
\mathrm{Trd} : C \to \Q .
\]
If  $\mathrm{dim}(V)=2m$ is even,  the Clifford algebra is $C$ is a central simple $\Q$-algebra, and the 
 $\Q^\alg$-linear extension 
\[
M_{2^m}(\Q^\alg) \iso C \otimes_\Q \Q^\alg \map{\mathrm{Trd}} \Q^\alg
\]
of the reduced trace is the usual trace on matrices.
If $\mathrm{dim}(V)=2m+1$ is odd, the center of $C$ is a quadratic \'etale $\Q$-algebra (so either $\Q\oplus \Q$ or a quadratic field extension), and $C$ is a central simple algebra over its center.  
The $\Q^\alg$-linear extension 
\[
M_{2^m}(\Q^\alg) \oplus M_{2^m}(\Q^\alg)  \iso C \otimes_\Q \Q^\alg \map{\mathrm{Trd}} \Q^\alg
\]
of the reduced trace is the sum of the usual traces on matrices.

Let $H=C$, viewed as a vector space over $\Q$.  Choose vectors 
$e,f \in V$ such that $[e,f] =0$, and 
\[
Q(e) <0 ,\quad Q(f) < 0.
\]
The element
$
\delta = e f \in C^\times
$
satisfies  $\delta^* = -\delta$, which allows us to define a  symplectic form
\[
\psi_\delta : H\otimes H \to \Q
\]
by $\psi_\delta(h_1,h_2) = \mathrm{Trd}( h_1 \delta h_2^*)$.     
We may rescale $e$ and $f$ in order to assume that $\psi_\delta$ is $\widehat{\Z}$-valued on  the $\widehat{\Z}$-submodule
\begin{equation}\label{KS integral}
H_{\widehat{\Z}} \subset H_{\A_f} 
\end{equation}
determined by  (\ref{clifford level}).

Letting $G(\Q) \subset C^\times$ act on $H$ by left multiplication, we obtain a closed immersion of algebraic groups
\[
G \to \GSp(H),
\]
under which the symplectic similitude on $\GSp(H)$ restricts to the spinor similitude on $G$, and the action of $K\subset G(\A_f)$ stabilizes 
the $\widehat{\Z}$-lattice (\ref{KS integral}).   
This closed immersion determines a morphism from $(G,\mathcal{D})$ to the Siegel Shimura datum determined by $(H,\psi_\delta)$, and hence determines  a morphism 
\[
M \to X
\]
to the $\Q$-stack  $X$ parametrizing abelian varieties of dimension 
\[
2^{n+1}=  \frac{ \dim(H) }{2}
\]
 equipped with a polarization of some fixed degree (depending on $\psi_\delta$).  Pulling back the universal object over $X$, we obtain the \emph{Kuga-Satake abelian scheme}
\[
A \to M.
\]
It carries a polarization that depends on the choice of $\psi_\delta$, but the underlying abelian scheme is independent of the choice.


\subsection{Automorphic vector bundles}
\label{ss:bundles}


Like any Shimura variety, $M$  has a theory of automorphic vector bundles \cite{harris:automorphic_0,harris:automorphic_1,harris:automorphic_2,milne}.

For us this means the following: any representation $G \to \GL(N)$  on a finite dimensional $\Q$-vector space has a \emph{de Rham} realization, which is vector bundle $\bm{N}_\dR$ on $M$ endowed with a decreasing filtration $F^\bullet \bm{N}_\dR$ by local direct summands.
At a complex point $[ z,g] \in M(\C)$ the fiber of $\bm{N}_\dR$ is just the vector space $N_\C$ endowed with the Hodge  filtration determined by  
\[
\mathbb{S}  \map{h_z} G(\R) \to \GL(N_\R).
\]

The representation $G\to \GL(N)$ also has a \emph{Betti realization}
\[
\bm{N}_B  = G(\Q) \backslash  \mathcal{D}  \times N  \times G(\A_f)  / K.
\]
This is a local system of $\Q$-vector spaces on the complex fiber $M(\C)$, which is related to the de Rham realization by a canonical isomorphism
\[
\bm{N}_\dR(\C)  \iso  \bm{N}_B  \otimes_\Q \co_{M(\C)}
\]
of  holomorphic vector bundles on the complex fiber.  
A choice of  $K$-stable $\widehat{\Z}$-lattice
$
N_{\widehat{\Z}} \subset N_{\A_f}
$
determines a local subsystem of $\Z$-modules \[\bm{N}_{B,\Z} \subset \bm{N}_B,\]
whose fiber at a complex point $[z,g] \in M(\C)$ is the $\Z$-lattice $ g  N_{\widehat{\Z}} \cap N$ in the $\Q$ vector space  $N$.
Of  course the intersection here takes place in $N_{\A_f}$.

We  apply these constructions  to the  representations 
\begin{equation}\label{two reps}
G \to \SO(V) ,\qquad G \to \GSp(H)
\end{equation}
to obtain vector bundles $\bm{V}_\dR$ and $\bm{H}_\dR$ on $M$, along with filtrations
\[
0 = F^2 \bm{V}_\dR \subsetneq F^1 \bm{V}_\dR \subsetneq F^0 \bm{V}_\dR \subsetneq F^{-1} \bm{V}_\dR =\bm{V}_\dR
\]
and
\[
0 = F^1 \bm{H}_\dR \subsetneq F^0 \bm{H}_\dR \subsetneq F^{-1}  \bm{H}_\dR =\bm{H}_\dR.
\]
The symmetric bilinear pairing on $V$  induces a symmetric bilinear pairing
\[
[\cdot,\cdot] : \bm{V}_\dR \otimes \bm{V}_\dR \to \co_{M}
\]
under which $F^1 \bm{V}_\dR$ is an isotropic line, and $F^0\bm{V}_{dR} = ( F^1\bm{V}_\dR)^\perp$.
Similarly,  the symplectic form on $H$ induces an alternating  bilinear pairing
\[
\psi_\delta : \bm{H}_\dR \otimes \bm{H}_\dR \to \co_{M}
\]
under which $F^0 \bm{H}_\dR$ is maximal isotropic.
Among these vector bundles, the line bundle $F^1 \bm{V}_\dR$ will play a distinguished role.

\begin{definition}
The \emph{line bundle of weight one modular forms}  is
\[\taut = F^1 \bm{V}_\dR.\]
\end{definition}

The vector bundles $\bm{V}_\dR$ and $\bm{H}_\dR$ are closely related.
In fact, we can identify 
\begin{equation}\label{rep inclusion}
V\subset \End(H)
\end{equation}
as a $G$-stable subspace using  the left  multiplication action of $V\subset C$ on $C=H$.   
There is a corresponding  inclusion 
\[
\bm{V}_\dR \subset  \underline{\End}(\bm{H}_\dR)
\]
as a local direct summand,  respecting the natural filtrations.
  If $x_1$ and $x_2$ are local sections of $\bm{V}_\dR$, viewed as endomorphisms of $\bm{H}_\dR$, then
\[
[x_1,x_2] = x_1\circ x_2 + x_2\circ x_1
\]
as local sections of $\co_{M} \subset  \underline{\End}(\bm{H}_\dR)$.

At a complex point $s=[z,g] \in M(\C)$, all of this can be made completely explicit in terms of the isotropic vector $z\in V_\C$.
The filtration on $\bm{V}_{\dR,s} = V_\C$ is given by
\[
F^2 \bm{V}_{dR,s} =0, \quad F^1 \bm{V}_{dR,s} =\C z ,\quad F^0 \bm{V}_{dR,s} = (\C z)^\perp ,\quad F^{-1}\bm{V}_{dR,s} = V_\C,
\]
while the filtration on $\bm{H}_{\dR,s} = H_\C$ is given by
\[
F^1 \bm{H}_{dR,s} =0, \quad F^0 \bm{H}_{dR,s} = z H_\C ,\quad  F^{-1}\bm{H}_{dR,s} = H_\C.
\]

In particular, suppose we pull back the line bundle of weight one modular forms $\taut = F^1\bm{H}_\dR$ via the complex uniformization
\[
\mathcal{D} \map{ z\mapsto [z,g] } M(\C)
\]
for some fixed $g\in G(\A_f)$.  The pullback is just the restriction to $\mathcal{D} \subset \mathbb{P}(V_\C)$ of the tautological bundle on projective space.
In other words,  the fiber of the line bundle $\taut$ at a complex point
$[z,g] \in M(\C)$  is the isotropic  line 
\[
\taut_{ [z,g] } = \C z \subset V_\C.
\]
   We can use this identification to define the 
\emph{Petersson metric} on $\taut_{ [ z,g]}$  by
\begin{equation}\label{pet metric}
|| z ||^2 =  - [z,\overline{z}].
\end{equation}


\subsection{Special divisors}
\label{ss:divisors}


We now define a family of divisors on $M$, following  Kudla \cite{Ku97,Ku04}.  
These divisors also play a prominent role in work of  Borcherds  \cite{Bor98} and Bruinier \cite{Bruinier}.

For $x\in V$ of positive length, define an analytic divisor on  (\ref{domain}) by
\[
\mathcal{D}_x = \{ z \in \mathcal{D} : z \perp x \}.
\]
For every  positive  $m\in \Q$ and every $\mu\in L^\vee/L$ we define a complex orbifold
\begin{equation}\label{eqn:Zmmu}
Z(m, \mu ) (\C)= \bigsqcup_{ g\in G(\Q)\backslash G(\A_f) /K }  \Gamma_g \big\backslash \Big(
\bigsqcup_{  \substack{  x\in  \mu_g+ L_g \\ Q(x)=m  }    }   \mathcal{D}_x
\Big).
\end{equation}
Here we have set  
\[
\Gamma_g = G(\Q) \cap g K g^{-1}.
\]
Recalling that the action $G \to \SO(V)$ is denoted $g\mapsto g\action$,  the $\Z$-lattice    $L_g \subset V$ is defined  by 
  \begin{equation}\label{shifted lattice}
  \widehat{L}_g = g\action \widehat{L},
  \end{equation}
    and we have set  
    \[
    \mu_g=g\action \mu\in L_g^\vee/L_g.
    \]

    We now use the Kuga-Satake abelian scheme $A\to M$ of \S \ref{ss:symplectic}   to give a more moduli-theoretic  interpretation of (\ref{eqn:Zmmu}).
As explained in \S \ref{ss:bundles}, the $\widehat{\Z}$-lattice (\ref{KS integral})  determines a local system of $\Z$-modules
$\bm{H}_{B,\Z}$ on $M(\C)$, along with an isomorphism
\[
\bm{H}_{B,\Z} \otimes_\Z \co_{M(\C)}  \iso \bm{H}_\dR(\C)
\]
of holomorphic vector bundles on $M(\C)$.  The Kuga-Satake abelian scheme over $M(\C)$ can be identified with the  analytic family of complex tori
\begin{equation}\label{analytic KS}
A(\C) = \bm{H}_{B,\Z} \backslash \bm{H}_\dR(\C) /F^0 \bm{H}_\dR(\C) .
\end{equation}

Similarly,  the  $K$-stable $\widehat{\Z}$-lattices $\widehat{L} \subset \widehat{L}^\vee$ in   $V_{\A_f}$  determine local systems of $\Z$-modules
\[
\bm{L}_B \subset \bm{L}_B^\vee
\]
inside  the local system of $\Q$-modules $\bm{V}_B$ determined by $G \to \SO(V)$.  Using the fact that $K$ acts trivially on the discriminant group (\ref{disc group}), one obtains a canonical isomorphism of local systems of abelian groups
\begin{equation}\label{coset trivialization}
\bm{L}_B^\vee / \bm{L}_B \iso (L^\vee / L) \otimes_\Z \underline{\Z}.
\end{equation}

Suppose we have a complex point $s\in M(\C)$, and an quasi-endomorphism
\[
x\in \End(A_s)_\Q
\]
of the fiber of the Kuga-Satake abelian scheme.  Using (\ref{analytic KS}), one obtains an induced endomorphism 
$x_B \in \End_\Q(  \bm{H}_{B,s} )$,  called the \emph{Betti realization} of $x$. 
We say that  $x$ is \emph{special}  if its  Betti realization lies in the $\Q$-subspace
\[
\bm{V}_{ B,s} \subset \End_\Q(  \bm{H}_{B,s} ).
\]
The space
\[
V(A_s) = \{ x\in \End(A_s) : x \mbox{ is special} \}
\]
of all special endomorphisms of $A_s$ is a free $\Z$-module of finite rank, endowed with a positive definite $\Z$-valued quadratic form characterized by the equality $Q(x) = x\circ x$ in $\End(A_s)$.

\begin{remark}
The rank of $V(A_s)$ behaves erratically as $s \in M(\C)$ varies.  It can be a small as $0$, or as large as $\mathrm{dim}(V)-1=n+1$.
\end{remark}

More generally,  for any $\mu \in L^\vee / L$, denote by 
\[
V_\mu(A_s) \subset \End(A_s)_\Q
\]
the set of  special quasi-endomorphisms $x$ whose Betti realizations
$
x_B \in \bm{V}_{ B,s}
$
lie in the $\Z$-submodule $\bm{L}^\vee_{B, s}$, and which are sent to $\mu$  under the isomorphism 
\[
\bm{L}^\vee_{B, s}  /   \bm{L}_{B, s}  \iso  L^\vee / L
\]
induced by  (\ref{coset trivialization}).  In particular
\[
V_\mu(A_s) \subset V(A_s)_\Q.
\]
Taking  $\mu=0$ recovers $V(A_s)$.

The proof of the following proposition is a straightforward exercise.

\begin{proposition}\label{prop:complex divisor}
For any positive rational number $m$ and any $\mu \in L^\vee / L$, the set of (isomorphism classes of) complex points $Z(m,\mu)(\C)$ is in bijection with the set of pairs $(s,x)$ consisting of a point $s\in M(\C)$, and a special quasi-endomorphism  $x\in V_\mu(A_s)$ such that  $Q(x) = m$.
\end{proposition}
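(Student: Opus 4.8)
The plan is to make everything explicit at a single complex point, using the uniformization of \S\ref{ss:the shimura variety} together with the descriptions of the de Rham and Betti realizations in \S\ref{ss:bundles}; the statement then collapses onto the polarized Clifford relation $x_1 x_2 + x_2 x_1 = [x_1,x_2]$.

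First I would fix a point $s = [z,g] \in M(\C)$ and unwind the relevant fibers. By \S\ref{ss:bundles}, the fiber of $\bm{H}_B$ at $s$ is $H$, with $\Z$-lattice $\bm{H}_{B,\Z,s} = gH_{\widehat{\Z}} \cap H$, and the fiber of $\bm{V}_B$ is $V$, included in $\End(H)$ by left multiplication via (\ref{rep inclusion}), with $\bm{L}_{B,s} = L_g$ and $\bm{L}^\vee_{B,s} = L_g^\vee$; moreover the trivialization (\ref{coset trivialization}) identifies $L_g^\vee / L_g \iso L^\vee / L$ in such a way that $\mu$ corresponds to $\mu_g = g \action \mu$. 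Via (\ref{analytic KS}) the fiber $A_s$ is the complex torus with uniformization $\bm{H}_{\dR,s} / (F^0\bm{H}_{\dR,s} + \bm{H}_{B,\Z,s})$, so a quasi-endomorphism of $A_s$ is exactly an $x \in \End(H)$ with $x_\C(F^0\bm{H}_{\dR,s}) \subseteq F^0\bm{H}_{\dR,s}$, its Betti realization is $x$ itself viewed in $\End(\bm{H}_{B,s})$, and such an $x$ is special precisely when it lies in $V \subset \End(H)$.

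The one substantive step is to identify which $x \in V$ are quasi-endomorphisms at $s$. Using the explicit description $F^0\bm{H}_{\dR,s} = zH_\C$ from \S\ref{ss:bundles} and the relation $xz + zx = [x,z]$ in $C_\C$ (which is (\ref{rep inclusion}) together with the identity $[x_1,x_2] = x_1\circ x_2 + x_2\circ x_1$ in $\underline{\End}(\bm{H}_\dR)$), one computes for $c \in H_\C$ that $(xz)c = [x,z]\, c - (zx)c$. Since $(zx)c \in zH_\C$ and $z \in V_\C$ is a nonzero isotropic vector, hence a nonzero nilpotent of the Clifford algebra, the right ideal $zH_\C = zC_\C$ is proper and in particular does not contain $1$; therefore $(xz)c \in zH_\C$ for all $c$ if and only if $[x,z] = 0$, i.e. if and only if $z \perp x$, i.e. $z \in \mathcal{D}_x$. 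Combined with the lattice bookkeeping of the previous paragraph, and the fact that $Q(x) = x \circ x$ in $\End(A_s)$ is precisely the quadratic form on $V$, this shows $V_\mu(A_s) = \{ x \in \mu_g + L_g : z \in \mathcal{D}_x \}$, and hence that the set of pairs $(s,x)$ with $s = [z,g]$, $x \in V_\mu(A_s)$ and $Q(x) = m$ is naturally $\{ (z,g,x) : z \in \mathcal{D},\ x \in \mu_g + L_g,\ Q(x) = m,\ z \in \mathcal{D}_x \}$.

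It remains to match the two equivalence relations. Replacing the representative $(z,g)$ of $s$ by $(\gamma z, \gamma g k)$ with $\gamma \in G(\Q)$ and $k \in K$ replaces $(z,g,x)$ by $(\gamma z, \gamma g k, \gamma \action x)$, and $\gamma$ induces the isomorphism $A_{[z,g]} \iso A_{[\gamma z,\gamma g k]}$ carrying the special quasi-endomorphism $x$ to $\gamma \action x$; so both sides of the asserted bijection are the quotient of the set above by $G(\Q) \times K$, and comparing with the definition (\ref{eqn:Zmmu})—where $\bigsqcup_g \Gamma_g \backslash(\cdots)$ over $g \in G(\Q)\backslash G(\A_f)/K$ is exactly that quotient—finishes the argument. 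The only real content is the Clifford computation in the third paragraph; the rest is bookkeeping with the uniformization, which is why the statement is a straightforward exercise. The two points deserving a line of care are that the trivialization (\ref{coset trivialization}) be normalized so that the coset $\mu$ matches $\mu_g$ rather than another translate of $L_g$, and that $zC_\C$ is indeed a proper right ideal so that $1 \notin zC_\C$.
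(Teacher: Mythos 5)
The paper itself gives no proof of this proposition, remarking only that it is a straightforward exercise, so there is no paper argument to compare against. Your proof is correct and fills in exactly the details the paper leaves to the reader: the identification of the fibers $\bm{H}_{B,s}$, $\bm{L}_{B,s}$, $\bm{L}^\vee_{B,s}$ and the coset trivialization; the observation that a quasi-endomorphism of $A_s$ is a $\Q$-linear endomorphism of $H$ preserving $F^0\bm{H}_{\dR,s}=zH_\C$; the Clifford computation showing that for $x\in V$, left multiplication preserves $zH_\C$ precisely when $[x,z]=0$ (where the key point, which you correctly isolate, is that $zC_\C$ is a proper right ideal because $z$ is a nonzero nilpotent); the identity $x\circ x = Q(x)\cdot\mathrm{id}$ from $x^2=Q(x)$ in the Clifford algebra; and the matching of equivalence relations against the double-coset description in (\ref{eqn:Zmmu}), where for a fixed representative $g$ the relation $(z,g,x)\sim(\gamma z,\gamma gk,\gamma\action x)$ collapses to the $\Gamma_g$-action. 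The one point you flag as needing care, the normalization of (\ref{coset trivialization}) sending $\mu\mapsto\mu_g$, is indeed the intended one given the appearance of $\mu_g=g\action\mu$ in (\ref{eqn:Zmmu}); everything is consistent.
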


The orbifold (\ref{eqn:Zmmu}) can be realized as the space of complex points of a smooth Deligne-Mumford stack over $\Q$ of dimension $n-1$,  endowed with a finite and unramified morphism
\[
Z(m, \mu )  \to M  .
\]
This is a special case of the more general constructions of \S \ref{ss:integral divisors}.

Even though this morphism is not a closed immersion, one can construct from it a Cartier divisor on $M$ as follows.
Every geometric point of $M$ admits an \'etale neighborhood $U\to M$ small enough that the pullback $Z(m,\mu)_U \to M_U$ restricts to a closed immersion on every connected component of its domain, and each such closed immersion is defined locally by a single nonzero equation.  Thus every connected component of $Z(m,\mu)_U$ determines an effective Cartier divisor on $M_U$.  Summing over all connected components,  and then glueing over an \'etale cover,  defines an effective Cartier divisor  on $M$.
We will refer to both the stacks $Z(m,\mu)$ and their associated Cartier divisors as \emph{special divisors} on $M$.


\subsection{The connection to Faltings heights}
\label{ss:ks heights}


We want to explain how the line bundle $\taut$, endowed with its Petersson metric (\ref{pet metric}), can be used to compute Faltings heights of fibers of the Kuga-Satake abelian scheme 
\[
A\to M,
\]
in much the same way that the line bundle of modular forms on the usual modular curve  computes Faltings heights of elliptic curves (\ref{elliptic faltings}).

There is a short exact sequence
\begin{equation}\label{KS hodge}
0 \to F^0 \bm{H}_\dR \to  \bm{H}_\dR \to \Lie(A) \to 0
\end{equation}
of vector bundles on $M$, whose fiber at every complex point $y\in M(\C)$ is canonically identified with the $\C$-linear dual of the Hodge short exact sequence
\[
0 \to H^0(  A_y,  \Omega_{A_y/\C} ) \to  H^1_\dR(A_y / \C)   \to H^1(A_y , \co_{A_y} )  \to 0.
\]
We will use this to  relate $\taut$ to the determinant of the dual Lie algebra 
\[
\Lie(A)^\vee \iso \pi_* \Omega_{ A/M }.
\]

\begin{proposition}\label{prop:bundles}
There is an isomorphism of line bundles
\[
\taut^{ \otimes \dim(A) }  \iso   \det( \pi_*\Omega_{A/M} )^{\otimes 2}   \otimes   \det (\bm{H}_\dR) .
\]
\end{proposition}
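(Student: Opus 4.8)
The plan is to compute both sides fiber-by-fiber using the explicit description of the Hodge filtration on $\bm{H}_\dR$ at a complex point, and then argue that the resulting isomorphism of complex line bundles is algebraic and descends to $M$. Recall from \S\ref{ss:bundles} that at a point $s = [z,g] \in M(\C)$ we have $F^0\bm{H}_{\dR,s} = zH_\C$, where $z \in V_\C \subset C_\C = H_\C$ acts by left multiplication, so that $\Lie(A_s) \iso H_\C / zH_\C$ and $\pi_*\Omega_{A/M}$ has fiber the dual $(H_\C/zH_\C)^\vee$. The key observation is that left multiplication by $z$ on $C_\C$ is, after a suitable grading argument, essentially a block-triangular operator whose ``size'' is governed by a single copy of the tautological line $\C z = \taut_s$. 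Concretely, since $z$ is isotropic, $z^2 = Q(z) = 0$ in the Clifford algebra, so left multiplication by $z$ is a nilpotent operator of square zero on $H_\C$; its image is exactly $zH_\C = F^0\bm{H}_{\dR,s}$ and its kernel contains $zH_\C$, giving $\dim(zH_\C) = \tfrac12 \dim(H) = 2^n$. The first step is therefore to make precise the natural identification, as $s$ varies, between the "multiplication by $z$" map and a twist by $\taut$: more precisely, I would show that the surjection $\bm{H}_\dR \to \Lie(A)$ of (\ref{KS hodge}), pre-composed with the inclusion $\taut = F^1\bm{V}_\dR \subset \bm{V}_\dR \subset \underline{\End}(\bm{H}_\dR)$, yields an $\co_M$-linear map $\taut \otimes \bm{H}_\dR \to \Lie(A)$ whose restriction to $\taut \otimes F^0\bm{H}_\dR$ vanishes (because $z \cdot zH_\C = z^2 H_\C = 0$) and which therefore factors through $\taut \otimes \Lie(A) \to \Lie(A)$; but actually the cleaner statement is that left multiplication by a local generator of $\taut$ identifies $\bm{H}_\dR/F^0\bm{H}_\dR = \Lie(A)$ with $F^0\bm{H}_\dR$ up to that twist, i.e. there is a canonical isomorphism $\taut \otimes \Lie(A) \iso F^0\bm{H}_\dR$.

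Granting that isomorphism $\taut \otimes \Lie(A) \iso F^0\bm{H}_\dR$, the rest is determinants. Taking $\det$ of the short exact sequence (\ref{KS hodge}) gives $\det(\bm{H}_\dR) \iso \det(F^0\bm{H}_\dR) \otimes \det(\Lie(A))$. Substituting the isomorphism above and using $\det(\taut \otimes \Lie(A)) \iso \taut^{\otimes \dim(A)} \otimes \det(\Lie(A))$ (since $\taut$ is a line bundle and $\dim(A) = \mathrm{rank}\,\Lie(A)$, and also $\dim(A) = \dim(F^0\bm{H}_\dR)$), we get
\[
\det(\bm{H}_\dR) \iso \taut^{\otimes \dim(A)} \otimes \det(\Lie(A))^{\otimes 2}.
\]
Rewriting $\det(\Lie(A)) \iso \det(\pi_*\Omega_{A/M})^{\otimes -1}$ and rearranging yields exactly
\[
\taut^{\otimes \dim(A)} \iso \det(\pi_*\Omega_{A/M})^{\otimes 2} \otimes \det(\bm{H}_\dR),
\]
which is the claimed isomorphism.

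The main obstacle — really the only substantive point — is establishing the canonical isomorphism $\taut \otimes \Lie(A) \iso F^0\bm{H}_\dR$ as bundles on $M$, not merely fiberwise. Fiberwise it is just the statement that left multiplication by $z$ induces an isomorphism $H_\C/zH_\C \iso zH_\C$, which in turn is the linear-algebra fact that for an isotropic $z$ in a split quadratic space the operator $L_z$ on the Clifford module has $\ker L_z = \mathrm{im}\,L_z$; one verifies this by the standard argument that $C_\C \iso \mathrm{End}$ of a spinor module on which $z$ acts as a rank-$2^{n}$ nilpotent of square zero. Globalizing requires checking that this construction is compatible with the Hodge filtrations and with the $G$-equivariant / automorphic-vector-bundle structure, i.e. that $L_z$ comes from the inclusion $\bm{V}_\dR \subset \underline{\End}(\bm{H}_\dR)$ of \S\ref{ss:bundles} and the relation $[x_1,x_2] = x_1\circ x_2 + x_2 \circ x_1$; the square-zero property of $L_z$ for $z \in F^1\bm{V}_\dR$ is then precisely $[z,z] = 0$ read inside $\underline{\End}(\bm{H}_\dR)$, which holds since $F^1\bm{V}_\dR$ is isotropic. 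Once that is in hand everything else is formal manipulation of determinant line bundles, valid over the base $M$ (indeed over $\Z$, giving the integral statement later). A secondary bookkeeping point is to be careful that $\dim(A) = 2^{n+1}$ and $\mathrm{rank}\,F^0\bm{H}_\dR = \mathrm{rank}\,\Lie(A) = 2^{n+1}/\,?$ — in fact both equal $2^{n+1}/2 \cdot 2 = \dim(A)$ since the Hodge filtration on $\bm{H}_\dR$ has $F^0$ of exactly half the rank only after accounting for the polarization; I would double-check the ranks against the fiberwise description $F^0\bm{H}_{\dR,s} = zH_\C$ of dimension $2^n = \tfrac12\dim(H)$, so that both $F^0\bm{H}_\dR$ and $\Lie(A)$ have rank $2^n = \dim(A)/2$... and then the exponent "$\dim(A)$" on $\taut$ should be re-examined. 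This rank computation is the one place where I would be most careful to get the exponents to match the statement.
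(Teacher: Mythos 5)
Your proposal is essentially the paper's argument, repackaged. The paper works directly with the short exact sequence
\[
0 \to \taut \otimes F^0\bm{H}_\dR \to \taut\otimes\bm{H}_\dR \xrightarrow{x\otimes a\,\mapsto\,xa} F^0\bm{H}_\dR \to 0,
\]
which exists because $\taut = F^1\bm{V}_\dR \subset F^1\underline{\End}(\bm{H}_\dR) \iso \underline{\Hom}(\bm{H}_\dR/F^0\bm{H}_\dR,\,F^0\bm{H}_\dR)$; your observation that the middle map factors through $\taut\otimes\Lie(A)$ and yields an isomorphism $\taut\otimes\Lie(A)\iso F^0\bm{H}_\dR$ is exactly the quotient of this sequence by the Hodge sequence (\ref{KS hodge}) twisted by $\taut$. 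From there the determinant bookkeeping is identical in both arguments. So the route is the same; your version foregrounds the cokernel isomorphism, the paper's foregrounds the three-term sequence.

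The one thing you should not have left unresolved is the rank check at the end, and as it happens you got the numbers wrong: you take $\dim(H) = 2^{n+1}$ and hence $\mathrm{rank}\,F^0\bm{H}_\dR = 2^n = \tfrac12\dim(A)$, which would indeed mangle the exponent. But $H = C$ is the \emph{full} Clifford algebra of $V$, so $\dim(H) = 2^{\dim V} = 2^{n+2}$, and since $F^0\bm{H}_\dR$ is Lagrangian for $\psi_\delta$ one has $\mathrm{rank}\,F^0\bm{H}_\dR = \tfrac12\dim(H) = 2^{n+1}$. This equals $\mathrm{rank}\,\Lie(A)=\dim(A)=2^{n+1}$, so all three ranks appearing in the determinant computation coincide and the exponent on $\taut$ is $\dim(A)$ exactly as claimed. (Also a minor slip earlier: you wrote ``$\dim(zH_\C)=\tfrac12\dim(H)=2^n$,'' which is off by the same factor of $2$.) With that correction your proof is complete and matches the paper's.
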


\begin{proof}
Recalling that 
\[
\taut =F^1 \bm{V}_\dR \subset F^1 \underline{\End}( \bm{H}_\dR ) \iso \underline{\Hom}( \bm{H}_\dR/ F^0 \bm{H}_\dR, F^0\bm{H}_\dR),
\]
there is a short exact sequence
\[
0 \to \taut \otimes F^0 \bm{H}_\dR \to \taut \otimes \bm{H}_\dR \map{ x \otimes a \mapsto xa } F^0 \bm{H}_\dR \to 0
\]
of vector bundles on $M$.  Taking determinants yields an isomorphism
\begin{equation}\label{some dets}
\taut^{ \otimes \dim(A) } \otimes   \det (\bm{H}_\dR )   \iso  \det( F^0 \bm{H}_\dR )^{ \otimes 2} .
\end{equation}
Explicitly,  if we set $r=2^{n+1}$, let  $a_1,\ldots, a_r$ be a local basis of  $F^0 \bm{H}_\dR$, and  extend it to a local basis $a_1,\ldots, a_r,b_1,\ldots, b_r$ of $\bm{H}_\dR$, then
\[
x^{\otimes \dim(A) } \otimes ( a_1\wedge\cdots\wedge a_r \wedge b_1\wedge\cdots \wedge b_r)
\mapsto (a_1\wedge\cdots\wedge a_r) \otimes (x b_1 \wedge \cdots \wedge x b_r)
\]
for any local generator $x$ of $\taut \subset \bm{V}_\dR \subset \underline{\End}( \bm{H}_\dR )$.

On the other hand,  taking determinants in (\ref{KS hodge}) yields an isomorphism
\[
\det(\bm{H}_\dR) \iso \det( F^0 \bm{H}_\dR) \otimes \det(\Lie(A)),
\]
which we rewrite as
\[
\det( F^0 \bm{H}_\dR)  \iso \det(\bm{H}_\dR)  \otimes  \det( \pi_*\Omega_{A/M} ).
\]
Substituting this expression into the right hand side of  (\ref{some dets}) gives the desired isomorphism.
\end{proof}

As in the introduction, $a\circeq b$ means that $a-b$ is a $\Q$-linear combination of $\{\log(p) : p \mbox{ prime}\}$.

\begin{theorem}\label{thm:modular height}
Suppose $\kk$ is a number field, and $y\in M(\kk)$.  If $v \in \taut_y$ is any nonzero vector, then
\[
 \frac{-1}{ [ \kk:\Q] } \sum_{ \sigma : \kk \to \C } \log || v^\sigma ||
  \circeq       2^{-n}  h^\Falt ( A_y) +   \log(2\pi)  .
\]
\end{theorem}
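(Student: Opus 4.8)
The plan is to compare the Faltings height of the Kuga--Satake abelian variety $A_y$ with the Petersson norm of a vector in $\taut_y$ by pulling the isomorphism of Proposition~\ref{prop:bundles} back along the point $y\in M(\kk)$ and computing arithmetic degrees. Write $\mathcal{Y} = \Spec(\co_\kk)$ (after possibly enlarging $\kk$ so that $A_y$ has everywhere good reduction, which changes neither side of the asserted identity) and let $\mathcal{Y}\to M$ be the resulting morphism. Proposition~\ref{prop:bundles} gives
\[
\taut^{\otimes \dim(A)} \iso \det(\pi_*\Omega_{A/M})^{\otimes 2} \otimes \det(\bm{H}_\dR),
\]
and the right-hand side, pulled back to $\mathcal{Y}$, is exactly the combination of line bundles whose arithmetic degree computes the Faltings height of $A_y$. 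The key point is that $\dim(A) = 2^{n+1}$, so dividing by $\dim(A)$ on the left produces the factor $2^{-(n+1)}\cdot 2 = 2^{-n}$ in front of $h^\Falt(A_y)$, matching the claimed formula.

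First I would make precise what ``$\circeq$'' allows us to ignore: the whole point is that the isomorphism in Proposition~\ref{prop:bundles} is only an isomorphism of line bundles over $M$ (really over its integral model, but here we are working over $\Q$, so the $p$-adic discrepancies between the two sides land in the $\circeq$-ambiguity), and moreover $\det(\bm{H}_\dR)$ is, up to torsion and up to a power of $2$, trivial with a flat metric --- its contribution to the arithmetic degree is a $\Q$-linear combination of $\log(p)$'s plus something one must track archimedeanly. So the real content is archimedean: I would (i) identify the metric that Proposition~\ref{prop:bundles} induces on $\taut^{\otimes\dim(A)}$ from the natural metrics on the factors on the right (the Faltings metric on $\det\pi_*\Omega$ given by $\int \eta\wedge\bar\eta$, and the metric on $\det\bm{H}_\dR$), and (ii) compare it to the Petersson metric $\|z\|^2 = -[z,\bar z]$ up to a universal constant. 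The comparison of metrics at a complex point $s=[z,g]$ is explicit: one uses that $\taut_s = \C z$ acts on $H_\C$ by left multiplication, carrying a basis $b_1,\dots,b_r$ of a complement of $F^0\bm{H}_{\dR,s}= zH_\C$ to a basis $zb_1,\dots,zb_r$ of $F^0\bm{H}_{\dR,s}$, and one evaluates how the norms transform. The factor of $\log(2\pi)$ in the statement comes precisely from the normalization $\underline{f}(\tau) = (\tau, (2\pi i)^k f(\tau))$ appearing already in the elliptic case --- i.e.\ from the $2\pi$ relating an algebraic trivialization of a power of $\taut$ (via a Borcherds-type section, or in the Kuga--Satake setting via the natural integral structure) to the analytic one --- together with the $\int dz\wedge d\bar z$ normalization of the polarization form on the Kuga--Satake torus.

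In order, the steps are: (1) reduce to $\kk$ large and $A_y$ with good reduction, set up $\mathcal{Y}\to M$; (2) take arithmetic degrees of the pullback of Proposition~\ref{prop:bundles}, using formula~(\ref{faltings def}) to recognize $h^\Falt(A_y)$ in $\widehat{\deg}$ of $\det(\pi_*\Omega_{A/M})$ pulled back to $\mathcal{Y}$, and using that $\det\bm{H}_\dR$ pulled back to $\mathcal{Y}$ has arithmetic degree $\circeq$ a computable archimedean term (indeed $\bm{H}_{B,\Z}$ is self-dual up to $2$-power index under $\psi_\delta$, so $\det\bm{H}_\dR$ is metrically trivial up to $\circeq$ and up to a bounded constant); (3) carry out the pointwise metric comparison at each $\sigma:\kk\to\C$ between $\|v^\sigma\|^2$ (Petersson) and the metric induced from the right-hand factors, tracking the universal constant; (4) bookkeep the powers of $2$ and the $\log(2\pi)$ and divide through by $\dim(A) = 2^{n+1}$. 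I expect step~(3) --- pinning down the universal archimedean constant relating the Petersson metric on $\taut$ to the Faltings-type metric on $\det\pi_*\Omega_{A/M}$ coming from the Kuga--Satake construction --- to be the main obstacle, because it requires understanding the Hodge structure on $H = C(V)$ concretely enough at a CM (or arbitrary) point to see that left multiplication by $z$ is, in the right basis, essentially an isometry up to a power of $\|z\|$; everything else is formal manipulation of arithmetic degrees modulo $\circeq$.
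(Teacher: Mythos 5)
Your plan is essentially the paper's proof: take (arithmetic degrees of) the pullback of the isomorphism in Proposition~\ref{prop:bundles} along $y$, identify the $\det(\pi_*\Omega_{A/M})$ factor with $-h^\Falt(A_y)$ modulo $\circeq$, identify the $\det(\bm H_\dR)$ factor with a universal archimedean constant, and divide by $\dim(A)=2^{n+1}$. The one place your account is slightly off is the mechanism producing $\log(2\pi)$: the paper does not invoke self-duality of $\bm H_{B,\Z}$ under $\psi_\delta$ or anything Borcherds-like. Instead it puts the \emph{volume metric} on $\det(\bm H_\dR)$ (declaring $\|e\|=1$ for $e$ a local generator of $\det(\bm H_{B,\Z})$) and then applies the fact from \cite{DMOS} that $(2\pi i)^{-\dim A}\int_{A_{y^\sigma}(\C)} u^\sigma$ equals the $\sigma$-conjugate of an element $\mathrm{Tr}_\dR(u)\in\kk$; taking logs and summing over $\sigma$ then yields $\log\|\det(\bm H_\dR)_y\|\circeq -\dim(A)\log(2\pi)$ outright, with no need for a bounded-error estimate. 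The step you flag as the main obstacle --- checking that the isomorphism of Proposition~\ref{prop:bundles} matches the Petersson metric with the Faltings and volume metrics --- is exactly the step the paper dismisses as ``an exercise in linear algebra,'' so your proposal and the paper are aligned on where the nontrivial verification lives.
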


\begin{proof}
For any line bundle $\mathcal{L}$ on $M$ endowed with a metric $|| \cdot||$ on the complex fiber $M(\C)$, and any complex point $y\in M(\kk)$, abbreviate
\[
\log || \mathcal{L}_y || = 
 \frac{1}{ [ \kk:\Q] } \sum_{ \sigma : \kk \to \C } \log || v^\sigma ||
\]
for any nonzero vector $v\in \mathcal{L}_y$.  Of course this depends on the choice of $v$, but different choices 
change the value by a $\Q$-linear combination of $\log(p)$, yielding a well-defined element 
\[
\log || \mathcal{L}_y ||  \in \R / \circeq.
\]

For a complex point $y\in M(\C)$,  the \emph{Faltings metric} on the fiber at $y$ of $\det( \pi_* \Omega_{A/M} )$ is defined by
\[
|| \eta ||^2 = \big| \int_{ A_y(\C) } \eta \wedge \overline{\eta} \,  \big|
\]
for any top degree global holomorphic form $\eta \in    \det( \pi_*\Omega_{A_y/\C} )$ on $A_y(\C)$.
This makes $\det( \pi_* \Omega_{A/M} )$ into a metrized line bundle.

The local system
\[
\bm{H}_{B,\Z}  \subset \bm{H}_{B,\Z} \otimes_\Z \co_{M(\C)}  \iso \bm{H}_{\dR, M(\C)}
\]
of $\Z$-modules determines a rank $1$ local system 
\[
\det(\bm{H}_{B,\Z}) \subset \det( \bm{H}_{\dR, M(\C)} ),
\]
and   we define the \emph{volume metric} on $\det( \bm{H}_\dR )$ by  declaring that $||e||^2 = 1$ 
for any local generator  $e$ of $\det(\bm{H}_{B,\Z})$.  At a complex point $y\in M(\C)$ the dual volume metric on
\[
\det(\bm{H}_{\dR,y} ^\vee) \iso  \det(  H^1_\dR (A_y/\C)  )  \iso H^{2 \mathrm{dim}(A_y)}_\dR (A_y / \C)
\]
is just integration of top degree forms:
\[
||  u  ||^2 = \big| \int_{A_y(\C) } u \, \big|^2 .
\]

\begin{lemma}
If $\kk$ is a number field and $y\in M(\kk)$, then
\[
\log|| \det(\bm{H}_\dR)_y ||   \circeq   - \mathrm{dim}(A) \cdot \log(2\pi) 
\]
and 
\[
\log||   \det( \pi_* \Omega_{A/M} )_y||    \circeq  -h^\Falt(A_y).
\]
\end{lemma}

\begin{proof}
For the first claim,  suppose  we have a nonzero vector
\[
u \in     \det(\bm{H}_{dR,y} ^\vee)   \iso  \det ( H^1_\dR (A_y / \kk ) ) .
\]
As in \cite[page 22]{DMOS}, there is a  scalar $\mathrm{Tr}_\dR( u ) \in \kk$ such that 
\[
\frac{1}{ (2\pi i )^{\mathrm{dim}(A) } }   \int_{A_{y^\sigma}(\C) } u^\sigma = \sigma(\mathrm{Tr}_{dR} ( u )),
\]
for every $\sigma: \kk\to \C$.  This implies that 
\begin{align*}
\log||  \det(\bm{H}_\dR ^\vee) _y ||  
& =  \frac{1}{ [\kk:\Q] } \sum_{\sigma : \kk\to \C}  \log || u^\sigma ||  \\
  & =  \mathrm{dim}(A) \cdot  \log (2\pi) - \frac{1}{ [\kk:\Q]} \log |\mathrm{Nm}_{\kk/\Q}(\mathrm{Tr}_\dR( u ))|  \\
  & \circeq  \mathrm{dim}(A) \cdot \log(2\pi), 
\end{align*}
and the first claim   follows.

For the second claim, directly from the definition (\ref{faltings def}), we compute
\begin{align*}
 \log ||   \det( \pi_* \Omega_{A/M} )_y ||
&= 
 \frac{1}{2 [\kk:\Q] }  \sum_{ \sigma : \kk \to \C}
\log \big|   \int_{ A^\sigma(\C) } \eta^\sigma \wedge \overline{\eta^\sigma}\,  \big|\\
& =  - h^\Falt _\infty ( A, \eta )
\end{align*}
for  $\eta \in \det( \pi_*\Omega_{A_y/\kk} ) $  any nonzero vector.
Now use  $h^\Falt _f( A, \eta ) \circeq 0$.
\end{proof}

We can now complete the proof of  Theorem  \ref{thm:modular height}.
It is an exercise in linear algebra to show that the isomorphism of Proposition \ref{prop:bundles} respects the metrics,  and hence
\[
\dim(A) \cdot \log|| \taut_y || 
\circeq 2 \cdot \log ||   \det( \pi_*\Omega_{A/M} )_y||   +  \log||  \det (\bm{H}_\dR)_y||
\]

Combining this with the  lemmas  shows that
\[
\dim(A) \cdot \log|| \taut_y||   \circeq  - 2  \cdot  h^\Falt(A_y)  -  \dim(A) \cdot \log(2\pi)  .
\]
To complete the proof of the theorem, recall that $\dim(A)=2^{n+1}$.
\end{proof}


\section{Integral models}
\label{s:integral}


As in  \S \ref{s:orthogonal shimura variety},  let  $(V,Q)$ be a quadratic space over $\Q$ of signature $(n,2)$ with $n\ge 1$, and let $L\subset V$ be  a maximal lattice.  Let $M$ be the associated orthogonal Shimura variety of \S \ref{ss:the shimura variety}.


\subsection{Integral models in the self-dual case}
\label{ss:smooth integral}


Assume that $L^\vee =L$.

Recall that the  constructions of \S \ref{ss:symplectic}  provide us with a morphism 
\[
M\to X,
\]
where $X$ is the Deligne-Mumford stack over $\Q$ parametrizing abelian schemes of dimension $2^{n+1}$ endowed with a polarization of some fixed degree (depending on the choice of $\psi_\delta$).   
By virtue of its definition as a moduli space, it has a  natural extension to a stack  $\mathcal{X}$ over $\Z$, and 
we define $\mathcal{M}$ to be the normalization\footnote{See the Stacks project, \S 28.51 on relative normalization.} of $\mathcal{X}$ in $M$.
 By definition, $\mathcal{M}$ is a normal Deligne-Mumford stack sitting in  a commutative diagram
 \[
 \xymatrix{
 {   M   } \ar[r]\ar[d]  & { X } \ar[d] \\
 { \mathcal{M}  } \ar[r]  & {  \mathcal{X}  }.
 }
 \]
 The Kuga-Satake abelian scheme of \S \ref{ss:symplectic}  extends to an abelian scheme
 \[
 \mathcal{A}\to \mathcal{M}
 \]
  in an obvious way: simply pull back the universal object over $\mathcal{X}$.  
 
 \begin{theorem}[Kisin \cite{Kis}, Kim-Madapusi Pera \cite{KMP}]\label{thm:canonical integral}
 The integral model $\mathcal{M}$ is smooth over $\Z$, and is independent of the choice of $\psi_\delta$.  
 Moreover, there is a theory of automorphic vector bundles on $\mathcal{M}$, extending the theory in the generic fiber described in \S \ref{ss:bundles}. 
 \end{theorem}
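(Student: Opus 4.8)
The plan is to reduce the assertion of smoothness to a local deformation-theoretic statement at each prime $p$, and then to identify $\mathcal{M}$ with the integral canonical model attached to $(G,\mathcal{D})$, from which independence of $\psi_\delta$ is formal. First I would recall that, by construction, $\mathcal{M}$ is the normalization of the Siegel moduli stack $\mathcal{X}$ in $M$, and that over the generic fibre the image of $M$ in $X$ is characterized by the existence of a distinguished collection of absolute Hodge cycles on the Kuga-Satake abelian scheme --- equivalently, by the special endomorphisms of $A$ whose span recovers the inclusion $\bm{V}_B \subset \underline{\End}(\bm{H}_B)$. The crux is therefore the following: at each geometric point $\bar{s}$ of $\mathcal{X}$ in characteristic $p$ lying under $\mathcal{M}$, the locus in the formal deformation space of the $p$-divisible group of $\mathcal{A}_{\bar{s}}$ along which the de Rham realizations of these tensors remain crystalline Tate tensors is a formally smooth formal subscheme, of dimension $n$.

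The tool for this is Grothendieck--Messing and Breuil--Kisin theory. One describes the deformations of $\mathcal{A}_{\bar{s}}$ through the Dieudonn\'e crystal of its $p$-divisible group, so that the tangent space of the Siegel deformation space is $\underline{\Hom}(F^0\bm{H}_\dR, \bm{H}_\dR/F^0\bm{H}_\dR)$; the Kodaira--Spencer isomorphism for $M$ identifies the tangent space of $M$ inside it with the rank-$n$ submodule $\underline{\Hom}(\taut, F^0\bm{V}_\dR/\taut)$, and one must show that this submodule is the tangent space of a formally smooth formal subscheme, namely the tensor-fixing locus described above. Because $L$ is assumed self-dual the local group $\GSpin(L\otimes\Z_p)$ is reductive and the Shimura cocharacter is minuscule, and in this situation the tensor-fixing deformation functor is pro-represented by a formally smooth complete local ring: this is Kisin's theorem \cite{Kis} for $p$ odd, proved through the theory of Breuil--Kisin modules equipped with a compatible $\GSpin$-structure, and extended by Kim--Madapusi Pera \cite{KMP} to $p=2$. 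Globalizing gives smoothness of $\mathcal{M}$ over $\Z$, and with it the fact that $\mathcal{M}$ satisfies the N\'eron-type extension property characterizing the integral canonical model of $(G,\mathcal{D})$ at the hyperspecial level $K$ determined by $L$; since that data makes no reference to the auxiliary symplectic embedding, $\mathcal{M}$ is independent of $\psi_\delta$, which affects only the polarization carried by $\mathcal{A}$ and not $\mathcal{A}$ itself, so the two normalizations agree.

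For the automorphic vector bundles I would then set $\bm{H}_\dR = H^1_\dR(\mathcal{A}/\mathcal{M})$, equipped with its Hodge filtration and Gauss--Manin connection, and define $\bm{V}_\dR \subset \underline{\End}(\bm{H}_\dR)$ as the subsheaf cut out fibrewise by the special endomorphisms, equivalently by the de Rham realizations of the distinguished Hodge tensors. The explicit local picture furnished by the deformation theory of the previous step shows that $\bm{V}_\dR$ is a local direct summand of rank $\dim(V)$, self-dual for the induced symmetric bilinear form, and carrying the expected three-step filtration; one sets $\taut = F^1\bm{V}_\dR$, recovering the line bundle of weight one modular forms, and the lattices $V_\mu(\mathcal{A}_{\bar{s}})$ of special endomorphisms of the fibres give the extensions $\mathcal{Z}(m,\mu)$ of the special divisors. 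General automorphic bundles are then obtained from $\bm{H}_\dR$ and $\bm{V}_\dR$ by the usual tensor, symmetric-power, and kernel constructions.

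The step I expect to be the genuine obstacle --- and which in this expository account must be taken as a black box --- is the formal smoothness of the tensor-fixing deformation functor. This is the technical heart of \cite{Kis} and \cite{KMP}: for $p$ odd it rests on Kisin's comparison between crystalline representations and Breuil--Kisin modules, carried out compatibly with a reductive group of automorphisms, together with the reductivity of $\GSpin(L\otimes\Z_p)$ at a self-dual lattice; the prime $p=2$ forced Kim--Madapusi Pera to rebuild much of this foundation, since several of the standard arguments, and even the Clifford grading itself (which involves $\tfrac{1}{2}$), degenerate there.
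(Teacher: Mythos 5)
The paper offers no proof of this theorem: it is stated as a black box and attributed entirely to Kisin \cite{Kis} and Kim--Madapusi Pera \cite{KMP}. What you have written is a sketch of the argument in those external references, not of anything carried out in this article, and so there is no ``paper's own proof'' to compare against. That said, your sketch captures the correct architecture: normalize the Siegel moduli stack in $M$, reduce smoothness to a deformation-theoretic statement at each characteristic-$p$ point, use Grothendieck--Messing together with Breuil--Kisin modules with $\GSpin$-structure to show that the tensor-fixing locus is formally smooth of the right dimension (using that $\GSpin(L\otimes\Z_p)$ is reductive and the Hodge cocharacter is minuscule), deduce the extension property that characterizes the integral canonical model and hence independence of $\psi_\delta$, and finally propagate the $G$-torsor given by the crystalline Tate tensors to obtain integral automorphic vector bundles. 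You also correctly flag formal smoothness of the tensor-fixing functor as the genuine technical core, and the $p=2$ difficulties (the Clifford grading involving $\tfrac{1}{2}$) that forced \cite{KMP}.

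Two small inaccuracies are worth flagging. First, with the conventions of this paper $\bm{H}_\dR$ is the $\co_{\mathcal{M}}$-linear dual of $H^1_\dR(\mathcal{A}/\mathcal{M})$ (first de Rham \emph{homology}), not $H^1_\dR$ itself; this is the convention under which $F^0\bm{H}_\dR$ is a sub and $\Lie(\mathcal{A})$ is the quotient. Correspondingly, since the Siegel space parametrizes \emph{polarized} abelian schemes, its tangent space is not all of $\underline{\Hom}(F^0\bm{H}_\dR, \bm{H}_\dR/F^0\bm{H}_\dR)$ but the symmetric part determined by $\psi_\delta$. Second, $\bm{V}_\dR$ is not ``cut out fibrewise by special endomorphisms.'' As Remark 4.5.7 of the paper observes, the rank of $V(\mathcal{A}_s)$ jumps erratically with $s$ and is generically $0$; the special endomorphisms are those global endomorphisms whose realizations land in $\bm{V}_{\dR}$ (or $\bm{V}_\ell$, $\bm{V}_\crys$), not a spanning set. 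What actually produces $\bm{V}_\dR$ over $\mathcal{M}$ is the integral $G$-torsor coming from the crystalline Tate tensors that cut out $G\subset\GSp(H)$ together with the tensor construction realizing $V\subset\End(H)$ as a $G$-subrepresentation; your alternative phrasing in terms of ``de Rham realizations of the distinguished Hodge tensors'' is the correct one, and the ``special endomorphisms'' clause should simply be dropped.
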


 In particular, the final claim of the theorem provides us with filtered vector bundles $\bm{V}_\dR$ and $\bm{H}_\dR$ on $\mathcal{M}$  extending those  already constructed in the generic fiber, along with an inclusion
\[
\bm{V}_\dR \subset \underline{\End}( \bm{H}_\dR ).
\]
 A distinguished role will be played by the \emph{line bundle of weight one modular forms} 
 \[
\taut = F^1 \bm{V}_\dR .
 \]

\begin{remark}
 Theorem \ref{thm:canonical integral} remains true even if  the assumption $L^\vee = L$ is relaxed slightly.
 The important thing is that the compact open subgroup $K=\prod K_p$ be hyperspecial  at every prime $p$.  
 \begin{itemize}
 \item
 When $p>2$, $K_p$ is hyperspecial if and only if $L^\vee_p=L_p$.
\item
When $n$ is even, $K_2$ is hyperspecial if and only if $L_2^\vee=L_2$.
\item
When $n$ is odd the inclusion  $L_2\subset L_2^\vee$ is always proper, and $K_2$ is hyperspecial if and only if $[L^\vee_2:L_2]=2$.
\end{itemize}
Thus  our hypothesis $L^\vee=L$ implies that $n$ is even, and could have been  weakened to 
\[
[ L^\vee : L ] = \begin{cases}
1 &\mbox{if $n$ is even} \\
2 & \mbox{if $n$ is odd}.
\end{cases}
\]
We won't ever need this stronger version of Theorem \ref{thm:canonical integral}.
\end{remark}


\subsection{Special divisors in the self-dual case}
\label{ss:self-dual divisors}


Keep $L=L^\vee$ as above.

 Suppose $S$ is any scheme equipped with a morphism $S\to \mathcal{M}$,
and let $\mathcal{A}_S \to S$ be the pullback of the Kuga-Satake abelian scheme.   
As in \S \ref{ss:divisors}, one can define  a $\Z$-submodule
\[
V(\mathcal{A}_S) \subset \End(A_S)
\]
of \emph{special endomorphisms}, endowed with a positive definite $\Z$-valued quadratic form $Q$ satisfying
$x\circ x = Q(x) \cdot \mathrm{id}.$  The definition of a special endomorphism is now more subtle, as the Betti realization $x_B$ used in \S \ref{ss:divisors} to define $V(\mathcal{A}_S)$ when $S=\Spec(\C)$  is not available for arbitrary $S$.
Instead, one can use de Rham, $\ell$-adic, and crystalline cohomology in conjunction.

Briefly,  the vector bundle $\bm{H}_{\dR,S}$ on $S$ can be identified with the $\co_S$-linear dual of the first relative de Rham cohomology of $\mathcal{A}_S$, and so there is a canonical homomorphism
\[
\End(\mathcal{A}_S) \map{x \mapsto x_\dR} \End( \bm{H}_{\dR,S} )
\]
called \emph{de Rham realization}.   As we have already noted, (\ref{rep inclusion}) induces an inclusion of vector bundles
\[
\bm{V}_{\dR,S} \subset  \underline{\End}( \bm{H}_{\dR,S} ).
\]
We say that  $x\in \End(A_S)$ is \emph{de Rham special} if its de Rham realization   lies in the submodule
\[
H^0(S, \bm{V}_{\dR,S}) \subset  \End( \bm{H}_{\dR,S} ).
\]

Now fix a prime $\ell$.  The representations (\ref{two reps}) determine lisse \'etale sheaves $\bm{V}_{\ell,S[1/\ell]}$ and $\bm{H}_{\ell,S[1/\ell]}$ of $\Q_\ell$-modules over 
\[
S[1/\ell]=S\times_{\Spec(\Z)} \Spec(\Z[1/\ell] ),
\]
 related by an inclusion 
 \[
\bm{V}_{\ell ,S[1/\ell]} \subset  \underline{\End}( \bm{H}_{\ell,S[1/\ell]} ).
\]
The sheaf $ \bm{H}_{\ell,S[1/\ell]}$ is just the $\ell$-adic Tate module of $\mathcal{A}_{S[1/\ell]}$, and so there is an 
\emph{$\ell$-adic realization} map
\[
\End(\mathcal{A}_S) \map{x \mapsto x_\ell} \End( \bm{H}_{\ell,S[1/\ell]} ). 
\]
We say that  $x\in \End(\mathcal{A}_S)$ is \emph{$\ell$-special} if its $\ell$-adic realization $x_\ell$ lies in the submodule
\[
H^0(S, \bm{V}_{\ell,S[1/\ell]}) \subset  \End( \bm{H}_{\ell,S[1/\ell]} ).
\]

Now fix a prime $p$ and set $S_p=S\times_{\Spec(\Z)} \Spec(\F_p)$.  
 From the representations (\ref{two reps}) one  can  construct crystals of $\co_{S_p}$-modules $\bm{V}_{\crys,S_p}$ and $\bm{H}_{\crys,S_p}$, along with an inclusion 
\[
\bm{V}_{\crys,S_p} \subset \underline{\End}( \bm{H}_{\crys,S_p} ) 
\]
and a \emph{crystalline realization map}
\[
\End(\mathcal{A}_S) \map{x \mapsto x_\crys} \End( \bm{H}_{\crys,S_p} ).
\]
We then  define $x\in \End(\mathcal{A}_S)$ to be \emph{crystalline special} if its crystalline realization lies in the submodule
\[
H^0(S, \bm{V}_{\crys,S_p}) \subset  \End( \bm{H}_{\crys,S_p} )
\]
for every prime $p$.

Finally,  one defines  $x\in \End(\mathcal{A}_S)$ to be \emph{special} if it is de Rham special, $\ell$-special for every prime $\ell$, and crystalline special.
This definition may seem unwieldy, but it is simplified by the fact that specialness is an extremely rigid property.  
If $S$ is connected, and if there exists a geometric point $s\to S$ such that the restriction of $x: \mathcal{A}_S \to \mathcal{A}_S$ to the fiber $x_s: \mathcal{A}_s \to \mathcal{A}_s$ is special, then $x$ itself is special.
See \cite[\S 4.3]{AGHMP-2}.

\begin{definition}
For any positive integer $m$,  the \emph{special divisor}
\[
\mathcal{Z}(m) \to \mathcal{M}
\]
is the $\mathcal{M}$-stack with functor of points
\[
\mathcal{Z}(m)(S) = \{ x\in V(\mathcal{A}_S) : Q(x) =m \}
\]
for any $\mathcal{M}$-scheme $S\to \mathcal{M}$.
\end{definition}


\subsection{Integral models in the maximal case}
\label{ss:general integral}


We now drop the assumption $L^\vee=L$,  and return to the general case of a maximal lattice $L\subset V$.

Choose an isometric embedding of $V$ into a  quadratic space $V^\beef$ of signature $(n^\beef, 2)$, and 
 do this in such a way that $L$ is contained in a self-dual lattice $L^\beef \subset V^\beef$.
The isometric embedding $V\to V^\beef$ induces a homomorphism of Clifford algebras, which restricts to a morphism on the associated groups of spinor similitudes.  We obtain a morphism of the associated Shimura data, and hence a (finite and unramified) morphism $M\to M^\beef$ of the associated Shimura varieties over $\Q$.

The construction  of \S \ref{ss:smooth integral} provides us with a smooth integral model $\mathcal{M}^\beef$ of $M^\beef$, and a Kuga-Satake abelian scheme 
$
\mathcal{A}^\beef \to \mathcal{M}^\beef .
$ 
If we now define  $\mathcal{M}$ as the normalization of $\mathcal{M}^\beef$ in $M$, then $\mathcal{M}$ is a normal Deligne-Mumford stack, flat over $\Z$ with generic fiber $M$, sitting in a commutative diagram
\[
 \xymatrix{
 {  M  } \ar[r]\ar[d]  & {  M^\beef } \ar[d] \\
 {  \mathcal{M} } \ar[r]  & {\mathcal{M}^\beef  }.
 }
 \]
 The Kuga-Satake abelian scheme $A\to M$ extends uniquely to an abelian scheme 
 \[
 \mathcal{A} \to \mathcal{M}.
 \]

 Having already constructed a line bundle $\taut^\beef$ of weight one modular forms on $\mathcal{M}^\beef$, we can pull it back to $\mathcal{M}$ to obtain a line bundle $\taut$ on $\mathcal{M}$.
 There is no reason to expect that $\mathcal{M}$ admits any reasonable theory of automorphic vector bundles, extending the theory in the generic fiber, but the line bundle $\taut$ will suffice for our purposes.  
By endowing $\taut$ with the Petersson metric (\ref{pet metric}) in the complex fiber, we obtain a metrized line bundle
\begin{equation}\label{metrized taut}
\widehat{\taut} \in \widehat{\Pic}( \mathcal{M}).
\end{equation}
as in \S \ref{ss:arakelov}.

For a proof of the following, see \cite[Proposition 4.4.1]{AGHMP-2}.

\begin{proposition}
The integral model $\mathcal{M}$ and its line bundle $\taut$ do not  depend on the choice of $L^\beef$ used in their construction, and the Kuga-Satake abelian scheme $A\to M$ extends uniquely to an abelian scheme $\mathcal{A} \to\mathcal{M}$.
\end{proposition}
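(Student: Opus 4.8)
To prove the proposition I would split it into three assertions and treat them in turn: that the normalization $\mathcal{M}$ is independent of the chosen self-dual lattice $L^\beef \supseteq L$, that the line bundle $\taut$ is independent of it, and that the Kuga--Satake abelian scheme extends, uniquely, to $\mathcal{M}$. The first two I would deduce from functoriality of the \emph{smooth} integral canonical models of Theorem~\ref{thm:canonical integral} (Kisin, Kim--Madapusi Pera) together with transitivity of relative normalization; the third from the standard fact that a homomorphism of abelian schemes over a normal base extends uniquely from any dense open.

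\textbf{Independence of $\mathcal{M}$.} Given two isometric embeddings $V \hookrightarrow V^\beef_1$ and $V \hookrightarrow V^\beef_2$ with self-dual lattices $L^\beef_i \supseteq L$, I would first build a common ambient datum. The orthogonal complement $W_i$ of $V$ in $V^\beef_i$ has signature $(n^\beef_i - n, 0)$, hence is positive definite; the space $V \perp W_1 \perp W_2$ contains both $V^\beef_1 = V \perp W_1$ and $V^\beef_2 = V \perp W_2$ over $V$, and by the same enlargement procedure used to produce $V^\beef$ in the first place (adding only positive directions) one embeds it into a quadratic space $V^\beef_3$ of signature $(n^\beef_3, 2)$ carrying a self-dual lattice $L^\beef_3$ containing $L^\beef_1$ and $L^\beef_2$. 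Since all the levels remain hyperspecial, functoriality of the smooth integral models gives morphisms $\mathcal{M}^\beef_i \to \mathcal{M}^\beef_3$ extending $M^\beef_i \to M^\beef_3$; these are finite, being the integral extensions of the finite unramified maps on generic fibers, so $\mathcal{M}^\beef_i$ — normal, finite over $\mathcal{M}^\beef_3$, with generic fiber $M^\beef_i$ — is the normalization of $\mathcal{M}^\beef_3$ in $M^\beef_i$. Transitivity of relative normalization (Stacks project, \S 28.51) then identifies $\mathcal{M}_i$, the normalization of $\mathcal{M}^\beef_i$ in $M$, with the normalization of $\mathcal{M}^\beef_3$ in $M$ — a description visibly independent of $i$; independence of the auxiliary $V^\beef_3$ follows by comparing two such choices inside a further common enlargement. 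All of these isomorphisms are canonical, hence compatible with the remaining structures.

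\textbf{Independence of $\taut$, and uniqueness of $\mathcal{A}$.} Over the generic fiber, the pullback of $\taut^\beef = F^1 \bm{V}^\beef_\dR$ along $M \hookrightarrow M^\beef$ is canonically $F^1 \bm{V}_\dR = \taut$, because at each point the isotropic line $\C z \subset V^\beef_\C$ cutting out $F^1$ already lies in the subspace $V_\C$; I would promote this identification to the integral models by invoking the functoriality of automorphic vector bundles furnished by Theorem~\ref{thm:canonical integral}, so that the various $\taut$ agree under the canonical isomorphisms just established, and the Petersson metric (defined in the generic fiber) makes $\widehat{\taut} \in \widehat{\Pic}(\mathcal{M})$ well defined. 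Uniqueness of an extension of $A \to M$ to an abelian scheme over $\mathcal{M}$ is then formal: two extensions would be linked by mutually inverse homomorphisms extending the identity of $A$, and such homomorphisms over the normal base $\mathcal{M}$ are unique.

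\textbf{Existence of $\mathcal{A}$.} For existence I would pull back the Kuga--Satake abelian scheme $\mathcal{A}^\beef$ along the canonical finite morphism $\mathcal{M} \to \mathcal{M}^\beef$ attached to the normalization; its generic fiber is isomorphic to $A \otimes_\Q C(W)$ via the isomorphism $C(V^\beef) \iso C(V) \otimes_\Q C(W)$ of left $C(V)$-modules (with $W$ the orthogonal complement of $V$ in $V^\beef$), so $A$ is an isogeny factor, and since \emph{every} endomorphism of $A^\beef|_M$ extends over the normal base $\mathcal{M}$, so does the idempotent projecting onto that factor, producing an abelian scheme over $\mathcal{M}$ in the isogeny class of $A$. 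The hard part — which I expect to be the main obstacle — is to check that this abelian scheme is $A$ itself and not merely something isogenous to it; here one argues, for instance, that $A$ has good reduction at every codimension-one point of $\mathcal{M}$ because a power of it does, and then analyzes $\mathcal{M}$ directly as a normalization. For the details of this last step I would defer to \cite[Proposition~4.4.1]{AGHMP-2}; everything else is a combination of functoriality of the smooth integral models and soft properties of normalization and of extension of homomorphisms of abelian schemes.
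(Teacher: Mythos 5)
The paper itself gives no proof of this proposition; it simply cites \cite[Proposition 4.4.1]{AGHMP-2}. Your sketch is therefore necessarily more detailed than the source, and it correctly identifies the structural ingredients one would expect: functoriality of the Kisin/Kim--Madapusi Pera integral canonical models, transitivity of relative normalization, and extension of abelian schemes and their endomorphisms over a normal base. Your treatment of uniqueness, and your candid acknowledgement that the real content lies in showing the extension is $A$ itself rather than something merely isogenous to it, is also appropriate.

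There is, however, a genuine gap in the common-enlargement step. You need a self-dual lattice $L^\beef_3$ in some $V^\beef_3 \supseteq V \perp W_1 \perp W_2$ with $L^\beef_3 \supseteq L^\beef_1$ and $L^\beef_3 \supseteq L^\beef_2$, because that inclusion of lattices is what produces the compatible inclusion of levels and hence the finite morphisms $\mathcal{M}^\beef_i \to \mathcal{M}^\beef_3$. For any such $L^\beef_3$ to exist, the subgroup $L^\beef_1 + L^\beef_2 \subset V \perp W_1 \perp W_2$ must carry a $\Z$-valued bilinear form. But for $x_i \in L^\beef_i$, since $W_1 \perp W_2$, one has $[x_1,x_2] = [\mathrm{pr}_V(x_1), \mathrm{pr}_V(x_2)]$, and self-duality of $L^\beef_i$ together with $L \subseteq L^\beef_i$ only forces $\mathrm{pr}_V(L^\beef_i) \subseteq L^\vee$. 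When $L$ is not self-dual (the relevant case --- otherwise \S~\ref{ss:smooth integral} applies directly), $\mathrm{pr}_V(L^\beef_i)$ genuinely contains elements of $L^\vee \smallsetminus L$, and two such elements can pair non-integrally. So $L^\beef_1 + L^\beef_2$ need not be integral, and the desired $L^\beef_3$ need not exist in this naive form. Some additional argument is needed here --- e.g.\ a more careful choice of embeddings, a zigzag through auxiliary $L^\beef$'s, or a characterization of $\mathcal{M}$ intrinsic to $(M, \mathcal{A})$ that bypasses the common ambient model --- and this is precisely the sort of care one would expect \cite[Proposition 4.4.1]{AGHMP-2} to supply.

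One smaller imprecision: the isomorphism $C(V^\beef) \iso C(V) \otimes_\Q C(W)$ holds only as a left $C(V)$-module (or as algebras in the $\Z/2$-graded sense), not as ungraded $\Q$-algebras; this does not affect your argument, which only uses the module structure, but it is worth stating carefully.
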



\subsection{Special divisors in the maximal case}
\label{ss:integral divisors}


Fix a   rational number $m>0$, a coset $\mu \in L^\vee/L$, and an   $\mathcal{M}$-scheme $S\to \mathcal{M}$.

Consider the Kuga-Satake abelian scheme
\[
\mathcal{A}^\beef \to \mathcal{M}^\beef
\] 
associated with the self-dual lattice $L^\beef$ used in the construction of $\mathcal{M}$.   
Viewing $S$ as an $\mathcal{M}^\beef$-scheme using $\mathcal{M} \to \mathcal{M}^\beef$,  we have already   defined a quadratic space of special endomorphisms $V(\mathcal{A}_S^\beef)$ in \S \ref{ss:self-dual divisors}
According to \cite[Proposition 2.5.1]{AGHMP-1} there is a canonical isometric embedding 
\[
\Lambda \to V(\mathcal{A}_S^\beef),
\]
where  $\Lambda = \{ \lambda \in L^\beef : \lambda \perp L \}$.  This allows us to define 
\begin{equation}\label{perp special}
V(\mathcal{A}_S) = \{ x\in V(\mathcal{A}_S^\beef) : x \perp \Lambda\}.
\end{equation}
Of course $V(\mathcal{A}_S)$ inherits from $V(\mathcal{A}_S^\beef)$ a positive definite quadratic form $Q$.

In fact, one can realize 
\[
V(\mathcal{A}_S) \subset \End(\mathcal{A}_S)
\]
in such way that $x\circ x = Q(x) \cdot \mathrm{id}$.
One first shows  that $\mathcal{A}$ comes equipped with a natural right action of the integral Clifford algebra $C(L)$, and similarly $\mathcal{A}^\beef$ comes with a right action of $C(L^\beef)$.  The isometric embedding $L\to L^\beef$ induces a ring homomorphism $C(L) \to C(L^\beef)$, and there is  a $C(L^\beef)$-linear isomorphism 
\begin{equation}\label{KS compare}
\mathcal{A}_S \otimes_{ C(L) } C(L^\beef) \iso \mathcal{A}^\beef_S ,
\end{equation}
 where the left hand side is Serre's tensor construction.
One can then identify (\ref{perp special}) with the $\Z$-module of all $C(L)$-linear endomorphisms of $\mathcal{A}_S$
 such that the induced endomorphism $x\otimes \mathrm{id}$ of (\ref{KS compare}) lies in $V(\mathcal{A}_S^\beef)$.
For all of this see \cite[\S 2.5]{AGHMP-1}.

Using the self-duality of $L^\beef$, one can easily check that the projections to the two factors of 
$
V \oplus \Lambda_\Q \iso   V^\beef
$
induce isomorphisms
\[
\xymatrix{
&{ L^\beef  / (L \oplus \Lambda)   } \ar[rd]\ar[dl]  \\
{ L^\vee / L }  & & { \Lambda^\vee / \Lambda },
}
\]
which allow us to view $\mu \in \Lambda^\vee /\Lambda$.  Using this and 
\[
V(\mathcal{A}_S)_\Q \oplus \Lambda_\Q \iso V(\mathcal{A}^\beef_S)_\Q,
\]
 we define
\[
V_\mu(\mathcal{A}_S) = \{ x \in V(\mathcal{A}_S)_\Q : x+\mu \in V(\mathcal{A}^\beef_S)\} .
\]
Taking $\mu=0$ recovers $V(\mathcal{A}_S)$.

\begin{definition}
For any rational number $m>0$ and any $\mu \in L^\vee/L$,  the \emph{special divisor}
\begin{equation}\label{special cycle}
\mathcal{Z}(m,\mu) \to \mathcal{M}
\end{equation}
is the $\mathcal{M}$-stack with functor of points
\[
\mathcal{Z}(m,\mu)(S) = \{ x\in V_\mu(\mathcal{A}_S) : Q(x) =m \}.
\]
\end{definition}

The stack $\mathcal{Z}(m,\mu)$ has dimension $\mathrm{dim}(\mathcal{M})-1$, and the map (\ref{special cycle}) is finite and unramified.  Its complex points can be identified with (\ref{eqn:Zmmu}) in a natural way.
Although (\ref{special cycle})  is not a closed immersion, one can construct from it  an effective Cartier divisor on $\mathcal{M}$, as explained in the discussion following Proposition \ref{prop:complex divisor}.
See also \cite[\S 2.7]{AGHMP-1} for more details.



\section{Regularized theta lifts and Borcherds products}
\label{s:harmonic}


Keep the notation of \S \ref{s:orthogonal shimura variety}.
Thus $(V,Q)$ is a quadratic space of signature $(n,2)$ with $n\ge 1$,   and $\mathcal{M}$ is the  orthogonal Shimura variety over $\Z$
determined by a maximal lattice $L\subset V$.


\subsection{Regularized theta lifts}
\label{ss:theta}


One can construct Green functions for certain linear combinations of special divisors $\mathcal{Z}(m,\mu)$ on $\mathcal{M}$ using the theory of regularized theta lifts.
The construction is due to the physicists Harvey and Moore, and was used by Borcherds in \cite{Bor98}  to simplify and extend the construction of Borcherds products first introduced in \cite{Bor95}.  The theory was then further extended by Bruinier \cite{Bruinier}.

Recall that the \emph{metaplectic double cover}
\[
\widetilde{\SL}_2(\Z)  \to \SL_2(\Z)
\]
is the group of all pairs $(g,\phi )$ in which 
\[g = \left(\begin{smallmatrix}  a & b \\ c & d \end{smallmatrix}\right) \in \SL_2(\Z),\]
 and $\phi: \mathcal{H} \to \C$ is a holomorphic function satisfying 
$\phi(\tau)^2 = c\tau+d$.  Multiplication is given by 
\[
\big( g_1,\phi_1(\tau) \big) \cdot \big(g_2,\phi_2(\tau) \big) = \big(g_1g_2, \phi_1(g_2 \tau) \phi_2(\tau) \big).
\]

The finite dimensional vector space $S_L= \C [L^\vee / L]$  of $\C$-valued functions on  $L^\vee / L$ admits an action 
\begin{equation}\label{little weil}
\weil_L: \widetilde{\SL}_2(\Z) \to \Aut(S_L)
\end{equation}
of the metaplectic double cover  $\SL_2(\Z)$, called the \emph{Weil representation}.
As in \cite[Remark 3.1.1]{AGHMP-1}, our $\weil_L$ is the complex conjugate of the representation $\rho_L$ used by Borcherds \cite{Bor98}. If $n$ is even, as it will be in our application to Colmez's conjecture, then $\weil_L$  factors through $\SL_2(\Z)$.

Suppose   $f : \mathcal{H} \to S_L$  is a holomorphic function satisfying
\[
f( g \tau ) = \phi(\tau)^{2-n} \weil_L(g,\phi)  f(\tau)
\]
for every $(g,\phi) \in \widetilde{\SL}_2(\Z)$ and $\tau\in \mathcal{H}$.  
Any such $f$ admits a Fourier expansion 
\[
f (\tau) = \sum_{ m \in \Q } c (m) q^{-m} , 
\]
with  coefficients  $c(m) \in S_L$, and each coefficient can be expanded uniquely as a linear combination 
\[
c(m) = \sum_{ \mu\in L^\vee  /  L }  c(m,\mu) \cdot \varphi_\mu,
\]
where  $\varphi_\mu\in S_L$ is the characteristic function of $\mu$.

If $f(\tau)$ is meromorphic  at $\infty$, in the sense that $c(m)$ vanishes   for all sufficiently negative $m$, 
we call  $f(\tau)$  a \emph{weakly holomorphic modular form}  of weight $1-\frac{n}{2}$ and representation $\weil_L$.
The infinite dimensional $\C$-vector space space of all weakly holomorphic modular forms of weight $1-\frac{n}{2}$ and representation $\weil_L$ is denoted $M^!_{1- \frac{n}{2} }(\weil_L)$.

It is a theorem of McGraw \cite{mcgraw} that $M^!_{1- \frac{n}{2}}(\weil_L)$ admits a $\C$-basis of forms  for which all $c(m,\mu)$ are integers, and we  fix one such  form
 \begin{equation}\label{input fourier}
f (\tau) = \sum_{ m \gg -\infty} c (m) q^{-m}  \in M^!_{1-\frac{n}{2}}(\weil_L).
\end{equation}
 If $c(m,\mu)\neq 0$ then  $m  + Q(\mu) \in \Z$, and  in particular  
 \[
m\in  \frac{1}{  [L^\vee : L] } \cdot  \Z .
 \]
It follows that  the  $\Z$-linear combination of special divisors
\[
\mathcal{Z}(f) = \sum_{m>0} \sum_{ \mu \in L^\vee / L} c(-m,\mu)  \cdot \mathcal{Z}(m,\mu) 
\]
is actually a finite sum, and so defines a Cartier divisor on $\mathcal{M}$.
We will  use the theory of regularized theta lifts to construct  a Green function $\Phi(f)$ for this divisor.

Suppose $z\in \mathcal{D}$, so that $z\in V_\C$ is an isotropic vector with $[z,\overline{z}]<0$.   
If we decompose $z = x+iy$ with $x,y\in V_\R$, then $x\perp y$ and $Q(x)=Q(y)$ is negative.
Let $P_z \subset V_\R$ be the oriented negative definite plane spanned by the ordered basis $x,y$.  It is an easy exercise to check that 
$z\mapsto P_z$ defines an isomorphism 
\[
\mathcal{D} \iso \{ \mbox{oriented negative definite planes in } V_\R \}
\]
of smooth manifolds.  Define 
\[
\mathrm{pr}_z  : V_\R \to V_\R ,\qquad   \mathrm{pr}_z^\perp : V_\R \to V_\R 
\]
to be the orthogonal projections to $P_z$ and $P_z^\perp$, respectively.

For every  $g\in G(\A_F)$ the associated lattice $L_g$ of (\ref{shifted lattice}) comes with an isomorphism
\[
 L^\vee / L \map{g\action} L_g^\vee / L_g,
\]
and hence every    $\varphi \in S_L$ determines a function $\varphi_g : L_g^\vee / L_g \to \C$.  Define the \emph{theta kernel}
\[
\theta_L    : \mathcal{H} \times \mathcal{D} \times G(\A_f)  \to S_L^\vee
\]
 by 
\[
\theta_L  ( \tau , z ,g  ,\varphi) = v  \sum_{  x \in L_g^\vee  }  \varphi_g(  x) 
 \cdot e^{2\pi i \tau \cdot Q( \mathrm{pr}_z^\perp (x)) } \cdot e^{2\pi i \overline{\tau} \cdot Q( \mathrm{pr}_z (x)) } .
\]

The theta kernel descends to a function
\[
\theta_L    : \mathcal{H} \times \mathcal{M}(\C)   \to S_L^\vee,
\]
which transforms in the variable $\tau \in \mathcal{H}$ like a modular form of weight  $\frac{n}{2} -1$ for the representation contragredient to (\ref{little weil}).  Using the tautological pairing $S_L \otimes S_L^\vee \to \C$, any weakly modular form (\ref{input fourier}) therefore defines a scalar-valued function 
\[
f \cdot \theta_L  : \SL_2(\Z) \backslash \mathcal{H} \times \mathcal{M}(\C) \to \C,
\]
and we may attempt to define a function
\[
\Phi(f , y) = \int_{\SL_2(\Z) \backslash \mathcal{H}   }  f (\tau) \cdot \theta_L(\tau,y)  \cdot \frac{du\,dv}{v^2}
\]
of the variable $y\in \mathcal{M}(\C)$ by integrating over the variable  $\tau=u+iv$ in the upper half-plane.

The above integral diverges due to the pole of $f(\tau)$ at the cusp $\infty$, but can be regularized as follows.  First define
\[
\Phi(f , y,s) = \lim_{T\to \infty} \int_{ F_T   }  f (\tau) \cdot \theta_L(\tau,y)  \cdot \frac{du\,dv}{v^{s+2}},
\]
where
\[
F_T = \{ \tau \in \mathcal{H} :   u^2+v^2 \ge 1  , \,  |u| \le \frac{1}{2}  ,  \,  v \le T \} 
\]
is the usual fundamental domain for $\SL_2(\Z) \backslash \mathcal{H} $ truncated at height $T$.
The limit exists for $\mathrm{Re}(s) \gg 0$, and admits meromorphic continuation to all $s$.

\begin{definition}
The \emph{regularized theta lift} of $f$ is the function on $\mathcal{M}(\C)$ defined by 
\[
\Phi(f , y) = \mbox{constant term of } \Phi(f , y,s) \mbox{ at } s=0.
\]
\end{definition}

The regularized theta lift $\Phi(f)$ is defined at every point of $\mathcal{M}(\C)$, but is discontinuous at points of the divisor $\mathcal{Z}(f)(\C)$.
Its values along  $\mathcal{Z}(f)(\C)$, while interesting, play no role in the present work.   They do play a fundamental role in the 
results of \cite{AGHMP-1} and \cite{BY09}.

\begin{theorem}[Borcherds \cite{Bor98}]\label{thm:green}
The regularized theta lift $\Phi(f)$ is a Green function for the divisor $\mathcal{Z}(f)$ on $\mathcal{M}$, and hence determines a class
\[
\widehat{\mathcal{Z}}(f) = \big( \mathcal{Z}(f) , \Phi(f) \big) \in \widehat{\mathrm{CH}}^1(\mathcal{M}).
\]
\end{theorem}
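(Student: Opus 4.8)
The plan is to verify the two defining properties of a Green function directly, working on the complex orbifold $\mathcal{M}(\C) = M(\C) = \coprod_g \Gamma_g\backslash\mathcal{D}$ (disjoint union over $g\in G(\Q)\backslash G(\A_f)/K$). Since $\mathcal{M}$ is normal with generic fiber $M$, and both $\mathcal{Z}(f)$ and $\Phi(f)$ live on $M(\C)$, it suffices to check that $\Phi(f)$ is smooth on $M(\C)\smallsetminus\mathcal{Z}(f)(\C)$, that it has the prescribed logarithmic singularity along $\mathcal{Z}(f)(\C)$, and that it is fixed by complex conjugation. The conjugation invariance is the soft point: the theta kernel $\theta_L$ is built from the rational quadratic space $V$ together with its $\widehat{\Z}$-structure, and $f$ was chosen with integral — hence real — Fourier coefficients, so $\Phi(f,y)$ is a real-analytic function that is manifestly compatible with the canonical real structure on $M$; I would simply record this.

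For the local behavior the essential input, which I would quote from Borcherds \cite{Bor98} (and which is reworked by Bruinier \cite{Bruinier}), is an explicit evaluation of the regularized $\tau$-integral near a point $z_0\in\mathcal{D}$. Fix a component, i.e. some $g\in G(\A_f)$, and write $z=x+iy$ with associated oriented negative plane $P_z=\R x+\R y$, so that the theta kernel splits along $V_\R=P_z\oplus P_z^\perp$. Unfolding the truncated integral over $F_T$, applying Poisson summation to the lattice sum in the variable $\mathrm{pr}_z(\lambda)$, letting $T\to\infty$, and extracting the constant term at $s=0$, one obtains a formula of the shape
\[
\Phi(f,y)=\sum_{\lambda\in L_g^\vee} c\bigl(-Q(\lambda),\lambda\bigr)\cdot\beta\bigl(Q(\mathrm{pr}_z^\perp\lambda)\bigr)+(\text{smooth in }z),
\]
where $\beta$ is an explicit function with a single logarithmic singularity at the origin, and only terms with $Q(\lambda)>0$ can contribute a singular term. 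Away from $\mathcal{Z}(f)(\C)$ this sum converges locally uniformly together with all its derivatives, which handles the smoothness requirement.

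It then remains to analyze the singularity. As $z\to z_0$, the $\lambda$-term becomes singular precisely when $Q(\mathrm{pr}_z^\perp\lambda)\to Q(\lambda)$, that is when $\mathrm{pr}_{z_0}\lambda=0$, equivalently $z_0\in\mathcal{D}_\lambda$; this forces $Q(\lambda)=m$ for some $m>0$ and $\lambda\in\mu_g+L_g$ for a corresponding coset $\mu$. For each such $\lambda$ one checks, on a holomorphic chart for $\mathcal{D}\subset\mathbb{P}(V_\C)$, that the linear form $z\mapsto[z,\lambda]$ vanishes to first order exactly along $\mathcal{D}_\lambda$ and that $|[z,\lambda]|^2$ is comparable to $Q(\mathrm{pr}_z^\perp\lambda)-Q(\lambda)$ up to a smooth positive factor, so that
\[
c\bigl(-Q(\lambda),\lambda\bigr)\cdot\beta\bigl(Q(\mathrm{pr}_z^\perp\lambda)\bigr)=-c(-m,\mu)\cdot\log\bigl|[z,\lambda]\bigr|^2+(\text{smooth}).
\]
Summing over the finitely many $\lambda$ passing through $z_0$, descending from $\mathcal{D}$ to $\Gamma_g\backslash\mathcal{D}$, gluing over components, and matching the result with the local defining equation of the Cartier divisor $\mathcal{Z}(f)=\sum_{m>0}\sum_\mu c(-m,\mu)\mathcal{Z}(m,\mu)$ — using the description of special divisors in \S\ref{ss:divisors} — one concludes that $\Phi(f)+2\log|\,\text{(local equation of }\mathcal{Z}(f))\,|$ extends smoothly across $z_0$. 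Together with conjugation invariance this shows $\Phi(f)$ is a Green function for $\mathcal{Z}(f)$, and hence $\widehat{\mathcal{Z}}(f)=(\mathcal{Z}(f),\Phi(f))$ defines a class in $\widehat{\mathrm{CH}}^1(\mathcal{M})$.

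I expect the main obstacle to be exactly this boxed analytic input: the explicit evaluation of the regularized integral, the meromorphic continuation in $s$ together with the extraction of its constant term at $s=0$, and above all the verification that the divergence as $z\to z_0$ is \emph{precisely} a logarithmic pole of multiplicity $c(-m,\mu)$ with a genuinely smooth — not merely continuous — remainder. This is the technical heart of \cite{Bor98}, and in the present expository account it is taken as a black box; the remaining steps (descent to the orbifold, gluing over components, identification of the resulting Cartier divisor with $\mathcal{Z}(f)$) are routine bookkeeping.
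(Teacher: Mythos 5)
The paper does not prove this statement: it is cited to Borcherds \cite{Bor98} as a black box, with the stronger Theorem~\ref{thm:borcherds} immediately following, and the author explicitly says the reader must accept the existence of Borcherds products as an input. So there is no in-paper proof to compare against; your proposal is a sketch of what Borcherds does in [Bor98, Thm.~6.2], and as such the outline is reasonable, correctly isolating the hard analytic input (meromorphic continuation of the regularized integral and the precise local expansion with a genuinely smooth remainder) and treating descent, gluing, and conjugation-equivariance as bookkeeping.

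One small but real inconsistency in your singularity analysis: you assert that $\beta$ has its logarithmic singularity at the origin, yet you feed it $Q(\mathrm{pr}_z^\perp\lambda)$, which tends to $Q(\lambda)>0$ (not to $0$) as $z\to z_0\in\mathcal{D}_\lambda$. In Borcherds' formula the relevant variable is the \emph{negative} part $-Q(\mathrm{pr}_z\lambda)=Q(\lambda)-Q(\mathrm{pr}_z^\perp\lambda)\ge 0$, which is exactly what vanishes on $\mathcal{D}_\lambda$, and $\beta$ (essentially an incomplete Gamma/exponential-integral term) is singular there. Your equivalence $\mathrm{pr}_{z_0}\lambda=0\iff z_0\in\mathcal{D}_\lambda$ is correct, so the downstream computation of the coefficient $-c(-m,\mu)\log|[z,\lambda]|^2$ and the match with the factor $2$ in the paper's definition of a Green function both go through once the argument of $\beta$ is fixed. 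The conjugation-invariance point is stated a bit loosely — you should at least say that complex conjugation on $\mathcal{M}(\C)$ acts, in the uniformization $G(\Q)\backslash\mathcal{D}\times G(\A_f)/K$, by an involution preserving the theta kernel because $V$, $L$, and the $S_L$-structure are all defined over $\Q$ and $\Z$, and $f$ has real coefficients — but this is the right idea.
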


We will state a stronger result in Theorem \ref{thm:borcherds} below.


\subsection{Borcherds products}


We need to produce rational sections of powers of the line bundle $\taut$ on $\mathcal{M}$ with known Petersson norms (\ref{pet metric}).
Such sections were systematically constructed by Borcherds \cite{Bor95,Bor98,Bor:GKZ}.   

Fix an  
\[
f \in M_{1-\frac{n}{2}}^!(\weil_L),
\]
and assume that all coefficients $c(m,\mu)$  in the Fourier expansion (\ref{input fourier}) are integers.
After possible replacing $f$ by a positive integer multiple, there is an associated  \emph{Borcherds product} $\bm{\psi}(f)$.  This was initially constructed by Borcherds as a meromorphic function 
on the hermitian symmetric domain $\mathcal{D}$.  It is easy to construct from this function a meromorphic section of $\taut^{\otimes c(0,0)}$ on the complex fiber $\mathcal{M}(\C)$, and a calculation of Borcherds shows that the divisor of the meromorphic section is none other than the analytic divisor $\mathcal{Z}(f)(\C)$ on $\mathcal{M}(\C)$.

If the quadratic space $V$ contains an isotropic vector, the Shimura variety $\mathcal{M}(\C)$ is noncompact, and sections of $\taut$ and its powers have $q$-expansions.  Using the explicit $q$-expansion  computed by Borcherds, one can show that $\bm{\psi}(f)$ is algebraic and  defined over $\Q$, and that its divisor, when viewed as a rational section of $\taut^{\otimes c(0,0)}$ on the integral model, is $\mathcal{Z}(f)$.  
In fact, these same results are true even when $V$ contains no isotropic vector.  Of course the proofs are now more complicated, as  $\mathcal{M}(\C)$ is compact and no theory of $q$-expansions is available.

\begin{theorem}[Borcherds \cite{Bor98}, H.-Madapusi Pera \cite{HMP}]\label{thm:borcherds}
After possibly replacing $f$ by a positive integer multiple, 
there is a rational section $\bm{\psi}(f)$ of the line bundle $\taut^{  \otimes c(0,0) }$ on $\mathcal{M}$ such that 
\[
\mathrm{div}( \bm{\psi} (f) ) = \mathcal{Z}(f),
\] 
and such that
\[
\Phi(f)    =  -2  \log||  \bm{\psi}  (f)||  + c(0,0) \cdot \log(4\pi e^\gamma)  
\]
on $\mathcal{M}(\C) \smallsetminus \mathcal{Z}(f)(\C)$.
\end{theorem}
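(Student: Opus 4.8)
The plan is to build $\bm{\psi}(f)$ in three stages: first as an explicit meromorphic function on the hermitian symmetric domain $\mathcal{D}$, following Borcherds \cite{Bor98}; then to descend it to a rational section of $\taut^{\otimes c(0,0)}$ on the canonical model $M$ over $\Q$; and finally to extend it over the integral model $\mathcal{M}$. The Green function identity is then to be extracted from the structure of the regularized integral defining $\Phi(f)$.

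For the first stage, after enlarging $L$ as in \S \ref{ss:general integral} (if $V$ is anisotropic) one realizes each connected component of $\mathcal{D}$ as a tube domain attached to a primitive isotropic vector, with a chosen Weyl chamber. On this model Borcherds writes down an infinite product in which the exponent of the factor indexed by a positive root $\lambda$ is the Fourier coefficient $c(-Q(\lambda),\lambda)$ of $f$; replacing $f$ by a positive integer multiple makes the Weyl vector of the product integral, so that the product is an honest meromorphic section $\bm{\psi}(f)$ of $\taut^{\otimes c(0,0)}$ on $M(\C)$ rather than merely a root of one. One then checks absolute convergence off the special divisors and the transformation law under each $\Gamma_g = G(\Q)\cap gKg^{-1}$ so that the local products glue; the central computation of \cite{Bor98} matches the zeros and poles of $\bm{\psi}(f)$ against the analytic divisors $\mathcal{D}_x$ and yields $\mathrm{div}(\bm{\psi}(f)) = \mathcal{Z}(f)(\C)$.

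For the norm identity I would work directly with $\Phi(f,y,s)$ from \S \ref{ss:theta}: integrate over the truncated fundamental domain $F_T$, apply Poisson summation to the theta kernel $\theta_L$ in the hyperbolic direction, let $T\to\infty$, and take the constant term in $s$ at $s=0$. The nonzero Fourier modes, together with the compact part of the integral, reassemble into the logarithm of the Borcherds product; a logarithmic term coming from the zero mode is exactly what upgrades the naive absolute value $|\bm{\psi}(f)|$ to the Petersson norm (\ref{pet metric}), leaving $-2\log\|\bm{\psi}(f)\|$; and the remaining regularized zero-mode integral contributes $c(0,0)$ times an explicit constant which, after bookkeeping with the $\Gamma$-factors, the factor $4\pi$ in the Gaussian normalization of $\theta_L$, and the identity $\Gamma'(1)=-\gamma$ for Euler's constant $\gamma$, equals $\log(4\pi e^\gamma)$. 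This is Borcherds' regularized theta computation, reconciled with the normalizations (\ref{pet metric}) and (\ref{little weil}) in force here, and it is the step where a stray $2$, $2\pi$, or $e^\gamma$ is easiest to mislay; getting this constant \emph{exactly} right is, in my view, the main analytic obstacle.

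Finally, for rationality and the extension to $\mathcal{M}$: when $V$ contains an isotropic vector, the Borcherds product has a $q$-expansion of the shape $q^{\rho}\prod(1-q^{\lambda})^{c(-Q(\lambda),\lambda)}$ whose coefficients lie in $\Q$ (in $\Z$ after the scaling above), so the $q$-expansion principle forces $\bm{\psi}(f)$ to be algebraic and defined over $\Q$; since $\mathcal{M}$ is normal and flat over $\Z$ the section extends uniquely as a rational section of $\taut^{\otimes c(0,0)}$, and its Weil divisor, computed using the moduli description of the special divisors via special endomorphisms (\S \ref{ss:integral divisors}) and the unit behavior of the $q$-expansion along the boundary, is $\mathcal{Z}(f)$. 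When $V$ is anisotropic, $\mathcal{M}(\C)$ is compact and no $q$-expansion is available; this case — the most delicate part of the argument — is the content of \cite{HMP}, proved by embedding $V$ isometrically into an isotropic quadratic space $V^\beef$, pulling back the Borcherds product there (already known to be rational and integral), and running a Galois-descent argument at CM points. Everything else — the product construction and the divisor computation on $\mathcal{D}$ — is Borcherds' original work, technically substantial but conceptually routine in the present setting.
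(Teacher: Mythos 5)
Your three-stage plan --- Borcherds' infinite product on a tube domain, rationality over $\Q$ via the $q$-expansion principle, extension to the integral model $\mathcal{M}$, and the Green-function identity from the regularized theta lift --- is exactly the route taken by the references the paper cites for this theorem (\cite{Bor98}, \cite{hor:thesis,hor:book}, and for the anisotropic case \cite{HMP}), and it matches the paper's own informal description in the text preceding the statement.

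One factual slip in your first paragraph: enlarging $L$ to a self-dual lattice $L^\beef \subset V^\beef$ as in \S\ref{ss:general integral} does \emph{not} give $\mathcal{D}$ a tube domain realization. It is the larger hermitian domain $\mathcal{D}^\beef$ attached to $V^\beef$ that acquires a tube model; $\mathcal{D}$ itself, if $V$ is anisotropic, remains a compact domain with no cusps, and Borcherds' direct product expansion is simply unavailable on it. Your final paragraph correctly handles this case by pullback of the $V^\beef$-product along $M \to M^\beef$ (the content of \cite{HMP}), but paragraph one should not suggest that the enlargement makes the infinite product available on $\mathcal{D}$ directly --- the first two paragraphs apply as written only when $V$ is isotropic. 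A smaller point: the reason to pass to a positive integer multiple of $f$ is principally to kill the finite-order character by which the Borcherds product transforms under $\Gamma_g$ (so that $\Psi(f)^{\otimes 2}$ is a single-valued section), rather than specifically to make the Weyl vector integral; the latter is related but secondary. With those adjustments your sketch is a faithful summary of the cited approach, and the genuinely hard content --- the exact constant $\log(4\pi e^\gamma)$ and the anisotropic rationality/divisor computation --- is appropriately flagged rather than claimed.
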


\begin{remark}
Our $\bm{\psi}(f)$ is equal to $(2\pi i )^{ c(0,0) }   \Psi(f)^{\otimes 2}$, where $\Psi(f)$ is the meromorphic section more commonly  referred to as the Borcherds product.
\end{remark}

\begin{remark}\label{rem:hormann}
When $V$ contains an isotropic vector and $L^\vee=L$,  Theorem \ref{thm:borcherds} is contained in the work of H\"ormann \cite{hor:thesis} and \cite{hor:book}.
\end{remark}

\begin{corollary}\label{cor:taut to special}
Inside the codimension one arithmetic Chow group 
\[
\widehat{\Pic}( \mathcal{M}) \iso \widehat{\mathrm{CH}}^1(\mathcal{M})
\]
 we have the equality
\[
  c(0,0)    \cdot \widehat{\taut}  =   \widehat{\mathcal{Z}}(f)  -   c(0,0) \cdot \big( 0 , \log ( 4\pi e^\gamma ) \big) ,
\]
where $(0 , \log ( 4\pi e^\gamma ))$ means the trivial divisor on $\mathcal{M}$ endowed with the constant Green function $\log(4\pi e^\gamma)$ on the complex fiber.
\end{corollary}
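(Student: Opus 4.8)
The plan is to obtain the identity by transporting the Borcherds product of Theorem~\ref{thm:borcherds} through the isomorphism (\ref{metric to chow}). Recall that this isomorphism identifies the class $c(0,0)\cdot\widehat{\taut}$, i.e.\ the metrized line bundle $\widehat{\taut}^{\otimes c(0,0)}$, with the arithmetic divisor $\widehat{\mathrm{div}}(s) = (\mathrm{div}(s), -2\log\|s\|)$ attached to \emph{any} nonzero rational section $s$ of $\taut^{\otimes c(0,0)}$, the resulting class in $\widehat{\mathrm{CH}}^1(\mathcal{M})$ being independent of $s$ because two sections differ by a rational function and hence their arithmetic divisors differ by a principal one. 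First I would take $s = \bm{\psi}(f)$.

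Then I would substitute the two outputs of Theorem~\ref{thm:borcherds}. The equality of Cartier divisors $\mathrm{div}(\bm{\psi}(f)) = \mathcal{Z}(f)$ handles the finite part, and rearranging the Petersson norm formula gives $-2\log\|\bm{\psi}(f)\| = \Phi(f) - c(0,0)\cdot\log(4\pi e^\gamma)$ on $\mathcal{M}(\C)\smallsetminus\mathcal{Z}(f)(\C)$, which handles the archimedean part. Hence $c(0,0)\cdot\widehat{\taut}$ is the class of the arithmetic divisor $\big(\mathcal{Z}(f),\,\Phi(f) - c(0,0)\log(4\pi e^\gamma)\big)$. Peeling off the constant term, and invoking Theorem~\ref{thm:green} to recognize $(\mathcal{Z}(f),\Phi(f)) = \widehat{\mathcal{Z}}(f)$ as a genuine class in $\widehat{\mathrm{CH}}^1(\mathcal{M})$, yields exactly $c(0,0)\cdot\widehat{\taut} = \widehat{\mathcal{Z}}(f) - c(0,0)\cdot(0,\log(4\pi e^\gamma))$.

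The only point requiring a little care is that Theorem~\ref{thm:borcherds} furnishes $\bm{\psi}(f)$ only after replacing $f$ by some positive integer multiple $Nf$. I would run the argument above with $Nf$ in place of $f$; since the special divisor combination $\mathcal{Z}(f)$, the regularized theta lift $\Phi(f)$ (by its linearity in $f$), and the integer $c(0,0)$ are all linear in $f$, this produces $N$ times the asserted identity, which one then divides by $N$ after tensoring $\widehat{\mathrm{CH}}^1(\mathcal{M})$ with $\Q$ (or, for the integral statement, after using additivity in $f$ to reduce to a spanning family of inputs with integral coefficients). I do not anticipate a serious obstacle: essentially all the content is packaged into Theorems~\ref{thm:green} and~\ref{thm:borcherds}, and the remaining task is just the bookkeeping of matching the constant $\log(4\pi e^\gamma)$ against the normalization built into the isomorphism (\ref{metric to chow}).
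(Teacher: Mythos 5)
Your argument is correct and is exactly the intended proof: the paper states the corollary without explicit proof as an immediate consequence of Theorems~\ref{thm:green} and~\ref{thm:borcherds}, and your unwinding via the isomorphism (\ref{metric to chow}) applied to the rational section $\bm{\psi}(f)$ is the standard route. Your attention to the ``positive integer multiple'' caveat in Theorem~\ref{thm:borcherds} is well placed --- strictly speaking the identity is cleanest in $\widehat{\mathrm{CH}}^1(\mathcal{M}) \otimes_\Z \Q$, which the paper glosses over but which causes no trouble, since the corollary is only ever applied through the $\R$-valued linear functional $[- : \mathcal{Y}_\Z]$.
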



\section{The big CM cycle}
\label{s:big cm}


We are now going to switch the point of view slightly from previous sections.  
Rather than starting with a quadratic space  as our initial data,  we will start with  a CM field $E$ of degree $2d>2$, and build from it a quadratic space $(V,Q)$ of signature $(2d-2,2)$ in such a way that 
the resulting  $G=\GSpin(V)$ comes with a distinguished maximal torus $T\subset G$.  

Geometrically, this maximal torus  corresponds to a $0$-cycle $Y  \to M$ of points at which the Kuga-Satake abelian scheme is isogenous to a power of an abelian variety with complex multiplication by the total reflex algebra $E^\sharp$ defined in  \S \ref{ss:reflex}.


\subsection{The initial data}


Let $E$ be a CM field of degree $2d>2$,  and let $F$ be its maximal totally real subfield.  
Denote by 
\[
\iota_0,\ldots, \iota_{d-1}  :  F \to \R
\] 
the real embeddings of $F$, and choose a  $\xi \in F^\times$ such that $\iota_0(\xi)$ is negative, while $\iota_1(\xi),\ldots, \iota_{d-1}(\xi)$ are  positive.
The  rank two quadratic space
\[
( \mathscr{V} , \mathscr{Q} )  = ( E , \xi \cdot \mathrm{Norm}_{E/F} )
\]
over $F$ has signature 
\[
\mathrm{sig}( \mathscr{V} , \mathscr{Q} ) = \big( (0,2), (2,0), \ldots, (2,0)  \big).
\]
That is, the quadratic form is negative definite at $\iota_0$, and positive definite at the remaining real places.

 Define a quadratic space   $V$ over $\Q$ by 
\begin{equation}\label{trace quadratic}
(V,Q)=( \mathscr{V} , \mathrm{Trace}_{F/\Q} \circ \mathscr{Q})
\end{equation}
of signature $(2d-2,2)$.  As in \S \ref{ss:gspin}, let $C=C^+\oplus C^-$ be its Clifford algebra.

One can  similarly form the Clifford algebra of the $F$-quadratic space $\mathscr{V}$.  
It is a $\Z/2\Z$-graded quaternion algebra over $F$.
The even part  is isomorphic to $E$, while the odd part is simply $\mathscr{V}$ itself.
Obviously the relation (\ref{trace quadratic}) should imply some relation between the Clifford algebras of $V$ and $\mathscr{V}$. 
The  relation is slightly subtle, and involves the  total reflex algebra  $E^\sharp$ of  \S \ref{ss:reflex}.

\begin{proposition}\label{prop:reflex embedding}
There is a canonical injection of $\Q$-algebras $E^\sharp \to C^+$, which makes $C$ into a free $E^\sharp$-module of rank 
$2^d$.
\end{proposition}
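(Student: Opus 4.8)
The plan is to base change everything to $\Q^\alg$, recognize the reflex algebra combinatorially, and then descend. Write $\mathscr{V}_\tau = \mathscr{V} \otimes_{F,\tau} \Q^\alg$ for each embedding $\tau \colon F \to \Q^\alg$. First I would use the isomorphism $F \otimes_\Q \Q^\alg \cong \prod_\tau \Q^\alg$, together with the fact that $\mathrm{Trace}_{F/\Q}$ becomes $\sum_\tau$ after this base change, to produce a $G_\Q$-equivariant isometry $V \otimes_\Q \Q^\alg \cong \bigoplus_\tau \mathscr{V}_\tau$ (an orthogonal direct sum of rank-two quadratic spaces, with $G_\Q$ permuting the summands compatibly with the $\Q^\alg$-coefficients). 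Applying the standard graded tensor product identity for Clifford algebras then gives an isomorphism of $\Z/2\Z$-graded algebras $C \otimes_\Q \Q^\alg \cong \widehat{\bigotimes}_\tau C(\mathscr{V}_\tau)$, under which the even part of the $\tau$-factor is $C^+(\mathscr{V}_\tau) \cong C^+(\mathscr{V}) \otimes_{F,\tau} \Q^\alg \cong E \otimes_{F,\tau} \Q^\alg$.

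Next I would single out the subspace $R = \bigotimes_\tau C^+(\mathscr{V}_\tau)$ inside $\widehat{\bigotimes}_\tau C(\mathscr{V}_\tau)$. Since all of its tensor factors are even, the sign twists in the graded product are trivial on it: $R$ is an honest commutative $\Q^\alg$-subalgebra of dimension $2^d$, it lies in the even part $C^+ \otimes_\Q \Q^\alg$, and it is $G_\Q$-stable because $G_\Q$ merely permutes the factors $\mathscr{V}_\tau$. Because $E/F$ is a separable quadratic extension, each $C^+(\mathscr{V}_\tau) = E \otimes_{F,\tau}\Q^\alg$ is isomorphic to $\Q^\alg \times \Q^\alg$, with the two factors indexed by the two extensions of $\tau$ to an embedding $E \to \Q^\alg$; hence $R$ is the algebra of $\Q^\alg$-valued functions on $\prod_\tau \{\text{extensions of }\tau\text{ to }E\}$. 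That last set is precisely the set of CM types of $E$, since a point of it is a choice, for each real place of $F$, of one of the two embeddings of $E$ above it, i.e.\ a subset $\Phi \subset \Hom(E,\C)$ with $\Phi \sqcup \overline{\Phi} = \Hom(E,\C)$, and this bijection is $G_\Q$-equivariant. By the defining property of $E^\sharp$ (Proposition \ref{prop:Gsets}), this gives a $G_\Q$-equivariant isomorphism of $\Q^\alg$-algebras $E^\sharp \otimes_\Q \Q^\alg \cong R$; composing with $R \hookrightarrow C^+ \otimes_\Q \Q^\alg$ and applying Galois descent yields the canonical injection $E^\sharp \to C^+$.

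For the freeness, I would observe that each $C(\mathscr{V}_\tau)$ is free of rank $2$ as a left module over $C^+(\mathscr{V}_\tau)$: a basis is $\{1, e_\tau\}$ for any anisotropic $e_\tau \in \mathscr{V}_\tau$, because right multiplication by $e_\tau$ is a left-$C^+(\mathscr{V}_\tau)$-linear isomorphism $C^+(\mathscr{V}_\tau) \to C^-(\mathscr{V}_\tau)$. Since left multiplication by $R$ on the graded tensor product is componentwise (again all sign twists vanish), $C \otimes_\Q \Q^\alg \cong \widehat{\bigotimes}_\tau C(\mathscr{V}_\tau)$ is free of rank $2^d$ over $R \cong E^\sharp \otimes_\Q \Q^\alg$. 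Finally, as $E^\sharp$ is a product of fields (Proposition \ref{prop:reflex pair}) and $C$ is a finitely generated $E^\sharp$-module, comparing $\Q^\alg$-dimensions of the components over each factor field shows $C$ is already free of rank $2^d$ over $E^\sharp$.

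The main obstacle I expect is the bookkeeping in the middle step: keeping the graded tensor product decomposition, the splitting of each $C^+(\mathscr{V}_\tau)$ into its two idempotent components, and the $G_\Q$-action all mutually compatible, so that the identification of $\bigotimes_\tau C^+(\mathscr{V}_\tau)$ with the algebra of functions on the set of CM types is genuinely Galois-equivariant and descent applies. This match between ``all-even multi-indices in the graded Clifford algebra, refined by the two points of each $C^+(\mathscr{V}_\tau)$'' and ``CM types of $E$'' is the real content of the proposition; the rest is dimension counting.
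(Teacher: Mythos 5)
Your proposal is correct and follows essentially the same path as the paper's proof: base change to a splitting field of $F$, decompose $V$ into rank-two pieces, identify the Clifford algebra with a graded tensor product, recognize the product of the even parts as the reflex algebra, and descend. The only cosmetic difference is that the paper works over a finite Galois extension $\kk$ with $F_\kk \cong \kk^d$ rather than going all the way to $\Q^\alg$, and leaves as exercises the verifications you spell out (vanishing of the graded signs on the even subalgebra, the $G_\Q$-equivariant identification with the algebra of functions on CM types).
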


\begin{proof}
Choose a totally real number field $\kk$ that is Galois over $\Q$, and for any $\Q$-vector space $W$ set $W_\kk=W\otimes_\Q\kk$.
We may choose $\kk$ in such a way that 
\[
F_\kk  \iso \underbrace{\kk \oplus \cdots \oplus \kk}_{ d\mbox{ times}}.
\]
This implies that 
$
E_\kk  \iso \kk_1\oplus \cdots \oplus \kk_d
$
for CM fields $\kk_1,\ldots, \kk_d$, each having $\kk$ as its maximal totally real subfield.
An exercise in Galois theory shows that 
\[
E_\kk^\sharp  \iso \kk_1\otimes_\kk \cdots \otimes_\kk \kk_d
\]
as $\kk$-algebras.

The action of $F$ on $V=\mathscr{V}$ induces  an orthogonal decomposition of $\kk$-quadratic spaces
$
V_\kk \iso W_1 \oplus \cdots \oplus W_d ,
$
in which each $W_i$ is a $1$-dimensional vector space over $\kk_i$,
and  a corresponding isomorphism of $\kk$-algebras
\[
C_\kk  \iso D_1 \otimes_\kk \cdots \otimes_\kk D_d,
\]
in which  $D_i \iso \kk_i \oplus W_i$ is the Clifford  algebra of the $2$-dimensional quadratic space $W_i$ over $\kk$.  

In particular, we have a natural inclusion $E_\kk^\sharp \hookrightarrow C_\kk$.  
One can check that this is compatible with the $\Gal(\kk/\Q)$ action on source and target, and  descends to an injection $E^\sharp \to C$ with the desired properties.
\end{proof}


\subsection{A maximal torus}
\label{ss:groups}


The relation (\ref{trace quadratic}) endows the spinor similitude group $G=\GSpin(V)$ with extra structure, namely a distinguished maximal torus $T\subset G$, which we now describe.

First define tori over $\Q$ by 
\[
T_F = \mathrm{Res}_{F/\Q} \mathbb{G}_m,\qquad 
T_E = \mathrm{Res}_{E/\Q} \mathbb{G}_m 
\]
and
\[
T_{F^\sharp} = \mathrm{Res}_{F^\sharp/\Q} \mathbb{G}_m , 
\qquad T_{E^\sharp} = \mathrm{Res}_{E^\sharp/\Q} \mathbb{G}_m,
\]
where  $E^\sharp$ is the total reflex algebra of $E$,  and $F^\sharp$ is its maximal totally real subalgebra.  
Define a quotient torus
\[
T = T_E / \mathrm{ker}\big( \mathrm{Norm}_{F/\Q} : T_F \to \mathbb{G}_m \big),
\]
and a  subtorus $T^\sharp\subset T_{E^\sharp}$  by the cartesian diagram
\[
\xymatrix{
{ T^\sharp  }   \ar[rr] \ar[d] & & {  \mathbb{G}_m    } \ar[d]  \\
{ T_{E^\sharp} }   \ar[rr]_{\mathrm{Norm}_{ E^\sharp/F^\sharp } } &&  {    T_{F^\sharp}    }  
}
\]
where the vertical arrow on the right is induced by the inclusion $\Q  \subset F^\sharp$.

The total reflex norm of Proposition \ref{prop:reflex pair} induces a morphism
\[
\mathrm{Nm}^\sharp : T_E \to T_{E^\sharp}, 
\]
which takes values in the subgroup $T^\sharp$.  Moreover, this morphism
 restricts to $\mathrm{Norm}_{F/\Q}$  on the subgroup $T_F \subset T_E$, and so factors through a morphism
 \begin{equation}\label{better norm}
 \mathrm{Nm}^\sharp : T \to T^\sharp .
 \end{equation}

 Let $\psi_\delta$ be a symplectic form on the $\Q$-vector space $H=C$,  of the type constructed in  \S \ref{ss:symplectic}.
Using Proposition \ref{prop:reflex embedding}, we can view $H$ as a left module over   $E^\sharp  \subset C$, and hence we obtain a faithful representation
\[
T_{E^\sharp} \to \GL(H).
\]
It is not hard to see that this restricts to a morphism
\begin{equation}\label{hodge torus}
T^\sharp \to \GSp(H),
\end{equation}
and there is a cartesian diagram of algebraic groups
\[
\xymatrix{
{  T   } \ar[rr] \ar[d]_{(\ref{better norm})}  && G \ar[d] \\
{ T^\sharp}  \ar[rr]_{ (\ref{hodge torus}) }  && \GSp(H)
}
\]
over $\Q$.  Here the vertical arrow on the right is the symplectic representation of \S \ref{ss:symplectic}, and 
the top horizontal arrow is uniquely determined by the commutativity of the diagram.


\subsection{Shimura data}


Now we attach a  $0$-dimensional Shimura variety  to the maximal torus $T\subset G$  defined in  \S \ref{ss:groups}.

Recall that we have a distinguished  real embedding $\iota_0 : F \to \R$, characterized as the unique real place of $F$ at which the quadratic space $\mathscr{V}$ is negative definite.  In other words, by the condition that $\mathscr{V} \otimes_{F,\iota_0} \R$
is a real quadratic space of signature $(0,2)$.   Fix an extension to 
\[
\iota_0:E \to \C.
\]

Recall that  $V=\mathscr{V}$ has the structure of a $1$-dimension vector space over $E$. This structure 
determines a decomposition
\[
V_\C = \bigoplus_{ \iota : E\to \C} V(\iota)
\] 
into complex lines, where  
\[
V(\iota) = \{ x\in V_\C : \forall  \beta\in E,\ \beta x = \iota(\beta) x   \} .
\] 
In particular, we obtain a distinguished  line $V(\iota_0) \subset V_\C$.  It is an easy exercise in linear algebra to check that this line is isotropic, and defines a point 
\[
z_0 \in \mathcal{D} \subset \Hom(\mathbb{S} , G_\R)
\]
in the hermitian symmetric domain (\ref{domain}).  
Moreover,   the  corresponding morphism $\mathbb{S} \to G_\R$  takes values in $T_\R$.

The pair $(T,\{ z_0\})$ is a Shimura datum with  reflex field  $\iota_0(E) \subset \C$, and the inclusion $T\subset G$ defines a morphism of Shimura data
\begin{equation}\label{big data}
(T  ,\{ z_0\}) \to (G,\mathcal{D}) .
\end{equation}


\subsection{The big CM cycle}
\label{ss:big cm}


Fix a maximal lattice $L\subset V$.  
As in \S \ref{ss:the shimura variety}, this choice determines a compact open subgroup   $K\subset G(\A_f)$, and an orthogonal  Shimura variety 
$M$ over $\Q$.    Let $\mathcal{M}$ be the integral model defined in \S \ref{ss:general integral}.

Fix any compact open subgroup  $K_T\subset T(\A_f)$  small enough that
\[
K_T \subset K\cap T(\A_f) .
\] 
The canonical model $Y$ of the    $0$-dimensional Shimura variety
\begin{equation}\label{complex big cm}
Y   (\C) = T(\Q) \backslash \{ z_0\} \times T(\A_f) / K_T
\end{equation}
 is a reduced Deligne-Mumford stack $Y$, finite  over the reflex field $\iota_0(E)$.  
We can define an integral model $\mathcal{Y}$  simply by taking the normalization of  $\Spec( \iota_0(\co_E))$ in $Y$.  
 Thus $\mathcal{Y}$ is a regular Deligne-Mumford stack, finite and flat over $\iota_0(\co_E)$ with generic fiber $Y$.

The natural morphism $Y(\C)  \to M(\C)$ induced by (\ref{big data}) is algebraic, and descends to the subfield 
$\iota_0(E) \subset \C$.  One can show that it extends uniquely to a morphism 
\begin{equation}\label{almost big cm}
\mathcal{Y} \to \mathcal{M} \times_{\Spec(\Z)} \Spec( \iota_0(\co_E)  )
\end{equation}
of integral models over $\iota_0(\co_E)$.

The notation $\mathcal{Y}(\C)$ is potentially ambiguous, as it could refer either to the complex points of $\mathcal{Y}$ viewed as a stack over $\Z$, or the complex points of $\mathcal{Y}$ viewed as a stack over $\iota_0(\co_E)$.   
To disambiguate, we understand that  $\mathcal{Y}(\C)$ always means the latter.  In other words $\mathcal{Y}(\C)$ is the same as (\ref{complex big cm}).

We define $\mathcal{Y}_\Z = \mathcal{Y}$, but viewed as a stack over $\Z$ rather than $\iota_0(\co_E)$.
Thus the stacks $\mathcal{Y}_\Z$ and  $\mathcal{Y}$ are the same, but have different complex points.  
More precisely, 
 \[
 \mathcal{Y}_\Z(\C) \iso \bigsqcup_{ \sigma : \iota_0(\co_E) \to \C } \mathcal{Y}^\sigma(\C).
 \]
Composing (\ref{almost big cm}) with the projection  
\[
 \mathcal{M} \times_{\Spec(\Z)} \Spec( \iota_0(\co_E)  )   \to \mathcal{M} ,
\]
 we obtain a morphism of $\Z$-stacks 
\[
\mathcal{Y}_\Z  \to \mathcal{M} 
\]
called  the \emph{big CM cycle}.
As in \S \ref{ss:arakelov}, there is an induced  homomorphism
\begin{equation}\label{big cm functional}
 [- : \mathcal{Y}_\Z ] : \widehat{\mathrm{CH}}^1(\mathcal{M}) \to \R
\end{equation}
called \emph{arithmetic intersection against $\mathcal{Y}_\Z$}.

This linear functional depends on the  choice of compact open subgroup $K_T \subset T(\A_f)$ used to define $\mathcal{Y}_\Z$, 
but if we set 
\[
\deg_\C( \mathcal{Y}_\Z) = \sum_{y\in \mathcal{Y}_\Z(\C)} \frac{1}{ |\Aut(y)| }.
\]
then  the rescaled linear functional
\[
\widehat{\mathcal{Z}} \mapsto \frac{ [\widehat{\mathcal{Z}} : \mathcal{Y}_\Z ] }{ \deg_\C(\mathcal{Y}_\Z) }
\]
is independent of the choice.

At a point in the image of $\mathcal{Y}_\Z(\C) \to \mathcal{M}(\C)$  the fiber of the  Kuga-Satake abelian scheme is isogenous to a power of  an abelian variety with complex multiplication.
More precisely, we have the following proposition.

\begin{proposition}\label{prop:KScm}
The pullback of the  Kuga-Satake abelian scheme 
\[
\mathcal{A}\to \mathcal{M}
\]
 to any complex point  $y\in \mathcal{Y}_\Z(\C)$ is isogenous to $2^d$ copies of an abelian variety over $\C$ with complex multiplication by $E^\sharp$, and CM type a Galois conjugate of $\Phi^\sharp$.
\end{proposition}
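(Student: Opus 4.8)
The plan is to compute the rational Hodge structure on the Kuga--Satake abelian variety at a complex point of $\mathcal{Y}_\Z$ directly, exploiting that this Hodge structure factors through the torus $T^\sharp$ of \S\ref{ss:groups} and that $H=C$ is an $E^\sharp$-module by Proposition~\ref{prop:reflex embedding}. Using the decomposition $\mathcal{Y}_\Z(\C)\iso\bigsqcup_{\sigma:\iota_0(\co_E)\to\C}\mathcal{Y}^\sigma(\C)$, it suffices to treat a point $y$ in the component $\mathcal{Y}(\C)=T(\Q)\backslash\{z_0\}\times T(\A_f)/K_T$; for a point in $\mathcal{Y}^\sigma(\C)$ one applies $\sigma$ throughout, which by the behavior of CM points under Galois conjugation (Shimura reciprocity) replaces the CM type $\Phi^\sharp$ obtained below by the $G_\Q$-conjugate $\sigma\circ\Phi^\sharp$. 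So fix $y$ mapping to $[z_0,t]\in M(\C)$ with $t\in T(\A_f)$.

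First I would recall from (\ref{analytic KS}) that $H_1(A_y(\C),\Q)\iso H=C$, with Hodge decomposition determined by $h_{z_0}:\mathbb{S}\to G_\R\to\GSp(H)_\R$. By the cartesian diagram of \S\ref{ss:groups}, this composite equals $\mathbb{S}\to T^\sharp_\R\to\GSp(H)_\R$, where the second arrow is the representation (\ref{hodge torus}) coming from the \emph{left} $E^\sharp$-action on $H=C$. In particular the Hodge structure on $H$ is $E^\sharp$-linear, so $E^\sharp\hookrightarrow\End(A_y)_\Q$; and since $H=C$ is a free $E^\sharp$-module of rank $2^d$ by Proposition~\ref{prop:reflex embedding}, we obtain an isomorphism of $E^\sharp$-Hodge structures $H_1(A_y(\C),\Q)\iso(E^\sharp)^{\oplus 2^d}$, in which $E^\sharp$ carries the weight $-1$, type $\{(-1,0),(0,-1)\}$ Hodge structure defined by $h_{z_0}$ through $T^\sharp$. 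Being a direct summand of a polarizable Hodge structure it is polarizable, hence corresponds to an abelian variety $B$ with $E^\sharp\iso\End(B)_\Q$, and $A_y$ is therefore isogenous to $B^{2^d}$.

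It remains to identify the CM type $\Psi\subset\Hom(E^\sharp,\C)$ of $B$ with $\Phi^\sharp$. By construction $\Psi$ records which lines $\C_\psi$ in $E^\sharp_\C=\bigoplus_{\psi:E^\sharp\to\C}\C_\psi$ survive in $\Lie(B)$, equivalently the weights of the Hodge cocharacter of $h_{z_0}$ on $H_\C=C_\C$, whose $F^0$-part is $z_0H_\C$ with $z_0$ spanning $V(\iota_0)\subset V_\C$. To evaluate this I would pass to a splitting field $\kk$ as in the proof of Proposition~\ref{prop:reflex embedding}, where $E^\sharp_\kk\iso\kk_1\otimes_\kk\cdots\otimes_\kk\kk_d$ and $C_\kk\iso D_1\otimes_\kk\cdots\otimes_\kk D_d$ are compatibly factored, reducing the computation to the rank-two situation of \S\ref{ss:the shimura variety} in each tensor slot. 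Matching the answer against the combinatorial description of $\Phi^\sharp$ as the set of CM types of $E$ containing $\iota_0$ (under the identification $\Hom(E^\sharp,\C)\iso\{\text{CM types of }E\}$ of Proposition~\ref{prop:Gsets}) yields $\Psi=\Phi^\sharp$, and descent along $\Gal(\kk/\Q)$ completes the argument.

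The hard part will be this last identification. Everything up to the determination of $B$ is formal manipulation of the cartesian square and of the $E^\sharp$-module structure on $H=C$; but pinning the CM type down to $\Phi^\sharp$, rather than merely some $G_\Q$-conjugate of it, requires carefully threading the explicit isomorphisms of Proposition~\ref{prop:reflex embedding} through the definition of $z_0$ via the isotropic line $V(\iota_0)$ --- in effect re-deriving, at the level of Hodge cocharacters, the relation between the total reflex type and the classical reflex construction recalled in the proof of Proposition~\ref{prop:reflex pair}.
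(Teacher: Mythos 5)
Your proposal takes essentially the same route as the paper's proof: reduce by Galois conjugation to a point $y=[z_0,t]\in\mathcal{Y}(\C)$, invoke Proposition~\ref{prop:reflex embedding} to make $H=C$ a free $E^\sharp$-module of rank $2^d$ compatibly with the Hodge filtration determined by $z_0$, and read off the CM type from the $E^\sharp$-eigenspaces in $\Lie(\mathcal{A}_y)\iso H_\C/z_0H_\C$. The ``hard part'' you flag at the end --- showing those eigenspaces are concentrated exactly on $\Phi^\sharp$ --- is precisely the step the paper also leaves to the reader as ``another exercise in linear algebra,'' so your outline (with the passage to a splitting field $\kk$ and the tensor factorization of $C_\kk$ as the proposed mechanism for that computation) is a faithful expansion of the published argument rather than a departure from it.
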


\begin{proof}
First suppose $y\in \mathcal{Y}(\C) \subset \mathcal{Y}_\Z(\C)$, 
so that $y=[ z_0 , g]$ for some $g\in T(\A_f) \subset G(\A_f)$.  
The fiber of the Kuga-Satake abelian scheme at $y$ is then given by 
\[
\mathcal{A}_y(\C) \iso (g H_{\widehat{\Z}}   \cap H)  \backslash H_\C /  z_0 H_\C,
\]
where we are identifying $z_0\in  V_\C  \subset  \End(H_\C)$ using the left multiplication action of $V\subset C$ on $C=H$.

We know from Proposition \ref{prop:reflex embedding} that left multiplication by the subalgebra $E^\sharp \subset C^+$ makes  $H$ into a free $E^\sharp$-module of rank $2^d$. 
 It is an exercise in linear algebra to check that the subspace $z_0 H_\C \subset  H_\C$ is stable under the action of $E^\sharp$.  This subspace is obviously stable under the right multiplication action of $C$, and hence we obtain an action
\[
M_{2^d}(E^\sharp) \iso E^\sharp \otimes_\Q C \to \End(\mathcal{A}_y)_\Q.
\]
One can then use this action to decompose $\mathcal{A}_y$ as a product, up to isogeny, of $2^d$ factors.
Each factors admits complex multiplication by $E^\sharp$, and all factors  are $E^\sharp$-linearly isogenous to one another.
Another exercise in linear algebra shows that the eigenspace of a $\Q$-algebra map $\varphi \in \Hom(E^\sharp , \C)$ in 
\[
\Lie(\mathcal{A}_y) \iso H_\C /  z_0 H_\C
\]
has dimension $2^d$ if $\varphi\in \Phi^\sharp$, and has dimension $0$ otherwise.  This implies that each of the isogeny factors of $\mathcal{A}_y$ has complex multiplication of type $(E^\sharp , \Phi^\sharp)$.

A general point $y\in \mathcal{Y}_\Z(\C)$ is always $\Aut(\C/\Q)$-conjugate to a point of $\mathcal{Y}(\C)$, and hence decomposes, up to isogeny, as a product of abelian varieties with CM by $E^\sharp$ and CM type a Galois conjugate of $\Phi^\sharp$.
\end{proof}


\subsection{The average Faltings height}


We now relate $[\widehat{\taut} : \mathcal{Y}_\Z]$ to the average Faltings height appearing in  Theorem \ref{thm:average colmez}.   Before stating the result, we need a definition. 
 If  $p$ is any rational prime, the action of $E$ on $V=\mathscr{V}$ induces a decomposition
 \[
 V_p = \bigoplus_{\mathfrak{p} \mid p} \mathscr{V}_\mathfrak{p}
 \]
 over the primes  $\mathfrak{p} \subset \co_F$ above $p$. For any such  $\mathfrak{p}$  set
 \[
L_\mathfrak{p} = L_p \cap \mathscr{V}_\mathfrak{p},
\]
where the intersection is taken inside of $V_p$.  The  quadratic form $Q$ on $L_p$ restricts to a $\Z_p$-valued quadratic form on $L_\mathfrak{p}$.

\begin{definition}\label{def:bad primes}
We say that a rational prime $p$ is \emph{good} if it satisfies the following properties.
\begin{enumerate}
\item  
For every prime $\mathfrak{p} \subset \co_F$ above $p$ that is unramified in $E$, the $\Z_p$-lattice $L_\mathfrak{p}\subset \mathscr{V}_\mathfrak{p}$ is  self-dual with respect to the  $\Z_p$-valued quadratic form, and  is stable under the action of $\co_{E,\mathfrak{p}}$.
\item
For every prime $\mathfrak{p} \subset \co_F$ above $p$ that is ramified in $E$, the $\Z_p$-lattice $L_\mathfrak{p}\subset \mathscr{V}_\mathfrak{p}$ is  maximal with respect to the  $\Z_p$-valued quadratic form, and there exists an $\co_{E,\mathfrak{q}}$-stable lattice
$
\Lambda_\mathfrak{p} \subset \mathscr{V}_\mathfrak{p}
$
such that 
\[
\Lambda_\mathfrak{p} \subset L_\mathfrak{p} \subset  \mathfrak{d}^{-1}_{E_\mathfrak{q} / F_\mathfrak{p}} \Lambda_\mathfrak{p}.
\]
Here $\mathfrak{q} \subset \co_E$ is the unique prime above $\mathfrak{p}$, and $\mathfrak{d}^{-1}_{E_\mathfrak{q} / F_\mathfrak{p}} \subset \co_{E,\mathfrak{q}}$ is the inverse  different of $E_\mathfrak{q} / F_\mathfrak{p}$.
\end{enumerate}
Denote by $\Sigma_\mathrm{bad}$ the (finite) set of rational primes that are not good.
\end{definition}

\begin{theorem}\label{thm:taut to faltings}
The equality
\begin{equation}\label{taut computes average}
 \frac{    [  \widehat{\taut}   :    \mathcal{Y}_\Z   ]   }{  \deg_\C(\mathcal{Y}_\Z)   }    = \frac{1}{ d \cdot 2^{d-1} }\sum_\Phi h^\Falt_{(E,\Phi)}  + \log(2\pi)
\end{equation}
holds up to a $\Q$-linear combination of $\{ \log(p) :  p \in \Sigma_\mathrm{bad} \}$.
\end{theorem}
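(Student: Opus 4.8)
The plan is to establish the identity first modulo $\circeq$, drawing only on the results already assembled, and then to refine the argument so as to see that the only primes that intervene lie in $\Sigma_{\mathrm{bad}}$.

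For the $\circeq$ statement I would start by choosing a nonzero rational section $s$ of $\taut$ on $\mathcal{M}$. Since $\mathcal{Y}_\Z$ is finite flat over $\Spec(\Z)$, the divisor of $s|_{\mathcal{Y}_\Z}$ is supported at closed points of nonzero characteristic; hence $s$ is nowhere zero along $\mathcal{Y}_\Z(\C)$ and contributes only to the finite part of $\widehat{\deg}$, which is a $\Q$-linear combination of $\{\log p\}$. Consequently
\[
[\widehat{\taut}:\mathcal{Y}_\Z] \;\circeq\; -\sum_{y\in\mathcal{Y}_\Z(\C)} \frac{\log\|s(y)\|}{|\Aut(y)|}.
\]
Decomposing the generic fiber $Y_\Z$ into points $y_j\in M(\kk_j)$ valued in number fields (with the obvious modification for automorphisms) and invoking Theorem \ref{thm:modular height} at each $y_j$ — here $n=2d-2$ — rewrites the right-hand side, modulo $\circeq$, as $\deg_\C(\mathcal{Y}_\Z)$ times $2^{-(2d-2)}\overline{h}+\log(2\pi)$, where $\overline{h}$ is the $[\kk_j:\Q]$-weighted average of the Faltings heights $h^\Falt(A_{y_j})$. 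By Proposition \ref{prop:KScm}, each $A_{y_j}$ is isogenous to $A_0^{2^d}$ for an abelian variety $A_0$ with complex multiplication of type $(\co_{E^\sharp},g\Phi^\sharp)$ for some $g\in G_\Q$ (adjusting $A_0$ within its isogeny class so that the CM is by the maximal order); since $h^\Falt$ is additive in products, changes only by a $\Q$-combination of $\log p$ under isogeny, and by Theorem \ref{thm:colmez} depends only on the $G_\Q$-orbit of the CM type, one gets $h^\Falt(A_{y_j})\circeq 2^d\, h^\Falt_{(E^\sharp,\Phi^\sharp)}$ for every $j$. Substituting, then applying Corollary \ref{cor:reflex height}, and using $2^{-(2d-2)}\cdot 2^d\cdot\tfrac{1}{2d}=\tfrac{1}{d\cdot 2^{d-1}}$, gives (\ref{taut computes average}) up to $\circeq$.

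To promote $\circeq$ to equality modulo $\Q$-combinations of $\{\log p:p\in\Sigma_{\mathrm{bad}}\}$, I would keep track of the primes entering each step. The decisive input is an integral refinement of Proposition \ref{prop:KScm}: after localizing at a good prime $p$, the Kuga-Satake abelian scheme $\mathcal{A}_{\mathcal{Y}_\Z}$ should be \emph{isomorphic} — not merely isogenous — to the $2^d$-th power of an abelian scheme with complex multiplication by $\co_{E^\sharp}$ and good reduction, compatibly with Hodge filtrations. This is precisely what the self-duality and maximality conditions of Definition \ref{def:bad primes} are designed to supply: they govern how the maximal lattice $L$ and its $\co_E$-action behave $p$-integrally, hence how the Clifford construction and the embedding $E^\sharp\hookrightarrow C$ of Proposition \ref{prop:reflex embedding} descend to an integral decomposition of $\mathcal{A}$ over the big CM cycle. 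Granting that, the finite part of the Faltings height of a CM abelian scheme with good reduction at $p$ has trivial $p$-part, so passing between $h^\Falt(A_{y_j})$ and $2^d h^\Falt_{(E^\sharp,\Phi^\sharp)}$ introduces no primes outside $\Sigma_{\mathrm{bad}}$; and the chain of isomorphisms in the proof of Theorem \ref{thm:modular height} (Proposition \ref{prop:bundles}, together with the volume and Faltings metrics) must be run integrally over $\mathcal{M}^\beef$ and pulled back along $\mathcal{Y}_\Z\to\mathcal{M}\to\mathcal{M}^\beef$, so that $\taut|_{\mathcal{Y}_\Z}$ and $\det(\pi_*\Omega_{\mathcal{A}/\mathcal{Y}_\Z})^{\otimes 2}\otimes\det(\bm{H}_\dR)|_{\mathcal{Y}_\Z}$ agree as metrized line bundles away from $\Sigma_{\mathrm{bad}}$.

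The main obstacle is exactly this first refinement — controlling the integral structure of the Kuga-Satake family along the big CM cycle, uniformly at all primes outside $\Sigma_{\mathrm{bad}}$ — which forces a prime-by-prime analysis of the two local situations singled out in Definition \ref{def:bad primes} (according as $\mathfrak{p}\subset\co_F$ is unramified or ramified in $E$). Once that local comparison is in hand, collecting the error terms into a $\Q$-linear combination of $\{\log p:p\in\Sigma_{\mathrm{bad}}\}$ is routine; a complete treatment is given in \cite{AGHMP-2}.
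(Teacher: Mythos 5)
Your proposal is correct and follows essentially the same route as the paper. For the $\circeq$ statement you reproduce the paper's chain: pull back $\widehat{\taut}$ to $\mathcal{Y}_\Z$ and reduce to archimedean contributions, apply Theorem \ref{thm:modular height} with $n=2d-2$, convert the Kuga-Satake Faltings heights via Proposition \ref{prop:KScm} (using additivity of $h^\Falt$ under products and stability under isogeny modulo $\circeq$), then finish with Corollary \ref{cor:reflex height} and the arithmetic $2^{-(2d-2)}\cdot 2^d\cdot\tfrac{1}{2d}=\tfrac{1}{d\cdot 2^{d-1}}$. For the refinement to $\Sigma_{\mathrm{bad}}$, both you and the paper defer to \cite{AGHMP-2}; the one difference is that the paper's citation goes through an auxiliary stack $\mathcal{Y}_0$ and metrized bundle $\widehat{\taut}_0$ (Theorem 9.4.2 and Proposition 9.5.1 of that reference), whereas your sketch posits an integral refinement of Proposition \ref{prop:KScm} and of the isomorphism in Proposition \ref{prop:bundles} — a plausible but somewhat different-looking roadmap for what is ultimately the same deferred content.
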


\begin{proof}
We will not give a complete proof of this result.  
Instead, we will first prove that (\ref{taut computes average}) holds up to a $\Q$-linear combination of $\{\log(p) : p \mbox{ prime}\}$, and then 
explain how to extract the stronger statement from \cite{AGHMP-2}.

Recalling the notation $\circeq$ of the introduction, 
if we choose any global section $s$ of $\taut|_{\mathcal{Y}_\Z}$ then, directly from the definition of (\ref{big cm functional}),
\[
 \frac{    [  \widehat{\taut}   :    \mathcal{Y}_\Z   ]   }{  \deg_\C(\mathcal{Y}_\Z)   } 
 \circeq 
   \frac{ - 1   }{  \deg_\C(\mathcal{Y}_\Z)   } 
  \sum_{ y\in \mathcal{Y}_\Z(\C) }  \frac{  \log || s_y|| }{ |\Aut(y)| }.
 \]
Using Theorem \ref{thm:modular height}, one can rewrite this as  
 \begin{align*}
  \frac{    [  \widehat{\taut}   :    \mathcal{Y}_\Z   ]   }{  \deg_\C(\mathcal{Y}_\Z)   }  
& \circeq
  \frac{ - 1   }{  \deg_\C(\mathcal{Y}_\Z)   }   
   \sum_{ y\in \mathcal{Y}_\Z(\kk) }    \frac{  1 }{ |\Aut(y)| }
  \sum_{ \sigma : \kk \to \C}   \frac{\log || s_{y^\sigma} ||  }{ [ \kk : \Q]  } \\
&  \circeq  
  \frac{ 1   }{  \deg_\C(\mathcal{Y}_\Z)   }   
     \sum_{ y\in \mathcal{Y}_\Z(\kk) } 
    \frac{h^\Falt ( A_y) }{ 2^{2d-2}   |\Aut(y)|   }    +   \log(2\pi) 
 \end{align*}
for any number field $\kk \subset \C$ Galois over $\Q$ and large enough that $\mathcal{Y}_\Z(\kk) = \mathcal{Y}_\Z(\C)$.
Now combine this with Proposition \ref{prop:KScm} and  Corollary \ref{cor:reflex height}, which  imply
\[
   \frac{1}{ 2^{2d-2} }  \cdot h^\Falt ( A_y)  
  \circeq 
   \frac{1}{ 2^{d-2} }  \cdot h^\Falt_{(E^\sharp, \Phi^\sharp)}   
    =
   \frac{1}{ d \cdot 2^{d-1}   }  \cdot    \sum_{ \Phi } h^\Falt_{( E,\Phi) } .
\]

We explain how to extract Theorem \ref{thm:taut to faltings} from the results of \cite{AGHMP-2}.
In   \cite[\S 9]{AGHMP-2} there is another stack $\mathcal{Y}_0$, regular and finite  flat over $\iota(\co_E)$, equipped with a finite morphism $\mathcal{Y} \to \mathcal{Y}_0$.  On this stack there is a metrized line bundle $\widehat{\taut}_0$.   Theorem 9.4.2 of \cite{AGHMP-2} asserts that
\[
 \frac{    \widehat{\deg}(   \widehat{\taut}_0 ) }{  \deg_\C(\mathcal{Y}_{0\Z})   }  +\frac{1}{2d} \cdot \log|D_F|  = \frac{1}{ d \cdot 2^{d-1} }\sum_\Phi h^\Falt_{(E,\Phi)}  + \log(2\pi)
\]
where $\mathcal{Y}_{0\Z}$ is $\mathcal{Y}_0$  regarded as a stack over $\Z$, and 
\[
\widehat{\deg} : \widehat{\mathrm{CH}}^1( \mathcal{Y}_{0\Z} ) \to \R
\]
is the arithmetic degree of \S \ref{ss:arakelov}.
It is shown in the proof of Proposition 9.5.1 of  \cite{AGHMP-2}  that 
\[
\frac{  2d\cdot   [  \widehat{\taut}   :    \mathcal{Y}_\Z   ]   }{  \deg_\C(\mathcal{Y}_\Z)   }  
 =   \frac{  2d\cdot\widehat{\deg} (\widehat{\taut}_0) }{ \deg_\C( \mathcal{Y}_{0\Z})  } +   \log|D_F| 
\]
 holds up to a $\Q$-linear combination of $\{ \log(p) :  p \in \Sigma_\mathrm{bad} \}$.
 Note that the factors  $2d/\deg_\C(\cdot)$ were mistakenly omitted from equation (9.5.1) of 
 \cite{AGHMP-2}.
\end{proof}


\section{The arithmetic intersection formula}
\label{s:special to eisenstein}


We keep the data of \S \ref{s:big cm}.  
In particular,  $E$ is a CM field of degree $2d>2$ with maximal totally real subfield $F$.

Recall that we have fixed an  embedding $\iota_0 : F \to \R$, and an extension of it to a complex embedding of $E$.
We chose an element $\xi\in F^\times$ that is negative at $\iota_0$, and positive at all other archimedean places, and  constructed from it a quadratic space $(V,Q)$ of signature $(2d-2,2)$.
A choice of maximal lattice $L\subset V$ then determined an orthogonal Shimura variety $\mathcal{M}$ endowed with a metrized line bundle $\widehat{\taut}$  and a big CM cycle 
\[
\mathcal{Y}_\Z \to \mathcal{M}.
\]

We saw  (Theorem \ref{thm:taut to faltings}) that the arithmetic intersection $[\widehat{\taut} : \mathcal{Y}_\Z ]$ is essentially the averaged Faltings height appearing in Theorem \ref{thm:average colmez}.
On the other hand, we have seen (Corollary \ref{cor:taut to special})  that the theory of Borcherds products relates  the metrized line bundle $\widehat{\taut}$ to the arithmetic divisor $\widehat{\mathcal{Z}}(f)$, where $f$ is a suitable (vector-valued) weakly holomorphic form  on the complex upper half-plane.

To prove  Theorem \ref{thm:average colmez}, it remains to  relate the arithmetic intersection $[\widehat{\mathcal{Z}}(f) : \mathcal{Y}_\Z]$ to the logarithmic derivative at $s=0$ of $L(s,\chi)$.  
This will be done by relating both quantities to the constant term of an Eisenstein series.  
These calculations are the technical core of \cite{AGHMP-2}.  

At this point in the exposition we abandon any pretense of providing complete proofs.
We  hope only to highlight to the reader some of the  main ideas, and provide references to the appropriate places in  \cite{AGHMP-2} where proofs can be found.


\subsection{An incoherent Eisenstein series}


We now recall from the work of Bruinier-Kudla-Yang \cite{BKY} a very particular Eisenstein series on the adelic group $\SL_2(\A_F)$.
We will follow the exposition of \S 6.1 and \S 6.2  of  \cite{AGHMP-2}, although everything we say can be found in  \cite{KuAnnals} in greater generality.

Recall that our initial data in the construction of the big CM cycle 
\[
\mathcal{Y}_\Z \to \mathcal{M}
\]
 is a quadratic space $(\mathscr{V},\mathscr{Q})$ over $F$ of rank two, which is negative definite at $\iota_0 : F \to \R$, but positive definite at all other archimedean places.  
 Define a rank two quadratic space 
$(\mathscr{C},\mathscr{Q})$ over the adele ring $\A_F$ by declaring that $\mathscr{C}$ is positive definite at \emph{every} archimedean place of $F$, and that $\mathscr{C}_\mathfrak{p} \iso \mathscr{V}_\mathfrak{p}$ as $F_\mathfrak{p}$ quadratic spaces for every finite place $\mathfrak{p}$ of $F$.  
Thus $\mathscr{C}$ and $\mathscr{V}\otimes_F\A_F$ are isomorphic everywhere locally except at the archimedean place $\iota_0$.

\begin{remark}
In the terminology of \cite{KuAnnals}, the quadratic space $\mathscr{C}$ over $\A_F$ is \emph{incoherent} in the sense that it does not arise as the adelization of any quadratic space over $F$.     
\end{remark}

Let $\mathscr{C}_\infty$ be the product of the archimedean components of $\mathscr{C}$, and identify the finite part of $\mathscr{C}$ with $\widehat{\mathscr{V}} = \mathscr{V} \otimes_F \widehat{F}$, so that 
\[
\mathscr{C}\iso \mathscr{C}_\infty\times  \widehat{ \mathscr{V} }.
\]
For each $\mu \in L^\vee / L$ the space  
\[
S(\mathscr{C}) \iso S(\mathscr{C}_\infty) \otimes S( \widehat{ \mathscr{V} } )
\]
of Schwartz functions on $\mathscr{C}$ contains a distinguished element $\varphi_\infty^{\bm{1}} \otimes \varphi_\mu$.
The  archimedean component 
$
\varphi^{\bm{1}}_\infty \in S(\mathscr{C}_\infty)
$
is defined as the product over all archimedean places $v$ of $F$ of the Gaussian distributions
\[
\varphi^{\bm{1}}_v( x) = e^{-2\pi \mathscr{Q}_v(x) } \in S(\mathscr{C}_v),
\]
where $\mathscr{Q}_v$ is the quadratic form on $\mathscr{C}_v$.  The finite component 
$
\varphi_\mu \in S(\widehat{\mathscr{V}})
$
is the characteristic function of the compact open subset
$
\mu + \widehat{L} \subset \widehat{\mathscr{V}}.
$

The group $\SL_2(\A_F)$ acts\footnote{There is no need to work with the metaplectic group, as $\mathrm{rank}_{\A_F}( \mathscr{C})=2$ is even.} on $S(\mathscr{C})$ via the Weil representation $\weil_\mathscr{C}$.  Using this representation to translate the Schwartz function $\varphi_\infty^{\bm{1}} \otimes \varphi_\mu$,   we define
\begin{equation}\label{standard section}
\Phi(g,s,\varphi_\mu) = \weil_\mathscr{C}(g) (\varphi^{\bm{1}}_\infty\otimes\varphi_\mu )(0)  \cdot | a |^s
\end{equation}
for $g\in \SL_2(\A_F)$,  $s\in \C$, and $\mu\in L^\vee /L$.
Here $|a|^s$ is defined by factoring
\[
g = \left(\begin{matrix} 1 & b \\  0 & 1   \end{matrix}  \right) \left(\begin{matrix} a & 0 \\  0 & a^{-1}   \end{matrix}  \right)  k
\]
with $k\in \SL_2(\widehat{\co}_F)$.

The function (\ref{standard section})  satisfies the transformation law
\[
\Phi\left(\left( \begin{matrix} a &  b \\ & a^{-1} \end{matrix} \right) g , s ,\varphi_\mu \right)  = \chi( a) |a|^{s+1} \Phi(g,s, \varphi_\mu)
\]
for all $a\in \A_F^\times$ and $b\in \A_F$.  In other words, it lies in the space $I( s, \chi )$ of the representation obtained by inducing the character 
$\chi  |\cdot |^s$ from the standard Borel subgroup of $\SL_2(\A_F)$.  Thus we may form the Eisenstein series
\[
E(g,s,\varphi_\mu) = \sum_{   \gamma \in B(F) \backslash \SL_2(F)  } \Phi(\gamma g ,s, \varphi_\mu), 
\]
where $B\subset \SL_2$ is the group of upper triangular matrices.

Our ordering of the real embeddings $\iota_0,\ldots,\iota_{d-1}  : F \to \R$ fixes an isomorphism $F\otimes_\Q \C \iso \C^d$, and we define  
\[
\mathcal{H}_F \subset F \otimes_\Q \C
\]
to be the preimage of $\mathcal{H}^d\subset \C^d$ under this isomorphism.

We now de-adelize the automorphic form $E(g,s,\varphi_\mu)$ to obtain a classical Hilbert modular Eisenstein series 
in the variable  $\vec{\tau} \in \mathcal{H}_F$.   Writing $\vec{\tau} = \vec{u} + i \vec{v}$ with $\vec{u},\vec{v} \in F \otimes_\Q \R$ and $\vec{v}$ totally positive,  we define
\[
g_{ \vec{\tau} } = \left( \begin{matrix}  1 & \vec{u} \\ & 1 \end{matrix} \right) \left( \begin{matrix}   \sqrt{ \vec{v} }  & \\ &  1/ \sqrt{ \vec{v}}   \end{matrix} \right)  \in \SL_2( F\otimes_\Q\R),
\]
and view $g_{ \vec{\tau} }  \in \SL_2(\A_F)$ with trivial non-archimedean components.  
The function  
\[
E(\vec{\tau} , s , \varphi_\mu) =  \frac{1}{ \sqrt{ \mathrm{Norm}_{F/\Q} (\vec{v}) } } \cdot  E(g_{\vec{\tau}} ,s,\varphi_\mu)
\]
is a nonholomorphic Hilbert modular form of parallel weight $1$, and satisfies a functional equation in $s\mapsto -s$.
Moreover, this Eisenstein series vanishes identically at the center $s=0$ of the functional equation.

\begin{remark}
  The claim about the weight of $E(\vec{\tau} , s , \varphi_\mu)$  follows from the particular choice of Schwartz function $\varphi_\infty^{\bm{1}}$, and is the reason for the otherwise mysterious superscript $\bm{1}$.  The claim about the vanishing at $s=0$ follows from the incoherence of the quadratic space $\mathscr{C}$.
\end{remark}

Consider the central derivative
\[
E'(\vec{\tau} , 0 , \varphi_\mu) = \frac{d}{ds} E(\vec{\tau} , s , \varphi_\mu) \big|_{s=0} .
\]
Like any Hilbert modular form it admits a $q$-expansion, but because it is nonholomorphic the coefficients depend on the imaginary part \[\vec{v} \in F\otimes_\Q\R\]  of $\vec{\tau}$.
  We will apply what one might call the lazy man's holomorphic projection to this $q$-expansion, and simply throw away those parts of the coefficients that depend on $\vec{v}$.   

To make this more precise, we first expand
\[
E'(\vec{\tau} , 0 , \varphi_\mu) =    \frac{1}{ \Lambda ( 0, \chi ) }    \sum_{  \alpha  \in F }  a_F ( \alpha , \vec{v} ,  \mu)   \cdot q^\alpha  ,
\]
where 
\[
q^\alpha = e^{2\pi i \mathrm{Trace}_{F/\Q} ( \alpha \vec{\tau} ) }
\]
and $\mathrm{Trace}_{F/\Q}  : F \otimes_\Q \C \to \C$ is the usual trace.

For any totally positive $\alpha\in F$,  the set of places of $F$ at which $\mathscr{V}$ does not represent $\alpha$ is finite of even cardinality.  
As $\mathscr{V}$ is negative definite at one archimedean place, and positive definite at the others, it follows that the set of  nonarchimedean places
\begin{equation}\label{diff set}
\mathrm{Diff}(\alpha) = \{ \mbox{primes }\mathfrak{p} \subset \co_F : \mathscr{V}_\mathfrak{p} \mbox{ does not represent } \alpha\}
\end{equation}
is finite with odd cardinality.  In particular it is nonempty.
 Moreover, it is not hard to see that every $\mathfrak{p} \in \mathrm{Diff}(\alpha) $ is either inert or ramified in $E$.

\begin{proposition}\label{prop:eisenstein coefficients}
Let $F_+\subset F$ be the subset of totally positive elements.
If $\alpha \in F_+$ then 
\[
a_F (\alpha ,  \mu) \define a_F ( \alpha , \vec{v} , \varphi_\mu)
\]
is independent of $\vec{v}$.  Moreover,
\begin{enumerate}
\item
if $|\mathrm{Diff}(\alpha)|>1$ then $a_F(\alpha,\mu)=0$,
\item
if $\mathrm{Diff}(\alpha)=\{ \mathfrak{p}\}$ then 
\[
\frac{ a_F(\alpha,\mu)} { \Lambda(\chi,0) } \in \Q\cdot \log(\mathrm{N}(\mathfrak{p}) ) .
\]
\end{enumerate}
As for the $\alpha=0$ term,  there is a constant $a_F( 0, \mu  )\in \C$ such that 
\[
\frac{ a_F ( 0  , \vec{v} , \mu) }{  \Lambda(0,\chi) } 
 =  \frac{ a_F(0 , \mu) }{ \Lambda(0,\chi ) }  +  \varphi_\mu(0)  \log  \big( \mathrm{Norm}_{F/\Q}( \vec{v}) \big).
\]
This constant satisfies 
\[
\frac{a_F(0,\mu) }{ \Lambda(0,\chi) }  =
\begin{cases}
- 2  \frac{  L'(0,\chi)  }{ L(0,\chi) }  - \log \left| \frac{  D_{E}  }{  D_{F} }\right|  +d  \cdot  \log(4\pi e^\gamma)  & \mbox{if }\mu=0 \\
0 & \mbox{if }\mu\neq 0
\end{cases}
\]
 up to a $\Q$-linear combination of $\{ \log(p) :  p \in \Sigma_\mathrm{bad} \}$, where 
  $\Sigma_\mathrm{bad}$ is the set of bad primes of Definition \ref{def:bad primes}.
\end{proposition}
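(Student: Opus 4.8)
The plan is to compute the Fourier expansion of the adelic Eisenstein series $E(g,s,\varphi_\mu)$ by the usual unfolding, which expresses each coefficient as a product of local Whittaker functions, one for each place of $F$; this is the calculation of \cite[\S 6.2]{AGHMP-2}, following \cite{KuAnnals} and \cite{BKY}. For a totally positive $\alpha \in F_+$, the local factor at a finite prime $\mathfrak{p}$, evaluated at $s=0$, is a nonzero multiple of the local representation density of $\alpha$ by $\mathscr{V}_\mathfrak{p}$, and therefore vanishes precisely when $\mathfrak{p} \in \mathrm{Diff}(\alpha)$, while at every archimedean place the form $\mathscr{C}_v$ is positive definite (hence represents $\alpha$) and the corresponding Whittaker function is a nonzero multiple of a Gamma factor. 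Thus the $\alpha$-th coefficient of $E(g,s,\varphi_\mu)$ vanishes at $s=0$ to order at least $|\mathrm{Diff}(\alpha)|$. Applying $\tfrac{d}{ds}\big|_{s=0}$: if $|\mathrm{Diff}(\alpha)| > 1$ the first derivative still vanishes, which is (1); if $\mathrm{Diff}(\alpha) = \{\mathfrak{p}\}$, the derivative picks out the derivative at $s=0$ of the single vanishing local factor at $\mathfrak{p}$ times the values at $s=0$ of all the remaining factors, and since $\mathfrak{p}$ is inert or ramified in $E$ that local factor is a rational function of $\mathrm{N}(\mathfrak{p})^{-s}$, so its derivative at $s=0$ is a rational multiple of $\log \mathrm{N}(\mathfrak{p})$; the remaining finite values are rational local densities, and the archimedean Gamma factors together with the normalization $1/\Lambda(0,\chi)$ leave a rational number, giving (2).

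The $\alpha = 0$ term is the heart of the matter. Write the constant term of $E(g,s,\varphi_\mu)$ as $\Phi(g,s,\varphi_\mu) + [M(s)\Phi](g,s,\varphi_\mu)$, where $M(s) = \prod_v M_v(s)$ is the standard intertwining operator on $I(s,\chi)$. Incoherence of $\mathscr{C}$ forces the whole Eisenstein series, hence this sum, to vanish at $s=0$, so the derivative of the constant term at $s=0$ has just two sources. The first, $\partial_s\Phi(g_{\vec\tau},s,\varphi_\mu)\big|_{s=0}$, comes from differentiating the factor $|a|^s$, which at $g_{\vec\tau}$ equals $\mathrm{Norm}_{F/\Q}(\vec v)^{s/2}$; after de-adelizing and dividing by $\Lambda(0,\chi)$ this produces exactly the elementary term $\varphi_\mu(0)\log\mathrm{Norm}_{F/\Q}(\vec v)$ together with a constant, and $\varphi_\mu(0)$ equals $1$ if $\mu = 0$ and $0$ otherwise, which is the source of the case distinction. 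The second source, $\partial_s[M(s)\Phi]\big|_{s=0}$, is evaluated by the Gindikin--Karpelevich formula: the ``numerical'' part of $M(s)$ factors through the places of $F$, and because $\chi$ is the quadratic character of $E/F$ the product over all but finitely many places reassembles into the completed $L$-function ratio attached to $\chi$. Differentiating and collecting, the good finite places contribute the logarithmic derivative of $L(s,\chi)$ together with a conductor term, the archimedean places contribute the $\Gamma_\R(s+1)^d$ piece of $\Lambda(s,\chi)$ (coming from the Gaussian Schwartz component $\varphi^{\bm{1}}_\infty$), and one finds
\[
\frac{a_F(0,0)}{\Lambda(0,\chi)} = -\,2\,\frac{\Lambda'(0,\chi)}{\Lambda(0,\chi)}
\]
up to a $\Q$-linear combination of $\{\log p : p \in \Sigma_\mathrm{bad}\}$, while $a_F(0,\mu)/\Lambda(0,\chi)$ lies in that same $\Q$-span when $\mu \neq 0$. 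Substituting the identity
\[
\frac{\Lambda'(0,\chi)}{\Lambda(0,\chi)} = \frac{L'(0,\chi)}{L(0,\chi)} + \frac12\log\left|\frac{D_E}{D_F}\right| - \frac{d\log(4\pi e^\gamma)}{2},
\]
recalled in \S\ref{s:colmez} as a consequence of the functional equation of $\Lambda(s,\chi)$, yields the stated formula.

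The main obstacle is the bookkeeping in this last step: one must keep precise track of how the derivative of the global intertwining operator, the Gamma factors coming from $\varphi^{\bm{1}}_\infty$, and the normalizing factor $1/\Lambda(0,\chi)$ interact, and one must verify that at every place outside $\Sigma_\mathrm{bad}$ — where $L_\mathfrak{p}$ is self-dual, or satisfies the mild ramified condition of Definition \ref{def:bad primes} — the local computation is explicit enough to contribute nothing beyond what is already packaged in $\Lambda(s,\chi)$. At the finitely many primes of $\Sigma_\mathrm{bad}$ the relevant local Whittaker functions and their derivatives are not evaluated in closed form, which is exactly why the conclusion holds only up to a $\Q$-linear combination of $\{\log p : p \in \Sigma_\mathrm{bad}\}$. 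For the details of the local computations we refer to \cite{BKY} and \cite[\S 6]{AGHMP-2}.
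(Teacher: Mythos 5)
Your sketch is correct and matches the paper's approach, which in the actual text is a pure citation: the claims for $\alpha \in F_+$ are attributed to \cite[Proposition 4.6]{BKY}, and the constant-term claims to \cite[Propositions 6.2.3 and 7.8.2]{AGHMP-2}. What you've written is a faithful expository unpacking of what those cited propositions establish: the Euler factorization of Whittaker functionals, the vanishing order governed by $\mathrm{Diff}(\alpha)$, and the decomposition of the constant term as $\Phi + M(s)\Phi$ with the Gindikin--Karpelevich analysis feeding the $-2\Lambda'(0,\chi)/\Lambda(0,\chi)$ into the leading term, finally converted by the functional-equation identity from \S\ref{s:colmez}. The one place your outline is thinner than it could be is the assertion that $a_F(0,\mu)/\Lambda(0,\chi)$ falls into the $\Q$-span of $\{\log p : p\in\Sigma_{\mathrm{bad}}\}$ when $\mu\neq 0$: you correctly note that $\varphi_\mu(0)=0$ kills the $\Phi'$ contribution, but the vanishing (mod bad primes) of the $(M(s)\Phi)'$ contribution is not self-evident from the structure you describe and genuinely requires the local constant-term computations of \cite[Prop.~7.8.2]{AGHMP-2}. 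Since you cite exactly those sources, this is acceptable, but a reader would benefit from flagging that this is where the real local work lies rather than leaving it as an unremarked assertion.
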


\begin{proof}
The  claims concerning $a_F(\alpha,\mu)$ with $\alpha \in F_+$  follow from \cite[Proposition 4.6]{BKY}.
The claims about the constant term follow from  \cite[Proposition 6.2.3]{AGHMP-2} and \cite[Proposition 7.8.2]{AGHMP-2}.
\end{proof}

Now form the formal $q$-expansion
\[
\mathcal{E}_F(\vec{\tau} ,  \mu) =
a_F(0,  \mu) + \sum_{ \alpha\in F_+ } a_F(\alpha,  \mu) \cdot q^\alpha.
\]
 This formal $q$-expansion is the lazy man's holomorphic projection indicated above. We stress that this is not a modular form, and we have no interest in issues of convergence.  
If we  formally restrict $\mathcal{E}_F(\vec{\tau} ,  \mu)$ to the diagonally embedded upper half-plane  $\mathcal{H} \subset \mathcal{H}_F$, the result is a formal $q$-expansion
\[
\mathcal{E} ( \tau , \mu) = \sum_{ m \ge 0} a(m,\mu)  \cdot q^m
\]
where $a(0,\mu) = a_F(0,\mu)$,  and
\begin{equation}\label{hilbert trace}
a( m ,\mu) =  \sum_{ \substack{   \alpha\in F_+ \\ \mathrm{Trace}_{F/\Q}(\alpha) = m      } }    a_F(\alpha , \mu)
\end{equation}
for all rational numbers $m>0$.


\subsection{The arithmetic intersection formula}


As in \S \ref{ss:theta} (with $n=2d-2$), suppose that 
\begin{equation}\label{final input}
f (\tau) = \sum_{ m \gg -\infty} c (m) q^{-m}  \in M^!_{2-  d  }(\weil_L)
\end{equation}
has all $c(m,\mu)\in \Z$, and let 
\[
\widehat{\mathcal{Z}} (f) = ( \mathcal{Z}(f) , \Phi(f) )  \in \widehat{\mathrm{CH}}^1(\mathcal{M})
\]
be the arithmetic divisor of Theorem \ref{thm:green}.  
We wish to  computes its image under the arithmetic intersection 
\[
 [- : \mathcal{Y}_\Z ] : \widehat{\mathrm{CH}}^1(\mathcal{M}) \to \R.
\]

\begin{lemma}
For any rational number $m>0$ and any $\mu \in L^\vee / L$,  the fiber product
\[
\mathcal{Z}(m,\mu) \cap \mathcal{Y}_\Z \define \mathcal{Z}(m,\mu) \times_{\mathcal{M}} \mathcal{Y}_\Z
\]
has dimension $0$, and is supported in finitely many nonzero characteristics.  
In other words, $\mathcal{Y}_\Z$ intersects all special divisors properly.
\end{lemma}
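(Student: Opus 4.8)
The plan is to check the two assertions --- dimension zero, and support in finitely many nonzero characteristics --- by working out what a point of $\mathcal{Z}(m,\mu)\cap\mathcal{Y}_\Z$ really is, and then invoking the positivity of the quadratic form on special endomorphisms. First I would recall from \S\ref{ss:integral divisors} and Proposition \ref{prop:complex divisor} that a point of $\mathcal{Z}(m,\mu)$ over an $\mathcal{M}$-scheme $S$ is a special quasi-endomorphism $x\in V_\mu(\mathcal{A}_S)$ with $Q(x)=m$, and that over $\mathcal{Y}_\Z$ the Kuga-Satake abelian scheme acquires, by Proposition \ref{prop:KScm}, complex multiplication coming from the torus $T$; concretely, pulling back to $\mathcal{Y}_\Z$ one has $V(\mathcal{A}_{\mathcal{Y}_\Z})\subset\End(\mathcal{A}_{\mathcal{Y}_\Z})$, a module whose rational span is determined by the CM structure. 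The key point is that the space of special (quasi-)endomorphisms of the fiber $\mathcal{A}_y$ at a point $y$ of $\mathcal{Y}_\Z$ is, up to isogeny, the quadratic space $\mathscr{V}$ viewed over $\Q$ via $\mathrm{Trace}_{F/\Q}$, so it has a \emph{definite} $\Q$-quadratic form: at any point of $\mathcal{Y}_\Z(\C)$ the negative two-plane is cut out by $z_0$, and $x\perp z_0$ forces $x$ to lie in the positive-definite complement. Hence over the generic fiber $Y\subset\mathcal{Y}_\Z$ there are only finitely many $x$ with $Q(x)=m$, and in fact for $y$ in characteristic $0$ one cannot have such an $x$ unless the positive-definite lattice $V(\mathcal{A}_y)$ already represents $m$ --- which, because $Y$ has dimension $0$ and only finitely many geometric points, happens for only finitely many pairs $(y,x)$; but each such $x$ then generates an extra special endomorphism forcing extra CM, and one checks this cannot occur on the generic fiber, so $\mathcal{Z}(m,\mu)\cap Y=\emptyset$. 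This gives properness over $\Spec(\Q)$: the intersection is supported in nonzero characteristics.

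Next I would argue finiteness of the set of bad characteristics. For a fixed $m$, the Cartier divisor $\mathcal{Z}(m,\mu)$ on $\mathcal{M}$ is flat and of finite type, and $\mathcal{Y}_\Z\to\mathcal{M}$ is finite flat over $\Spec(\Z)$ of relative dimension $0$; the fiber product $\mathcal{Z}(m,\mu)\cap\mathcal{Y}_\Z$ is therefore a closed substack of $\mathcal{Y}_\Z$, hence quasi-finite over $\Spec(\Z)$, and (being contained in the special fibers) it is supported over a closed subset of $\Spec(\Z)$ not containing the generic point --- i.e. over a finite set of primes, unless it has a one-dimensional component, which is exactly what I must exclude. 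To rule out a vertical component of dimension $1$: such a component would be a whole characteristic-$p$ fiber of $\mathcal{Y}_\Z$ on which $\mathcal{Z}(m,\mu)$ is identically met, meaning every geometric point of $\mathcal{Y}_{\Z,\F_p}$ carries a special endomorphism of norm $m$. Here I would invoke the rigidity of specialness recalled after the definition in \S\ref{ss:self-dual divisors}: on a connected base, a special endomorphism is determined by its restriction to any geometric fiber, and the module $V(\mathcal{A}_S)$ is locally constant in a suitable sense; combined with the fact that $\mathcal{Y}_\Z$ is finite flat over $\Z$ and its geometric fibers over characteristic $0$ carry only the CM special endomorphisms (a rank-$(2d)$-independent $\Q$-space $\mathscr{V}$, definite, not representing $m$ for $m$ outside a density-zero set), a vertical component would force the generic fiber to carry a special endomorphism of norm $m$ --- contradiction. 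So no vertical component exists.

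The main obstacle, I expect, is the last step: showing that the existence of an honest (integral) special endomorphism of norm $m$ on an \emph{entire} characteristic-$p$ fiber of $\mathcal{Y}_\Z$ cannot happen without the same being true generically. In characteristic $p$ the space of special endomorphisms can jump --- indeed the whole point of the deformation-theoretic calculation of $[\mathcal{Z}(m,\mu):\mathcal{Y}_\Z]_{\mathrm{fin}}$ (Theorem \ref{thm:deformation}) is that these intersection lengths are nonzero and genuinely local --- so one must be careful: the claim is only that the intersection is \emph{zero-dimensional}, not empty. The clean way to see this is that $\mathcal{Y}_\Z$ is regular of dimension $1$ and $\mathcal{Z}(m,\mu)$ is an effective Cartier divisor on $\mathcal{M}$ whose pullback to $\mathcal{Y}_\Z$ is therefore an effective Cartier divisor; such a divisor on a regular one-dimensional stack is automatically zero-dimensional (a nonzero ideal sheaf in a Dedekind-type ring has finite-length quotient) unless it is all of $\mathcal{Y}_\Z$, i.e. unless the pullback of the defining section of $\mathcal{Z}(m,\mu)$ vanishes identically on $\mathcal{Y}_\Z$; and that cannot happen because it already does not vanish on the dense generic fiber $Y$, as shown in the first paragraph. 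So really the heart of the matter is the nonemptiness-of-complement statement $\mathcal{Z}(m,\mu)\cap Y=\emptyset$, for which the definiteness of $\mathscr{V}$ over the reflex field is the decisive input, and the rest is the standard fact that a nonzero Cartier divisor pulled back to a regular curve stays a divisor.
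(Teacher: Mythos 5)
The paper's own proof is a one-line citation to Proposition 7.6.1 of \cite{AGHMP-2}, so your proposal, which gives a direct argument, is necessarily taking a different route. Your overall strategy is sound, and your third paragraph isolates the right mechanism: since $\mathcal{Y}_\Z$ is regular of dimension one and flat over $\Z$, the pullback of the effective Cartier divisor underlying $\mathcal{Z}(m,\mu)$ either cuts out a zero-dimensional closed substack (finitely many closed points, hence finitely many residue characteristics) or vanishes identically on a component of $\mathcal{Y}_\Z$, and the latter is ruled out as soon as the intersection with the dense generic fiber is known to be empty. (Because $\mathcal{Z}(m,\mu)\to\mathcal{M}$ is finite, the dimension of the stack-theoretic fiber product agrees with that of the set-theoretic intersection, so this does prove the stated dimension claim.) Your second paragraph is superseded by the third and can be dropped.

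The one place you should tighten is the emptiness of the generic fiber intersection, which is the crux, and where your first paragraph hedges (``there are only finitely many $x$,'' ``forcing extra CM, and one checks this cannot occur''). The clean statement is stronger: for every $y\in\mathcal{Y}_\Z(\C)$ one has $V(A_y)_\Q=0$, not merely that it is a definite lattice that happens not to represent $m$. Indeed, by the description in \S\ref{ss:divisors}, the rational span of the special quasi-endomorphisms at $y=[z,g]$ is $\{v\in V : v\perp P_z\}$, where $P_z\subset V_\R$ is the negative-definite plane. At a point of $\mathcal{Y}_\Z(\C)$ that plane is (a Galois conjugate of) the archimedean component $\mathscr{V}\otimes_{F,\iota_0}\R$ of $V_\R\iso\prod_i\mathscr{V}\otimes_{F,\iota_i}\R$. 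But for any nonzero $v\in V=\mathscr{V}$ the projection to the $\iota_0$-component is $v\otimes_{F,\iota_0}1\neq 0$, so $v$ is never orthogonal to $P_{z_0}$. Hence $V(A_y)_\Q=0$, there is no $x$ with $Q(x)=m>0$, and $Z(m,\mu)(\C)\times_{M(\C)}\mathcal{Y}_\Z(\C)=\emptyset$. With that in hand the rest of your argument goes through. The vague appeal to ``extra CM'' is not needed and, as phrased, does not actually close the gap; what does the work is simply the non-rationality of the negative plane.
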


\begin{proof}
This follows from  \cite[Proposition 7.6.1]{AGHMP-2}, which also gives information about which characteristics can appear in the fiber product.  
\end{proof}

The significance of the lemma is that, as per the discussion of \S \ref{ss:arakelov}, there is a decomposition
\begin{equation}\label{intersection decomp}
 [ \widehat{\mathcal{Z}} (f)  :  \mathcal{Y}_\Z  ] 
 =
  [ \mathcal{Z} (f)  :  \mathcal{Y}_\Z  ]_\mathrm{fin}  +  [ \Phi (f)  :  \mathcal{Y}_\Z  ]_\infty
\end{equation}
as the sum of a  finite part 
\[
 [ \mathcal{Z} (f)  :  \mathcal{Y}_\Z  ]_\mathrm{fin}
 = \sum_{  \substack{   m>0 \\ \mu \in L^\vee/L   }  }     c(-m,\mu) \cdot     [ \mathcal{Z} (m,\mu)   :  \mathcal{Y}_\Z  ]_\mathrm{fin}
\]
and an archimedean part
\[
 [ \Phi (f)  :  \mathcal{Y}_\Z  ]_\infty  = \frac{1}{2} \sum_{ y \in \mathcal{Y}_\Z(\C) } \frac{\Phi(f, y) }{ |\Aut(y) | }.
\]

The following theorem is the most technically difficult part of \cite{AGHMP-2}.

\begin{theorem}\label{thm:deformation}
For any rational number $m>0$ and any $\mu \in L^\vee/L$, the equality 
\[
\frac{ 2d }{  \deg_\C( \mathcal{Y}_\Z)   } \cdot [ \mathcal{Z} (m,\mu)  :  \mathcal{Y}_\Z  ]_\mathrm{fin}
= -  \frac{a(m,\mu)}{ \Lambda(0,\chi) }
\]
holds up to a $\Q$-linear combination of $\{ \log(p) :  p \in \Sigma_\mathrm{bad} \}$.
Here $\Sigma_\mathrm{bad}$ is the set of bad primes of Definition \ref{def:bad primes}. 
\end{theorem}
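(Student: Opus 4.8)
The plan is to prove the identity one rational prime at a time, reducing at each prime $p$ to a comparison between a deformation-theoretic length on the geometric side and a derivative of a local Whittaker integral on the analytic side, in the spirit of the Kudla program and of the computations of Gross--Keating, Kudla--Rapoport--Yang and \cite{BKY, BHKRY}. First I would invoke the proper intersection proved just above: since $\mathcal{Z}(m,\mu)$ meets $\mathcal{Y}_\Z$ in a $0$-dimensional stack supported in nonzero characteristics, the recipe of \S\ref{ss:arakelov} gives
\[
[\mathcal{Z}(m,\mu):\mathcal{Y}_\Z]_{\mathrm{fin}} = \sum_p \log(p) \sum_{y} \frac{\length\big(\co^{\mathrm{et}}_{\mathcal{Z}(m,\mu)\cap\mathcal{Y}_\Z,\,y}\big)}{|\Aut(y)|},
\]
the inner sum running over $y\in(\mathcal{Z}(m,\mu)\cap\mathcal{Y}_\Z)(\F_p^\alg)$, equivalently over pairs consisting of a point of $\mathcal{Y}_\Z(\F_p^\alg)$ and a special endomorphism $x$ of the corresponding Kuga--Satake fiber with $Q(x)=m$ and coset $\mu$. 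By Proposition \ref{prop:KScm} this fiber is, up to isogeny, a power of a CM abelian variety with CM by $E^\sharp$, so --- using the description of special endomorphisms of \S\ref{ss:self-dual divisors}, cut down via (\ref{perp special}), together with the residual $\co_F$-action --- each such $x$ is recorded by a \emph{totally positive} $\alpha\in F$ with $\mathrm{Trace}_{F/\Q}(\alpha)=m$ and prescribed image $\mu$ under $\Lambda^\vee/\Lambda\iso L^\vee/L$. Total positivity holds because, as one checks directly, in characteristic zero the fiber has no nonzero special endomorphisms at all (the $F$-quadratic space $\mathscr{V}$ being anisotropic at $\iota_0$); this is also why such an $x$ forces $p$ to lie below a prime $\mathfrak{p}$ of $F$ in the nonempty odd set $\mathrm{Diff}(\alpha)$. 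Regrouping the inner sum over $\alpha\in F_+$ with $\mathrm{Trace}_{F/\Q}(\alpha)=m$ puts the geometric side into the shape of (\ref{hilbert trace}).

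Second, for a fixed such $\alpha$ and a point $y$ lying over $\mathfrak{p}\in\mathrm{Diff}(\alpha)$, I would compute the local length $\length(\co^{\mathrm{et}}_{\mathcal{Z}(m,\mu)\cap\mathcal{Y}_\Z,y})$ by deformation theory. This length measures the order to which the special endomorphism $x$ lifts along the regular one-dimensional base $\mathcal{Y}_\Z$, whose completed local ring at $y$ is a ramified quotient of the completion of $\co_{E^\sharp}$ at a prime above $p$. Via Grothendieck--Messing and the crystalline characterization of special endomorphisms, this reduces to an explicit computation with the Dieudonn\'e module of $\mathcal{A}_y[p^\infty]$ together with its $C(L)$-action and its Kuga--Satake polarization, and the outcome should be a fixed rational multiple of the derivative, at the place $\mathfrak{p}$, of the local representation density (Whittaker function) attached to the lattice $L_\mathfrak{p}\subset\mathscr{V}_\mathfrak{p}$ and the element $\alpha$. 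At the remaining finite places $\mathfrak{q}\neq\mathfrak{p}$ the ordinary (undifferentiated) local densities count the available $x$'s of the prescribed type, and --- after dividing by the class-number weight $\deg_\C(\mathcal{Y}_\Z)$, which together with the factors $2d$ and $\Lambda(0,\chi)$ is reconciled by the analytic class number formula for the torus $T$ so that both sides of the asserted identity are independent of the level $K_T$ --- the product of these local contributions is exactly $-a_F(\alpha,\mu)/\Lambda(0,\chi)$ by the factorization of Eisenstein coefficients of Bruinier--Kudla--Yang recalled in Proposition \ref{prop:eisenstein coefficients}. Summing over $\alpha\in F_+$ with $\mathrm{Trace}_{F/\Q}(\alpha)=m$ then yields the theorem, with any prime at which a local density fails to admit a clean closed form --- precisely the set $\Sigma_\mathrm{bad}$ of Definition \ref{def:bad primes} --- absorbed into the allowed error.

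The hard part will be the local length computation of the second step at the primes $\mathfrak{p}$ where $L_\mathfrak{p}$ is only maximal rather than self-dual, and above all at primes ramified in $E$. There the $p$-divisible group $\mathcal{A}_y[p^\infty]$ carries a $C(L)$-action together with a polarization that is not self-dual, so identifying its versal deformation and the closed sublocus over which $x$ remains a special endomorphism is delicate; matching the resulting length with the derivative of the more intricate local density of a maximal lattice is exactly what forces the qualifier $\Sigma_\mathrm{bad}$ and occupies the technical heart of \cite{AGHMP-2}. A secondary but genuinely nontrivial bookkeeping burden is the consistent tracking of the coset $\mu$ and the translation between special endomorphisms at the level of $\mathcal{M}$ and at the level of the auxiliary self-dual model $\mathcal{M}^\beef$ through (\ref{KS compare}) and (\ref{perp special}).
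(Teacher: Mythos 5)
Your approach matches the paper's: decompose the finite intersection by totally positive $\alpha\in F$ with $\mathrm{Trace}_{F/\Q}(\alpha)=m$ to mirror (\ref{hilbert trace}), observe that $\mathrm{Diff}(\alpha)$ must be a singleton $\{\mathfrak{p}\}$ and that the contribution is supported at the unique prime above $\mathfrak{p}$, compute the uniform local length $\ell_\mathfrak{p}(\alpha)$ via integral $p$-adic Hodge theory for the deformation of the special endomorphism along the one-dimensional base, and match the resulting weighted point count against $a_F(\alpha,\mu)$ through the Siegel--Weil formula (equivalently, the factorization of Eisenstein coefficients into local densities). The paper organizes the bookkeeping by introducing an auxiliary stack $\mathcal{Z}_F(\alpha,\mu)$ over $\co_E$, which gives a clean decomposition $\mathcal{Z}(m,\mu)\times_\mathcal{M}\mathcal{Y}\iso\bigsqcup_\alpha\mathcal{Z}_F(\alpha,\mu)$ before passing to rational primes, whereas you decompose by $p$ first; this is purely presentational. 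One small imprecision worth tightening: total positivity of $\alpha=\mathscr{Q}(x)$ does not follow merely from the positive definiteness of $Q=\mathrm{Trace}_{F/\Q}\circ\mathscr{Q}$ nor from the emptiness of the generic fiber; it requires knowing that the $F$-quadratic space $V(\mathcal{A}_y)_\Q$ at a characteristic-$p$ point $y$ of $\mathcal{Y}_\Z$ is itself totally positive definite (this is where the nearby space ${}^\mathfrak{p}\mathscr{V}$, obtained from $\mathscr{V}$ by switching signs at $\mathfrak{p}$ and $\iota_0$, enters).
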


\begin{proof}
We  only give the barest hint of the idea.

To any scheme $S$ equipped with a morphism $S\to \mathcal{M}$, we associated in \S \ref{ss:integral divisors} a positive definite quadratic space $V(\mathcal{A}_S)_\Q$ over $\Q$, along with a subset
\[
V_\mu(\mathcal{A}_S) \subset V(\mathcal{A}_S)_\Q.
\]
Let $Q$ denote the quadratic form on $V(\mathcal{A}_S)_\Q$.
If the morphism $S\to \mathcal{M}$ factors through $\mathcal{Y}_\Z \to \mathcal{M}$, then $V(\mathcal{A}_S)_\Q$ carries more structure: it is an $F$-vector space (in fact, an $E$-vector space), and carries an $F$-valued quadratic form $\mathscr{Q}$ satisfying 
\[
Q= \mathrm{Trace}_{F/\Q} \circ \mathscr{Q} .
\]
See \cite[Corollary 5.4.6]{AGHMP-2}.

Suppose $\alpha\in F_+$.
Recalling  that $\mathcal{Y}_\Z$ is just the stack $\mathcal{Y}$ of \S \ref{ss:big cm}, but viewed as a stack over $\Z$ rather than $\iota_0(\co_E) \iso \co_E$, we define a stack over $\mathcal{Y}$ whose functor of points assigns to any morphism $S\to \mathcal{Y}$ the set 
\[
\mathcal{Z}_F(\alpha , \mu)(S) = \{ x\in V_\mu(\mathcal{A}_S) :   \mathscr{Q} (x) = \alpha \}.
\]
By comparing with the definition (\ref{special cycle}) of $\mathcal{Z}(m,\mu)$, we obtain a canonical decomposition
\[
\mathcal{Z}(m,\mu) \times_\mathcal{M} \mathcal{Y} \iso   \bigsqcup_{ \substack{   \alpha\in F_+ \\ \mathrm{Trace}_{F/\Q}(\alpha) = m      } }    \mathcal{Z}_F(\alpha , \mu)
\]
of stacks over $\co_E$,  intended to mirror to the relation (\ref{hilbert trace}).  
Using (\ref{finite decomp by points}), the finite part of the arithmetic intersection becomes
\begin{align*}
[ \mathcal{Z} (m,\mu)  :  \mathcal{Y}_\Z  ]_\mathrm{fin}
& =
\sum_p  \log(p) \left(
\sum_{   y\in (\mathcal{Z}(m,\mu)\cap \mathcal{Y}_\Z)( \F_p^\alg)     }
 \frac{ \mathrm{length}\big( \co^\mathrm{et}_{ ( \mathcal{Z}(m,\mu)\cap \mathcal{Y}_\Z)  ,y  }  \big)  }{ | \Aut(y) | } \right) \\
 & =
 \sum_{\alpha \in F_+}
\sum_{   \substack{ \mathfrak{p} \subset \co_F \\  y\in \mathcal{Z}_F(\alpha,\mu) ( \F_\mathfrak{p}^\alg)     }}
 \frac{ \mathrm{length}\big( \co^\mathrm{et}_{ \mathcal{Z}_F(\alpha,\mu)  ,y  }  \big)  }{ | \Aut(y) | }   \cdot \log( \mathrm{N}(\mathfrak{p}) )   ,
 \end{align*}
 where $\F_\mathfrak{p}^\alg$ is an algebraic closure of $\co_F/\mathfrak{p}$.

Recall the set $\mathrm{Diff}(\alpha)$ primes of $\co_F$ defined by (\ref{diff set}).
If $| \mathrm{Diff}(\alpha)| >1$ then $\mathcal{Z}_F(\alpha,\mu)=\emptyset$.
On the other hand, if $\mathrm{Diff}(\alpha)=\{ \mathfrak{p}\}$  contains a single prime $\mathfrak{p}\subset \co_F$,  then  $\mathcal{Z}_F(\alpha,\mu)$ has dimension $0$, and is supported at the unique prime of $\co_E$ above $\mathfrak{p}$.   See \cite[Proposition 7.6.1]{AGHMP-2}.

Now suppose that $\mathrm{Diff}(\alpha) = \{\mathfrak{p}\}$ where  $\mathfrak{p}$ lies above a rational prime $p\not\in \Sigma_\mathrm{bad}$.   
Every \'etale local ring of $\mathcal{Z}_F(\alpha,\mu)$ has the same length 
\[
\ell_\mathfrak{p}(\alpha) = \mathrm{length}\big(  \co^\mathrm{et}_{ \mathcal{Z}_F(\alpha,\mu)  ,y  }  \big), 
\]
which  is given by an explicit formula in terms of $\ord_\mathfrak{p}(\alpha)$, and satisfies 
\[
 \ell_\mathfrak{p}(\alpha)    \log( \mathrm{N}(\mathfrak{p})  )
  \sum_{  y\in \mathcal{Z}_F(\alpha,\mu) ( \F_\mathfrak{p}^\alg)     }  
 \frac{ 1 }{ | \Aut(y) | }     
 =  -  \frac{  \deg_\C( \mathcal{Y}_\Z)   } { 2d }   \cdot  \frac{a_F(\alpha,\mu)}{ \Lambda(0,\chi) }.
\]
This calculation is carried out in \cite[\S 7]{AGHMP-2}, and lies at the core of the proof of the average Colmez conjecture.  See \cite[Theorem 7.7.4]{AGHMP-2} and  \cite[Theorem 7.8.1]{AGHMP-2}.
 The lengths of the local rings are determined using integral $p$-adic Hodge theory to compute formal deformation spaces of $p$-divisible groups with extra structure.   The summation on the left, which counts geometric points on the stack $\mathcal{Z}_F(\alpha,\mu)$,  can be rewritten so that it counts vectors of  norm $\alpha$ in lattices contained in a  \emph{nearby} positive definite $F$-quadratic space ${}^\mathfrak{p}\mathscr{V}$ obtained by modifying the quadratic space $\mathscr{V}$ at $\mathfrak{p}$ and the unique archimedean place at which it is negative definite.  This essentially realizes the summation  as a  theta series coefficient, and the connection with the Eisenstein series coefficient $a_F(\alpha,\mu)$ comes via the Siegel-Weil formula.  This follows the method for counting points used in \cite{HY} and \cite{Ho12} for Hilbert modular surfaces and unitary Shimura varieties, respectively.

Combining the above with Proposition \ref{prop:eisenstein coefficients}   shows that
\begin{align*}
  [ \mathcal{Z} (m,\mu)  :  \mathcal{Y}_\Z  ]_\mathrm{fin}
 & =
\sum_{   \substack{\alpha \in F_+\\  \mathfrak{p} \subset \co_F \\  y\in \mathcal{Z}_F(\alpha,\mu) ( \F_\mathfrak{p}^\alg)     }}
 \frac{   \ell_\mathfrak{p}(\alpha)    \cdot \log( \mathrm{N}(\mathfrak{p}) )   }{ | \Aut(y) | }    =  -  \frac{  \deg_\C( \mathcal{Y}_\Z)   } { 2d }  \cdot \frac{a_F(\alpha,\mu)}{ \Lambda(0,\chi) } 
 \end{align*}
up to a  linear combination of $\{ \log(p) : p \in \Sigma_\mathrm{bad} \}$, as desired.
%
\end{proof}

The archimedean part of (\ref{intersection decomp}) was computed by Bruinier-Kudla-Yang, following methods of  \cite{BY09} and \cite{Sch}.
Similar  formulas on Hilbert modular surfaces and modular curves appeared earlier in work of Bruinier-Yang \cite{BY06} and Gross-Zagier \cite{GZ}, respectively, but  those calculations used different  methods.

\begin{theorem}[Bruinier-Kudla-Yang \cite{BKY}]\label{thm:BKY}
For any  $f$ as in (\ref{final input}),  the regularized theta lift of \S \ref{ss:theta} satisfies
\[
\frac{d}{  \deg_\C( \mathcal{Y}_\Z )    } \sum_{ y\in \mathcal{Y}_\Z  (\C) }   \frac{ \Phi(f,y) }{ \Aut(y) }
=  \sum_{ \substack{  \mu \in L^\vee/L   \\ m\ge 0  } }  \frac{ a(m,\mu) \cdot c(-m,\mu) } {\Lambda(0,\chi)}.
\]
\end{theorem}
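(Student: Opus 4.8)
The plan is to follow the see-saw (doubling) argument of Bruinier-Kudla-Yang \cite{BKY}, which rests on the techniques of Schofer \cite{Sch} and Bruinier-Yang \cite{BY09}. \textbf{Step 1 (unfolding the CM average).} The value $\Phi(f,y)$ is by construction the constant term at $s=0$ of a regularized integral of $f(\tau)\cdot\theta_L(\tau,y)$ against $v^{-2}\,du\,dv$. Averaging over the finitely many points of $\mathcal{Y}_\Z(\C)$ and interchanging this sum with the truncated integral and the analytic continuation in $s$ reduces the left side to the regularized Rankin-Selberg pairing of $f$ with
\[
\Theta(\tau):=\sum_{y\in\mathcal{Y}_\Z(\C)}\frac{\theta_L(\tau,y)}{|\Aut(y)|}.
\]
The interchange is legitimate since each truncated integral is a finite sum and $\theta_L$ has only polynomial growth at the cusp, so the problem is reduced to identifying $\Theta$ and computing $\int^{\mathrm{reg}}_{\SL_2(\Z)\backslash\mathcal{H}}f(\tau)\cdot\Theta(\tau)\,v^{-2}\,du\,dv$.

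\textbf{Step 2 (the incoherent Siegel-Weil input).} At a point $y=[z_0,g]$ of the CM cycle the negative definite plane $P_{z_0}\subset V_\R$ is exactly the archimedean component $\mathscr{V}\otimes_{F,\iota_0}\R$, so the projections $\mathrm{pr}_{z_0},\mathrm{pr}_{z_0}^\perp$ entering $\theta_L$ are dictated by the $F$-linear structure on $V=\mathscr{V}$. Hence $\Theta(\tau)$ is the restriction, via the diagonal $\mathcal{H}\hookrightarrow\mathcal{H}_F$ and the inclusion $\SL_2(\Z)\hookrightarrow\SL_2(\widehat{\co}_F)$, of a Hilbert-modular theta integral over the torus $T$ of \S\ref{ss:groups}, the binary-form accidental isomorphism realizing $\SO(\mathscr{V})$ as a one-dimensional torus attached to $E/F$. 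Passing to the complex structure carried by $z_0$ flips the signature at $\iota_0$ from $(0,2)$ to $(2,0)$, so this theta integral is attached to the adelic quadratic space $\mathscr{C}$, positive definite at every archimedean place and equal to $\widehat{\mathscr{V}}$ at the finite places, with the weight-one Gaussian $\varphi^{\bm{1}}_\infty$ as archimedean data. By the Siegel-Weil formula a theta integral over $T$ against these data equals the value at $s=0$ of the Eisenstein series $E(\vec\tau,s,\varphi_\mu)$ of Proposition \ref{prop:eisenstein coefficients}; but $\mathscr{C}$ is \emph{incoherent}, so that value vanishes and $\Theta$ is instead governed, at the level of Fourier coefficients, by the central derivative $E'(\vec\tau,0,\varphi_\mu)$, whose $q^\alpha$-coefficient is $a_F(\alpha,\vec v,\mu)/\Lambda(0,\chi)$.

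\textbf{Step 3 (evaluating the pairing by holomorphic projection).} Feeding the Fourier expansion of $f$ and the expansion of $\Theta$ from Step 2 into the regularized integral and unfolding, integration in $\mathrm{Re}(\tau)$ over a period isolates, for each $m\ge 0$ and each $\mu\in L^\vee/L$, the product of the coefficient $c(-m,\mu)$ of $f$ with the matching Fourier coefficient of the diagonal restriction of $E'(\vec\tau,0,\varphi_\mu)$. Because $f$ is holomorphic of weight $2-d$, the non-holomorphic ($\vec v$-dependent) pieces of those coefficients integrate to zero over $\mathrm{Im}(\tau)$ after regularization; this is the holomorphic-projection step, and --- using that the coefficients at totally positive $\alpha$ are already $\vec v$-independent (Proposition \ref{prop:eisenstein coefficients}) --- what survives is precisely the ``lazy man's'' holomorphic projection, i.e. the coefficients $a(m,\mu)$ of \eqref{hilbert trace}, divided by $\Lambda(0,\chi)$. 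Summing the finitely many nonzero terms, dividing by $\deg_\C(\mathcal{Y}_\Z)$, and tracking the factor $d$ versus $2d$ that arises from $\mathcal{Y}_\Z(\C)=\bigsqcup_\sigma\mathcal{Y}^\sigma(\C)$ over the $2d$ complex embeddings of $E$ (two over each real place of $F$) gives the asserted identity.

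\textbf{The main obstacle} is Step 3 together with the analytic bookkeeping behind Steps 1--2: one must justify interchanging ``regularize, sum over the CM cycle, and differentiate in $s$'' so that the output is genuinely the central \emph{derivative} $E'(\vec\tau,0,\varphi_\mu)$ rather than some other spectral piece, and one must execute the holomorphic projection carefully at the constant term $m=0$, where Proposition \ref{prop:eisenstein coefficients} exhibits in the coefficient of $E'(\vec\tau,0)$ an extra $\varphi_\mu(0)\log\mathrm{Norm}_{F/\Q}(\vec v)$ summand that must be shown to collapse, after regularization, to the single finite number $a_F(0,\mu)$ --- the $\log(4\pi e^\gamma)$ it contains being exactly the archimedean correction. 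The see-saw identification of Step 2 and the incoherent Siegel-Weil input are conceptually the heart of the matter, but by now standard; see \cite{BKY}, \cite{Sch}, and \cite{KuAnnals}.
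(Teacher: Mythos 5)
The paper does not prove this theorem; it is quoted directly from Bruinier--Kudla--Yang \cite{BKY}, and the surrounding text (``The archimedean part \dots was computed by Bruinier-Kudla-Yang, following methods of \cite{BY09} and \cite{Sch}'' together with the remark that [loc.~cit.] proves something even more general) makes clear that no proof is being offered here. So there is nothing in the paper to compare your proposal against; what you have written is a sketch of the strategy of \cite{BKY} itself, and your broad outline (average $\theta_L$ over the CM fiber, recognize the result as a theta-type sum governed by the $F$-structure, unfold the regularized Rankin--Selberg pairing against $f$, and identify the output with the diagonal restriction of $E'(\vec\tau,0,\varphi_\mu)$) is in the right spirit.

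That said, Step~2 contains a genuine logical defect as stated. You assert that ``a theta integral over $T$ against these data equals the value at $s=0$ of the Eisenstein series \dots; but $\mathscr{C}$ is incoherent, so that value vanishes and $\Theta$ is instead governed \dots by the central derivative.'' If $\Theta$ were literally a theta integral identified by Siegel--Weil with $E(\vec\tau,0,\varphi_\mu)=0$, then $\Theta\equiv 0$ and the whole left-hand side of the theorem would vanish, which is false. The averaged theta kernel $\Theta$ is a nonzero, \emph{coherent} object: at $\iota_0$ its archimedean datum is the signature-$(0,2)$ (anti-holomorphic) Gaussian determined by $z_0$, not the holomorphic Gaussian $\varphi^{\bm 1}_{\iota_0}$ used to build the incoherent Eisenstein series, and there is no theta function at all attached to $\mathscr{C}$. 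The central derivative enters only after carrying out the regularized $\tau$-integral: the archimedean $(0,2)$ Whittaker integral, after the $s$-regularization and taking the constant term, produces the $s$-derivative at $s=0$ of the incoherent archimedean Whittaker function (which itself vanishes at $s=0$), and Siegel--Weil/local-density arguments are used to match the nonarchimedean orbital sums with the finite Whittaker factors of $E'(\vec\tau,0,\varphi_\mu)$. Your Step~3 is the right follow-up, but the constant-term analysis (cancelling the $\varphi_\mu(0)\log\mathrm{Norm}_{F/\Q}(\vec v)$ term and isolating $a_F(0,\mu)$) is exactly where the $\log(4\pi e^\gamma)$ bookkeeping must be done carefully, and your sketch correctly flags that as the remaining obstacle.
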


\begin{remark}
The actual result proved in  \cite{BKY}  is stronger than Theorem \ref{thm:BKY}. 
Whereas we only need regularized theta lifts of weakly holomorphic forms, the general result proved in [loc.~cit.]   includes 
regularized theta lifts of harmonic weak Maass forms.   This greater generality is not needed for the proof of the averaged Colmez conjecture.
\end{remark}

The weakly holomorphic form (\ref{final input}) may be chosen so that $c(0,0)\neq 0$.
Putting together the finite and archimedean intersection calculations above  yields the following arithmetic intersection formula.

\begin{corollary}\label{cor:intersection}
 For any $f$ as in (\ref{final input}), the equality 
\[
\frac{ 2d }{  c(0,0)   } \cdot \frac{  [ \widehat{\mathcal{Z}} (f)  :  \mathcal{Y}_\Z  ] }{  \deg_\C( \mathcal{Y}_\Z)  }
=
- 2  \frac{  L'(0,\chi)  }{ L(0,\chi) }  - \log \left| \frac{  D_{E}  }{  D_{F} }\right|  +d  \cdot  \log(4\pi e^\gamma) 
\]
holds up to a $\Q$-linear combination of $\{ \log(p) :  p \in \Sigma_\mathrm{bad} \}$.
\end{corollary}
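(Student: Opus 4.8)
The plan is to combine the two computations already established and exploit a cancellation. Starting from the decomposition (\ref{intersection decomp}),
\[
[ \widehat{\mathcal{Z}}(f) : \mathcal{Y}_\Z ] = [ \mathcal{Z}(f) : \mathcal{Y}_\Z ]_\mathrm{fin} + [ \Phi(f) : \mathcal{Y}_\Z ]_\infty,
\]
I would express each term, after multiplication by $2d/\deg_\C(\mathcal{Y}_\Z)$, through the Eisenstein coefficients $a(m,\mu)$ of \S\ref{s:special to eisenstein}; the terms indexed by $m>0$ will then cancel between the finite and archimedean parts, leaving only the constant term, which Proposition \ref{prop:eisenstein coefficients} evaluates in terms of $L'(0,\chi)/L(0,\chi)$.

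\emph{Finite part.} Since $f$ is weakly holomorphic, $\mathcal{Z}(f) = \sum_{m>0,\,\mu} c(-m,\mu)\,\mathcal{Z}(m,\mu)$ is a finite sum of special divisors. Applying Theorem \ref{thm:deformation} to each term gives
\[
\frac{2d}{\deg_\C(\mathcal{Y}_\Z)}\,[ \mathcal{Z}(f) : \mathcal{Y}_\Z ]_\mathrm{fin}
= - \sum_{\substack{m>0\\ \mu\in L^\vee/L}} c(-m,\mu)\,\frac{a(m,\mu)}{\Lambda(0,\chi)}
\]
up to a $\Q$-linear combination of $\{\log(p):p\in\Sigma_\mathrm{bad}\}$.

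\emph{Archimedean part and assembly.} Because $[ \Phi(f) : \mathcal{Y}_\Z ]_\infty = \tfrac12\sum_{y\in\mathcal{Y}_\Z(\C)} \Phi(f,y)/|\Aut(y)|$, Theorem \ref{thm:BKY} yields the exact identity
\[
\frac{2d}{\deg_\C(\mathcal{Y}_\Z)}\,[ \Phi(f) : \mathcal{Y}_\Z ]_\infty
= \sum_{\substack{m\ge 0\\ \mu\in L^\vee/L}} \frac{a(m,\mu)\,c(-m,\mu)}{\Lambda(0,\chi)}.
\]
Adding the two displays, every contribution with $m>0$ cancels, so
\[
\frac{2d}{\deg_\C(\mathcal{Y}_\Z)}\,[ \widehat{\mathcal{Z}}(f) : \mathcal{Y}_\Z ]
= \sum_{\mu\in L^\vee/L} \frac{a(0,\mu)\,c(0,\mu)}{\Lambda(0,\chi)}
\]
up to $\Sigma_\mathrm{bad}$-primes. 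By Proposition \ref{prop:eisenstein coefficients}, $a(0,\mu)/\Lambda(0,\chi)$ is itself a $\Q$-linear combination of $\{\log(p):p\in\Sigma_\mathrm{bad}\}$ when $\mu\ne 0$, so only the $\mu=0$ summand contributes, and that same proposition gives
\[
\frac{a(0,0)}{\Lambda(0,\chi)} = -\,2\,\frac{L'(0,\chi)}{L(0,\chi)} - \log\left|\frac{D_E}{D_F}\right| + d\log(4\pi e^\gamma)
\]
modulo $\Sigma_\mathrm{bad}$-primes. Choosing $f$ with $c(0,0)\ne 0$, as noted above, and dividing by $c(0,0)$ produces the asserted formula.

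The genuine mathematical content lies entirely in the inputs --- Theorem \ref{thm:deformation}, the deformation-theoretic comparison of special-cycle intersection lengths with Eisenstein coefficients that forms the technical core of \cite{AGHMP-2}, together with the Bruinier--Kudla--Yang formula of Theorem \ref{thm:BKY} --- so I do not expect a serious obstacle at this stage. The one point demanding care is the normalization: Theorem \ref{thm:BKY} is stated with the factor $d$ (rather than $2d$) in front of $\sum_y \Phi(f,y)/|\Aut(y)|$, and one must verify that, after scaling both parts by $2d/\deg_\C(\mathcal{Y}_\Z)$, the archimedean coefficient of $a(m,\mu)c(-m,\mu)/\Lambda(0,\chi)$ is exactly $+1$ while the finite one is exactly $-1$, so that the cancellation for $m>0$ is precise and no spurious constant survives.
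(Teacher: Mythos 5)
Your proof is correct and follows the same route as the paper: decompose the arithmetic intersection into finite and archimedean parts via (\ref{intersection decomp}), express both through the Eisenstein coefficients $a(m,\mu)$ using Theorem \ref{thm:deformation} and Theorem \ref{thm:BKY}, cancel the $m>0$ terms, and evaluate the surviving constant term with Proposition \ref{prop:eisenstein coefficients}. The normalization check you flag at the end is sound --- the factor of $\tfrac12$ in $[\Phi(f):\mathcal{Y}_\Z]_\infty$ and the factor $d$ in Theorem \ref{thm:BKY} indeed combine to give coefficient $+1$, matching the $-1$ from Theorem \ref{thm:deformation}, so the cancellation is exact.
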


\begin{proof}
Combining Theorems \ref{thm:deformation} and \ref{thm:BKY} with the decomposition (\ref{intersection decomp}) shows that
\[
2d \cdot \frac{ [ \widehat{\mathcal{Z}} (f)  :  \mathcal{Y}_\Z  ]  }{  \deg_\C( \mathcal{Y}_\Z)   } 
= \sum_{\mu \in L^\vee /L}  \frac{a(0,\mu) \cdot c(0,\mu) }{ \Lambda(0,\chi) }
\]
holds up to a $\Q$-linear combination of $\{ \log(p) :  p \in \Sigma_\mathrm{bad} \}$.
Combining this with Proposition \ref{prop:eisenstein coefficients} completes the proof.
\end{proof}


\subsection{Putting everything together}


We can now prove Theorem \ref{thm:average colmez} of the introduction.

\begin{theorem}
For any CM field $E$ with totally real subfield $F\subset E$ of degree $[F:\Q]=d$, we have the equality
\begin{equation}\label{final form}
\frac{1}{2^d} \sum_\Phi h^\Falt_{(E,\Phi)} =
-  \frac{1}{2} \cdot \frac{ L'(0,\chi)  }{ L(0,\chi) }  -   \frac{1}{4}  \cdot  \log \left| \frac{  D_{E}  }{  D_{F} }\right| 
-  \frac{ d   \log(2\pi)}{ 2 }  ,
\end{equation}
where the summation is over all CM types $\Phi\subset \Hom(E,\C)$.
\end{theorem}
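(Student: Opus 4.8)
The plan is to derive (\ref{final form}) by combining two computations of one arithmetic intersection number against the big CM cycle: the geometric side (Theorem~\ref{thm:taut to faltings}), which ties $[\widehat{\taut}:\mathcal{Y}_\Z]$ to the averaged Faltings height, and the analytic side (Corollary~\ref{cor:intersection}), which ties $[\widehat{\mathcal{Z}}(f):\mathcal{Y}_\Z]$ to $L'(0,\chi)/L(0,\chi)$, glued together by the Borcherds-product relation of Corollary~\ref{cor:taut to special}. First I would fix the data: with $E$ of degree $2d$ and $F\subset E$ as given, choose $\xi\in F^\times$ negative at $\iota_0$ and positive at the remaining real places, form the quadratic space $(V,Q)$ of signature $(2d-2,2)$ from \S\ref{s:big cm}, choose a maximal lattice $L\subset V$, and let $\mathcal{M}$, the big CM cycle $\mathcal{Y}_\Z\to\mathcal{M}$, and $\widehat{\taut}\in\widehat{\Pic}(\mathcal{M})$ be the resulting objects. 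By McGraw's theorem I may fix a weakly holomorphic form $f\in M^!_{2-d}(\weil_L)$ with all Fourier coefficients $c(m,\mu)$ integral and $c(0,0)\neq0$.

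The next step is pure bookkeeping. Corollary~\ref{cor:taut to special} gives $c(0,0)\cdot\widehat{\taut}=\widehat{\mathcal{Z}}(f)-c(0,0)\cdot\bigl(0,\log(4\pi e^\gamma)\bigr)$ in $\widehat{\mathrm{CH}}^1(\mathcal{M})\cong\widehat{\Pic}(\mathcal{M})$. Applying the functional $[-:\mathcal{Y}_\Z]$, using $[(0,\log(4\pi e^\gamma)):\mathcal{Y}_\Z]=\tfrac12\log(4\pi e^\gamma)\cdot\deg_\C(\mathcal{Y}_\Z)$, dividing by $c(0,0)\deg_\C(\mathcal{Y}_\Z)$, and then multiplying by $d/2$, I obtain
\[
\frac{d}{2}\cdot\frac{[\widehat{\taut}:\mathcal{Y}_\Z]}{\deg_\C(\mathcal{Y}_\Z)}=\frac{d}{2c(0,0)}\cdot\frac{[\widehat{\mathcal{Z}}(f):\mathcal{Y}_\Z]}{\deg_\C(\mathcal{Y}_\Z)}-\frac{d}{4}\log(4\pi e^\gamma).
\]
Into the left side I substitute Theorem~\ref{thm:taut to faltings}, obtaining $\tfrac{1}{2^d}\sum_\Phi h^\Falt_{(E,\Phi)}+\tfrac{d}{2}\log(2\pi)$; into the right side I substitute Corollary~\ref{cor:intersection}, obtaining $-\tfrac12\tfrac{L'(0,\chi)}{L(0,\chi)}-\tfrac14\log|D_E/D_F|+\tfrac{d}{4}\log(4\pi e^\gamma)-\tfrac{d}{4}\log(4\pi e^\gamma)$, where the last two terms cancel. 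Equating and rearranging yields (\ref{final form}) \emph{up to a $\Q$-linear combination of $\{\log p:p\in\Sigma_\mathrm{bad}\}$}, where $\Sigma_\mathrm{bad}=\Sigma_\mathrm{bad}(\xi,L)$ is the finite set of bad primes of Definition~\ref{def:bad primes}. This already proves the weak form (\ref{periods}).

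To upgrade this $\Sigma_\mathrm{bad}$-ambiguous identity to exact equality, I would exploit that both sides of (\ref{final form}) depend only on $E$, whereas $\Sigma_\mathrm{bad}$ depends on the auxiliary choices $(\xi,L)$. Write $c$ for the (choice-independent) difference of the two sides; the previous paragraph shows $c\in\sum_{p\in\Sigma_\mathrm{bad}(\xi,L)}\Q\cdot\log p$ for every admissible $(\xi,L)$. Since $\{\log p\}_p$ is $\Q$-linearly independent, $c$ lies in the $\Q$-span of $\log p$ for $p$ in the intersection of all these finite sets; hence it suffices to show that for each prime $p$ one can choose $\xi$ and a maximal lattice $L$, keeping $E$ fixed, so that $p$ is good, i.e.\ $p\notin\Sigma_\mathrm{bad}(\xi,L)$. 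Granting this, the intersection is empty and $c=0$.

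I expect the main obstacle to be exactly this last step: producing, for an arbitrary prime $p$ (including $p=2$), auxiliary data making $p$ good requires a careful local analysis of maximal $\Z_p$-lattices in the $E\otimes\Q_p$-quadratic spaces $\mathscr{V}_\mathfrak{p}$, splitting into the cases where $\mathfrak{p}$ is split, inert, or ramified in $E$ and tracking the extra $\co_{E,\mathfrak{p}}$-stability and near-self-duality conditions of Definition~\ref{def:bad primes}. The other serious input, Theorem~\ref{thm:deformation} — the equality of the finite intersection multiplicities $[\mathcal{Z}(m,\mu):\mathcal{Y}_\Z]_\mathrm{fin}$ with the Eisenstein coefficients $a(m,\mu)$ — I am here taking on faith, as it is already packaged into Corollary~\ref{cor:intersection}; its proof, via deformation theory of $p$-divisible groups with extra structure together with the Siegel--Weil formula, is the technical heart of \cite{AGHMP-2}.
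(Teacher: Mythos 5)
Your proof is correct and takes essentially the same route as the paper: combine Theorem~\ref{thm:taut to faltings}, Corollary~\ref{cor:taut to special}, and Corollary~\ref{cor:intersection} to get the identity modulo $\Q$-linear combinations of $\{\log p : p\in\Sigma_{\mathrm{bad}}\}$, then observe that both sides are independent of the auxiliary data $(\xi,L)$ while $\Sigma_{\mathrm{bad}}$ is not, and conclude by $\Q$-linear independence of $\{\log p\}$. The ``main obstacle'' you flag --- that for each prime $p$ one can choose $\xi$ and a maximal lattice $L$ making $p$ good --- is exactly the remaining input, which the paper handles by citing \cite[Proposition 9.5.2]{AGHMP-2}.
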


\begin{proof}
Recall  the finite set of bad primes $\Sigma_\mathrm{bad}$ of Definition \ref{def:bad primes}.
Theorem \ref{thm:taut to faltings} and  Corollary \ref{cor:taut to special} give us the equalities
\[
\frac{1}{ 2^d   }\sum_\Phi h^\Falt_{(E,\Phi)}  
 =  \frac{d}{2} \cdot \frac{  [  \widehat{\taut}   :    \mathcal{Y}_\Z   ]  }  {  \deg_\C( \mathcal{Y}_\Z )    }     -   \frac{d}{2} \cdot   \log(2\pi)   
\]
and
\[
\frac{d}{2} \cdot   \frac{  [  \widehat{\taut}   :    \mathcal{Y}_\Z  ]  }  {  \deg_\C( \mathcal{Y}_\Z)      }    
 =\frac{d}{2c(0,0)} \cdot  \frac{  [ \widehat{\mathcal{Z}}(f) : \mathcal{Y}_\Z ] }  {   \deg_\C( \mathcal{Y}_\Z)  }  
- \frac{d}{4} \cdot  \log ( 4 \pi  e^\gamma )   ,
\]
up to a $\Q$-linear combination of $\{ \log(p) :  p \in \Sigma_\mathrm{bad} \}$.
Comparing these with Corollary \ref{cor:intersection} shows that the desired equality (\ref{final form})
holds up to a $\Q$-linear combination of $\{ \log(p) :  p \in \Sigma_\mathrm{bad} \}$. 

We now write
\[
\frac{1}{2^d} \sum_\Phi h^\Falt_{(E,\Phi)} =
-  \frac{1}{2} \cdot \frac{ L'(0,\chi)  }{ L(0,\chi) }  -   \frac{1}{4}  \cdot  \log \left| \frac{  D_{E}  }{  D_{F} }\right| 
-  \frac{ d   \log(2\pi)}{ 2 }
+ \sum_p b_E(p) \log(p) 
\]
for   constants $b_E(p) \in  \Q$  satisfying $b_E(p)=0$ whenever $p \not\in \Sigma_\mathrm{bad}$.
By the $\Q$-linear independence of $\{ \log(p) \}$, the constants  $b_E(p)$ are uniquely determined by this relation, and each depends only on $E$ and $p$.

In particular $b_E(p)$ does not depend on the choice of   $\xi \in F^\times$ used to define the quadratic space $(V,Q)$, or on the choice of maximal lattice $L\subset V$. 
 The set $\Sigma_\mathrm{bad}$, however, depends very much on these choices.  
 According to Proposition 9.5.2 of \cite{AGHMP-2}, for any given prime $p$ one can choose the auxiliary data in such a way that $p\not\in \Sigma_\mathrm{bad}$, and hence $b_E(p)=0$.
\end{proof}

The proof above is somewhat simpler than what is presented in \S 9.5 of \cite{AGHMP-2}, where some calculations are carried out after enlarging the Shimura variety $\mathcal{M}$ to some $\mathcal{M}^\beef$ as in \S \ref{ss:general integral}.  The point is that at the time \cite{AGHMP-2} was written, the results of \cite{HMP} were not yet available, and so the equality of divisors of Theorem \ref{thm:borcherds} was only known 
(by work of H\"ormann, as in  Remark \ref{rem:hormann}) 
up to a sum of  divisors supported in  characteristics dividing $[L^\vee : L]$.  
The arguments of \cite{AGHMP-2} were written in such a way that they only require this weaker result.

\bibliographystyle{amsalpha}

\newcommand{\etalchar}[1]{$^{#1}$}
\providecommand{\bysame}{\leavevmode\hbox to3em{\hrulefill}\thinspace}
\providecommand{\MR}{\relax\ifhmode\unskip\space\fi MR }
\providecommand{\MRhref}[2]{%
  \href{http://www.ams.org/mathscinet-getitem?mr=#1}{#2}
}
\providecommand{\href}[2]{#2}

\end{document}